\documentclass[12pt]{article}

\usepackage[latin2,utf8]{inputenc}

\usepackage{amsmath,amssymb,amsthm, amsfonts}
\usepackage{verbatim}
\usepackage{graphicx}
\usepackage{arydshln}
\usepackage{xcolor}

 \setlength{\oddsidemargin}{-0mm}
\setlength{\evensidemargin}{-0mm} \setlength{\topmargin}{-25mm}
\setlength{\textheight}{240mm} 
 \setlength{\textwidth}{166mm}

 \numberwithin{equation}{section}
\numberwithin{figure}{section}
\numberwithin{table}{section}
 
\renewcommand{\div}{\mbox{\rm div}\;\!}

\newcommand{\bign}{\Big\|}
\newcommand{\de}{\partial}
\newcommand{\R}{\mathbb{R}}
\newcommand{\BA}{\mathbb{A}}
\newcommand{\BB}{\mathbb{B}}
\newcommand{\BC}{\mathbb{C}}
\newcommand{\BD}{\mathbb{D}}
\newcommand{\BH}{\mathbb{H}}
\newcommand{\BI}{\mathbb{I}}

\newcommand{\N}{\mathbb{N}}
\newcommand{\BR}{\mathbb{R}}
\newcommand{\BT}{\mathbb{T}}

\newcommand{\CA}{\mathcal{A}}

\newcommand{\CD}{\mathcal{D}}

\newcommand{\CF}{\mathcal{F}}
\newcommand{\cF}{\mathcal{F}}

\newcommand{\CL}{\mathcal{L}}
\newcommand{\CP}{\mathcal{P}}

\newcommand{\CR}{\mathcal{R}}
\newcommand{\CS}{\mathcal{S}}

\newcommand{\ba}{\boldsymbol{a}}

\newcommand{\bff}{\boldsymbol{f}}
\newcommand{\bF}{\boldsymbol{F}}
\newcommand{\bg}{\boldsymbol{g}}
\newcommand{\bG}{\boldsymbol{G}}
\newcommand{\bh}{\boldsymbol{h}}
\newcommand{\bH}{\boldsymbol{H}}

\newcommand{\bn}{\boldsymbol{n}}
\newcommand{\bu}{\boldsymbol{u}}

\newcommand{\bU}{\boldsymbol{U}}
\newcommand{\bv}{\boldsymbol{v}}
\newcommand{\bV}{\boldsymbol{V}}

\newcommand{\bW}{\boldsymbol{W}}
\newcommand{\tbW}{\widetilde \bW}
\newcommand{\bX}{\boldsymbol{X}}

\newcommand{\oq}{\bar q}
\newcommand{\oqq}{\bar q_0}

\newcommand{\fp}{\mathfrak{p}}
\newcommand{\fq}{\mathfrak{q}}

\newcommand{\sfQ}{\mathsf{Q}}
\newcommand{\sfP}{\mathsf{P}}

\newcommand{\pd}{\partial}

\newcommand{\HS}{\mathbb{R}^d_+}

\newcommand{\dB}{\dot{B}}
\newcommand{\dH}{\dot{H}}

\newcommand{\dW}{\dot{W}}
\newcommand{\tW}{\widetilde\bW}

\DeclareMathOperator{\dv}{{div}}

\renewcommand{\d}{\,\mathrm{d}}

\DeclareMathOperator{\DV}{{Div}}

\newcommand{\dtau}{\d \tau}
\newcommand{\Gdiv}{G_\mathrm{div}}

%---------------------------------------------------------------------------------------------------------------
%	document
%---------------------------------------------------------------------------------------------------------------

\newtheorem{thm}{Theorem}
\newtheorem{lem}{Lemma}
\newtheorem{cor}{Corollary}
\newtheorem{rmk}{Remark}

\begin{document}

\title{Free Boundary Problem for inhomogeneous Navier-Stokes equations}
\author{P.B. Mucha$^1$%\thanks{corresponding author}
, T. Piasecki$^1$, Y. Shibata$^2$}
\date{}
\maketitle

\begin{center}
$^1$ Institute of Applied Mathematics and Mechanics, University of Warsaw\\ 
Banacha 2, 02-097 Warsaw, Poland\\
emails: p.mucha@mimuw.edu.pl, t.piasecki@mimuw.edu.pl 
\vskip5mm
$^2$ Department of Mathematics and Research Institute of Science and Engineering,\\ Waseda University,\\ 
Ohkubo
3-4-1, Shinjuku-ku, Tokyo 169-8555, Japan\\
email: yshibata325@gmail.com
\end{center}

%\date{\today}

\begin{abstract}

We study free boundary problems for incompressible inhomogeneous flows governed by the Navier--Stokes equations, focusing on the regularity and global-in-time well-posedness of solutions in critical functional frameworks for small initial data.
We introduce a novel analytical framework for free boundary problems formulated as perturbations of the half-space. Our approach relies on the natural Lagrangian change of coordinates and a detailed analysis of the linearized problem (the Stokes system) in the maximal regularity regime, formulated in the Lebesgue spaces $L_p(0,T; L_q)$, including time-weighted variants. The main difficulty lies in the treatment of boundary terms, for which we apply a new technique based on complex interpolation to control nonlinear terms in fractional Sobolev spaces. This strategy also allows us to handle the case of variable density, which is not easily addressed by approaches based on Besov spaces.

Using this framework and real interpolation techniques, we construct also solutions in the Lorentz class $L_{p,1}(0,T; L_q)$ in time. The method further enables a rigorous study of the stability of equilibrium configurations. In particular, we resolve the problem in two spatial dimensions, where the interplay between geometry and regularity is especially subtle. Beyond these specific applications, the proposed approach provides a powerful tool for broader classes of nonlinear PDEs and further developments in maximal regularity theory.

\end{abstract}

\noindent
{\sc Key words:} Navier-Stokes equations, free boundary problems,  maximal regularity, complex and real interpolations, inhomogeneous incompressible flows, Lorentz spaces.

\newpage

\section{Introduction} 

% Free boundary problems in fluid mechanics represent a class of the most challenging problems in modern mathematics. They require sophisticated developments in transport theory, demand maximal regularity results for parabolic systems, and call upon deep tools from function space theories. 
% The issue is linked to the possibility of description/control the free interface. From that reason in case of high/sharp identification of the boundary, relatively high regularity of solutions is required, too.
% Our focus centers on regular solutions to incompressible Navier-Stokes systems with free boundaries. While this area has been extensively studied, significant challenges persist regarding solution regularity, dimensional dependence, and the well-posedness or ill-posedness of initial value problems. 

Free boundary problems in fluid mechanics stand among the most formidable challenges in contemporary mathematical analysis. Their complexity lies in the necessity to simultaneously track the evolution of a moving interface and the dynamics of the fluid beneath it — a coupling that forces the analyst to marshal the full strength of modern transport theory, to invoke maximal regularity for parabolic systems, and to draw upon the most refined tools available in nonlinear function space theory.
At the heart of the difficulty is the very possibility of describing or controlling the free interface. Any sharp or high-fidelity identification of the boundary compels one to seek solutions of correspondingly high regularity, pushing existing analytical frameworks to their limits.
Within this vast landscape, our investigation focuses on regular solutions to the incompressible Navier–Stokes equations with free boundaries. Despite decades of intensive study, the field continues to harbour profound open questions: the precise mechanisms governing regularity, the subtle dependence on spatial dimension, and the persistent mysteries surrounding the well-posedness — or, in certain regimes, the inherent ill-posedness — of associated initial value problems.

Our main interest in this paper is the following free boundary problem describing the flow of inhomogenous, incompressible fluid by the Navier-Stokes equations
\begin{equation}
\label{inhomo-eq-original}
\left\{\begin{aligned}
\pd_t \rho + \bV \cdot \nabla \rho & =0 & \quad & \text{in $\Omega (t) \times (0,T)$}, \\
\rho( \pd_t \bV + (\bV \cdot \nabla) \bV) - \dv \BT(\bV,P) & = 0 & \quad & \text{in $\Omega (t) \times (0,T)$}, \\
\dv \bV & = 0 & \quad & \text{in $\Omega (t) \times (0,T)$}, \\
\BT(\bV,P) \bn & = 0 & \quad & \text{on $\pd \Omega (t) \times (0,T)$}, \\
V_{\pd \Omega (t)} & = \bV \cdot \bn & \quad & \text{on $\pd \Omega (t) \times (0,T)$}, \\
\rho|_{t=0}=\rho_0,\qquad \bV \vert_{t = 0} & = \bU_0& \quad & \text{in $\Omega_0$}, \\
\Omega(0) & = \Omega_0,
\end{aligned}\right.		
\end{equation}
Here, the unknowns are 
the density $\rho=\rho(x,t)$, the velocity field $\bV = \bV (x, t)$, 
the pressure $P = P (x, t)$, and the domain $\Omega = \Omega (t)$,
whereas an initial velocity field $\ba = \ba (x)$ is given.
{\color{black} Furthermore}, the stress tensor is given by 
$\BT(\bV,P)=\mu \BD (\bV) - P \BI$ with $\BD (\bV) = \nabla \bV + [\nabla \bV]^\top$, where $\mu > 0$ 
stands for the viscosity coefficient of the fluid, which we assume to be constant for simplicity, and $\BI$ is the $d \times d$ identity matrix. 
The atmospheric pressure $P_0$ is assumed to satisfy $P_0 \equiv 0$ 
without loss of generality (cf. \cite[$(3.2)$]{SS20}).
The outward unit normal vector to 
$\pd \Omega (t)$ is denoted by $\bn$ and the normal velocity of 
$\pd \Omega (t)$ is denoted by $V_{\pd \Omega (t)}$.
Let us briefly explain the meaning of each equation in \eqref{inhomo-eq-original}. 
The first equation is the continuity equation describing conservation of mass, the second is the momentum equation.  
The third equation is called the dynamic boundary condition and, physically, this equation states that the normal
stress is continuous as one passes through $\pd \Omega (t)$. 
The fourth equation in \eqref{eq-original} implies that 
the free surface is advected with the fluid, 
which is called the kinematic boundary condition. 
In other words, this boundary condition ensures that fluid particles 
do not cross the free surface $\pd \Omega (t)$.

The system is planned to be considered either in the half-space
in dimension $d \geq 2$, or in its small a perturbation $\Omega_0$ determined by a function $h:\R^{d-1} \to \R$ such that
\begin{equation}\label{omega:0}
    \Omega_0 = \{ x=(x',x_d) \in \R^d: x_d > h(x') \}.
\end{equation}
Perturbation in that case means that $h$ will be small in some suitable norm. Precise definition is given in Section \ref{sec:results}, where we formulate our main results.  

We are interested in showing global well-posedness of problem \eqref{inhomo-eq-original} in the framework of regular solutions.
The key idea is to reduce the above problem to the case of the homogeneous setting, where the density is just constant. For this purpose the density needs to be a perturbation of $\rho \equiv 1$, for simplicity. The second requirement is to have simple information about the regularity of the density. As the inhomogeneous Navier-Stokes equations \eqref{inhomo-eq-original} are often used to model so-called polluted fluids, the $L^\infty$ bound seems to be the most natural. Then there is a need of finding a suitable functional setting to realize these needs. We shall remember that the optimal language is given by the Besov spaces approach \cite{DM2012,EPS,OS}, but then there is a problem to find a reasonable set of admissible densities. By \cite{DM2009} we know that we need to work in some positive class of regularity of the density in the multiplication spaces of Besov type $\dot B^0_{d,1}$. Then it makes our set of initial data relatively small, since this class is essentially smaller than $L^\infty$. And here we need to return to the simpler language of the $L^p$ framework, relying on techniques based on the Maximal Regularity theory for parabolic-type systems in the $L_{p}(0,T;L_q(\Omega))$ norms. Then by the real interpolation theory one can generalize the methods on the spaces in the class of Lorentz spaces in time, ie. $L_{p,1}(0,T;L_q(\Omega))$, keeping the important property of possibility of multiplication by $L^\infty$ function. 
Interpolation of classical $L_p-L_q$ maximal regularity estimates allows for the initial velocity to be in the Besov space $B^s_{p,1}$, as observed first in \cite{DMT} (see also \cite{MP}).   
From that viewpoint we show that, in our case, simpler means more general.

% We consider free boundary problems for the Navier--Stokes equations,  
% which describes the motion of homogenenous and nonhomogeneous incompressible viscous fluids, in the $d$-dimensional 
% Euclidean half-space $\BR^d$ with $d \ge 2$.

% The initial spatial domain in consideration is either the half-space $\HS$ or its perturbation, which is assumed to be small in appropriate sense.

Our idea is to concentrate first on the constant density case with $\rho \equiv 1$. Then, for $t > 0$, the equations governing motion of homogeneous incompressible fluid read
\begin{equation}
\label{eq-original}
\left\{\begin{aligned}
\pd_t \bV + (\bV \cdot \nabla) \bV - \dv \BT(\bV,P) & = 0 & \quad & \text{in $\Omega (t) \times (0,T)$}, \\
\dv \bV & = 0 & \quad & \text{in $\Omega (t) \times (0,T)$}, \\
\BT(\bV,P) \bn & = 0 & \quad & \text{on $\pd \Omega (t) \times (0,T)$}, \\
V_{\pd \Omega (t)} & = \bV \cdot \bn & \quad & \text{on $\pd \Omega (t) \times (0,T)$}, \\
\bV \vert_{t = 0} & = \bU_0& \quad & \text{in $\Omega(0)$}, \\
\Omega(0) & = \Omega_0,
\end{aligned}\right.		
\end{equation} 
%System \eqref{eq-original} is called \textit{the free boundary problem for the Navier--Stokes equations}. 
%Here similarly, the unknowns are the velocity field $\bV = \bV (x, t)$, 
%the pressure $P = P (x, t)$, and the domain $\Omega = \Omega (t)$,
%whereas an initial velocity field $\ba = \ba (x)$ is given.
%{\color{black} Furthermore}, the stress tensor is given by 
%$\BT(\bV,P)=\mu \BD (\bV) - P \BI$ with $\BD (\bV) = \nabla \bV + [\nabla \bV]^\top$, where $\mu > 0$ 
%stands for the viscosity coefficient of the fluid and $\BI$ is the $d \times d$ identity matrix. Next,  
%\begin{equation*}
%\DV \bM = \bigg(\sum_{j = 1}^d \pd_j M_{1,j}, \ldots, \sum_{j = 1}^d \pd_j M_{d,j}\bigg)^\top
%\end{equation*}
%for a $d\times d$ matrix-valued function $\bM = (M_{i, j})_{1 \le i, j \le d}$.
%In addition, we use the notation
%$\BR_+ = (0, \infty)$, and 

This is a suitable place to present the historical development of the problem. We begin with the fundamental results in $L_2-L_2$ setting due to Beale (\cite{B81}, \cite{B84}) and Solonnikov (\cite{Sol77}, \cite{Sol88}). 
The first global existence results for more complex system describing the flow of compressible fluid (without divergence free condition) has been obtained in \cite{MN} and \cite{VZ}.   

The maximal regularity theory is a powerful tool in the framework of regular solutions. The seminal paper of Weis \cite{Weis} established a relation between so called $\CR$-boundedness property of associated resolvent problem and maximal regularity of time dependent problem.  
If zero is in the resolvent set, we are usually able to obtain even exponential decay for the resolvent problem, as observed in \cite{ES} in context of compressible Navier-Stokes equations in bounded domains (see also \cite{PSZ2} for generalization allowing to investigate a class of reaction-diffusion systems). Another, more direct approach to decay estimates to the inhomogemeous system \eqref{inhomo-eq-original} has been applied recently in \cite{DMP} to prove exponential stability of solutions constructed in \cite{DM2019}. A novel framework for maximal regularity approach to time-periodic problems based on $\CR$-boundedness has been developed recently in \cite{EKS1} and \cite{EKS2}.    

In the halfspace, however, zero is in the spectrum of the resolvent problem, which hinders the possibility of showing exponential decay. 
More direct, but technically demanding methods to show time decay of solutions are then required.  
Ukai \cite{Ukai} was the first to derive explicit solution formula for the Stokes problem in the half space. Later in \cite{CPS} this formula has been extended to case of nonzero external force, and well posedness of solutions with initial velocity in Besov spaces with negative index of regularity was shown. 
Decay estimates for the Stokes problem with surface tension in the half space has been shown in \cite{SaSh2016}, and applied to the nonlinear problem in \cite{SaSh2024}. A consistent framework for mathematical analysis of free boundary problems is described in the monograph \cite{PS_book}.
For results in exterior domains we refer among others to \cite{Iwashita}, \cite{MarSol}, \cite{ShibataExt1}, \cite{ShibataExt2}.

The classical incompressible Stokes and Navier-Stokes problems in half-space are nowadays quite well investigated, which led to many results for the incompressible system \eqref{eq-original}. Kozono \cite{Kozono89} proved the well posedness in class $C^1(L^n)$, where $n$ is the space dimension. An interestig general result showing generation of analytic semigroups on $BUC_{\sigma}(\HS),C_{0,\sigma}(\HS)$ and $L^\infty_\sigma(\HS)$ by the Stokes operator can be found in \cite{DHP}. 
The first $L_p-L_q$ maximal regularity result for the Stokes problem in the half-space was shown in \cite{SS12}, then the local well-posedness for \eqref{eq-original} has been proved by Shibata \cite{SS15}. This result is of particular importance for our work, as we rely on the linear results developed therein.   
In \cite{OS22}, the existence for the free boundary problem without surface tension for the incompressible system has been proved for exterior domain with noncompact boundary in space dimension $d \geq 4$, and in the half-space for $d \geq 3$. The paper focuses on nonlinear estimates, assuming appropriate decay for the linearized problem. For recent developments regarding maximal regularity and decay for two-phase problems with sharp interface we refer to \cite{OS25}, \cite{SSZ} and references therein. 

The maximal regularity theory allowed to develop techniques leading to more advanced results, as seen in the works Pr\"uss, Escher, and Mucha \cite{EPS,EM}.
The focus has not only been on determining for which systems the stability of static solutions can be established, but also on identifying the critical function spaces where such stability holds. This line of inquiry has led to the framework of Besov spaces, which allow for the treatment of solutions in the largest admissible functional setting. However, as noted, this approach introduces certain restrictions, particularly in the presence of density variations. In such cases, techniques developed by Shibata \cite{SS20} and Prüss \cite{DHP} appear to be the most suitable.

The classical maximal regularity theory is restricted to UMD spaces \cite{Weis}, which are in particular reflexive. Therefore, $L_1$ regularity in time is excluded. An important progress has been done independently by several authors in the $L_1$ theory recently. We shall mention here the result by Shibata and Watanabe \cite{SW}, who prove the $L_1(B^s_{q,1})$ regularity, where the Besov space $B^s_{q,1}$ is defined below, and the spatial domain in consideration is the half-space with space dimension $d \geq 2$. The proof is based on resolvent estimates in the above Besov space based on explicit solution formulae. An independent observation has been made in already mentioned paper \cite{DMT}, where the $L_1$ regularity has been obtained via application of Lorentz spaces in time. This new, promising direction is also the subject of the second part of our paper, as outlined above.      

The results of our paper concern the global in time stability of small solutions to systems (\ref{inhomo-eq-original}) and (\ref{eq-original}) with initial domain being close to the half-space with dimension $d\geq 2$. Regularity of initial data is explained in the Besov spaces 
given the main considerations in the spaces of type $L_p(0,T;L_q(\Omega(t))$.
The main result in this setting is Theorem \ref{thm:gwp}. The result can be generalized to the setting of Lorentz spaces 
$L_{p,1}(0,T;L_q(\Omega(t))$, then the part concerning the analysis of nonlinear terms becomes simpler, since we obtain the "sharp"
estimate of the key quantity
\begin{equation}
    \int_0^\infty \|\nabla \bV\|_{\infty} dt.
\end{equation}
Here the main result is stated in Theorem \ref{thm:Lorentz}.  

% Analysis in the critical functional framework generates a need of control of nonlinear terms in a suitable norms. Since we deal with boundary regularity, is it not evident they are well posed.
% It requires usually a careful, subtle analysis, very often very technical. Here we propose a different solution, we created a method on control such quantities based on the complex interpolation \cite{BL}. Thanks to that we obtain an elegant way of control of the nonlinearities in relatively simple way, with many possible application in different problem. Thanks to that 
% method we are able to solve the problem in the two space dimensional case.

\smallskip 

Working in a critical functional framework creates a strong need to control nonlinear terms in suitable norms. When boundary regularity is involved, as in free boundary problems, it is not at all obvious that these nonlinear expressions are well posed. Their treatment usually demands careful and subtle estimates, often highly technical and sensitive to the behaviour of the boundary.
In this work, we take a different and more direct approach. We introduce a method for controlling such nonlinear quantities that relies on the complex interpolation techniques of \cite{BL}. This strategy provides a clean and unified way to handle nonlinearities in critical spaces, offering the necessary bounds while avoiding much of the technical complexity that appears in more classical methods.
The usefulness of the approach goes further: it is flexible, broadly applicable, and can be adapted to many related problems in fluid mechanics. Using this interpolation-based method, we are able to resolve the problem in the two-dimensional case, obtaining regularity and well-posedness in a setting where conventional tools often encounter serious obstacles.

%{\bf The whole paper is organized as follows. ...}

\medskip

Let us conclude this introduction by summarizing the main contributions of the present work:

\smallskip 

– We develop a new analytical framework for free boundary problems formulated as perturbations of the half-space. The approach is robust and flexible, and it also covers the particularly delicate case of two spatial dimensions, where the interaction between geometry and regularity becomes especially subtle.

\smallskip 

– Our method allows us to study the stability of equilibrium configurations in the regime of regular and unique solutions. In particular, we provide a setting that rigorously justifies linearization around equilibrium states and captures their dynamical stability in a transparent and mathematically precise way.

\smallskip 

– We construct solutions in the maximal regularity class $L_{p,1}(0,T;L_q)$, a natural space for describing inhomogeneous incompressible flows with low-regularity initial data. A notable feature of our framework is that it accommodates fluid densities that are only small perturbations of a constant reference state in the $L_\infty$-norm, thereby extending the applicability of the analysis to a broader range of physical situations than previously addressed.

\smallskip 

– A central technical contribution of the paper is a new method for estimating nonlinear terms in fractional Sobolev spaces. The technique, based on complex interpolation, gives a systematic way to control fractional-order quantities through carefully arranged estimates on integer-order derivatives. The resulting bounds are sharp and well aligned with the nonlinear structure of the system, offering a useful tool not only for maximal regularity theory in free boundary problems, but also for other classes of nonlinear PDEs.

% The flow of nonhomogeneous incompressible fluid is described by the system
% \begin{align}
% \label{eq-nonhom}
% \left\{\begin{aligned}
% \vr \pd_t\bV + \vr\bV \cdot \nabla \bV - \DV \BT(\bV,P) & = 0 & \quad & \text{in $\Omega (t)$}, \\
% \pd_t\vr + \DV(\vr\bV) & = 0 & \quad & \text{in $\Omega (t)$}, \\
% \dv \bV & = 0 & \quad & \text{in $\Omega (t)$}, \\
% \BT(\bV,P) \bn & = 0 & \quad & \text{on $\pd \Omega (t)$}, \\
% V_{\pd \Omega (t)} & = \bV \cdot \bn & \quad & \text{on $\pd \Omega (t)$}, \\
% \bV \vert_{t = 0} & = \bU_0& \quad & \text{in $\HS$}, \\
% \Omega(0) & = \Omega_0,
% \end{aligned}\right.		
% \end{align}
% with $\Omega_0$ as in the previous case.
% For brevity, we will refer to \eqref{eq-original} as {\em Incompressible Navier-Stokes System}, and to \eqref{eq-nonhom} as {\em Nonhomogeneous Navier-Stokes System}. 

% We also inverstigate the case of $\Omega(0)$ being a small, sufficiently regular perturbation of $\HS$.  

\subsection{Problem reformulation in Lagrangian coordinates}
It is well-known that the movement of the domain $\Omega (t)$ 
creates numerous mathematical difficulties.
Hence, as usual, we will transform the free boundary problem 
\eqref{eq-original} to a problem with a fixed interface
in order to avoid these difficulties, motivated by the 
pioneering {\color{black}works} due to Beale \cite{B81} and 
Solonnikov \cite{Sol77}.
Since we consider the system \textit{without} taking surface 
tension into account, following the idea due to Solonnikov \cite{Sol88} we may rely on the 
Lagrangian transformation to 
%flatten the free boundary $\pd \Omega (t)$ due to $\pd \Omega (0) = \pd \HS$. 
rewrite the problem to the fixed domain $\Omega_0$.  
More precisely, let $x = \bX(y,t) \in \Omega (t)$ be 
the solution to the Cauchy problem
\begin{equation}
\label{eq-Cauchy-Lagrange}
\frac{\d \bX}{\d t} = \bV (x(y,t), t) \quad \text{for $t > 0$}, \qquad x \vert_{t = 0} = y \in \HS.
\end{equation}
This means that $x = \bX(y,t)$ describes the position of 
fluid particle at time $t > 0$ which was located in 
$y \in \Omega_0$ at $t = 0$. Then the change of coordinates 
$(x, t) \rightsquigarrow (y, t)$ is said to be the 
\textit{Lagrangian transformation}. 
We note that for each $t > 0$ the kinematic condition for 
$\pd \Omega (t)$ (i.e., the fourth equation in \eqref{eq-original}) 
is automatically satisfied under the Lagrangian transformation. 
If $\bV (x, t)$ is  Lipschitz continuous with respect to $x$, 
then \eqref{eq-Cauchy-Lagrange} admits a unique solution given by
\begin{equation} \label{Lag:0}
\bX(y,t) = y + \int_0^t \bV (\bX (y, \tau), \tau) \d \tau, \qquad y \in \overline{\Omega_0}, \; t > 0,
\end{equation}
which represents the particle trajectory, due to the classical Picard-Lindel\"of theorem. 
We now set $\bW (y, t) := \bV (\bX(y,t), t)$ and $\sfQ (y, t) = P (\bX(y,t), t)$,
where $\bW (y, t)$ is so-called the \textit{Lagrangian velocity field}. 
Then we can rewrite \eqref{Lag:0} as 
\begin{equation}
\label{Lagrangian-transformation}
\bX(y, t) = y + \int_0^t \bW (y, \tau) \d \tau, \qquad y \in \overline{\Omega_0}, \; t > 0.
\end{equation}
Clearly, we have
$$
\Omega (t)  = \{x \in \BR^d \mid x = \bX_{\bW} (y, t), \enskip y \in \Omega_0 \}, \quad 
\pd \Omega (t)  = \{x \in \BR^d \mid x = \bX_{\bW} (y, t), \enskip y \in \pd \Omega_0 \}.
$$
Since the Jacobian matrix of the transformation $\bX_{\bW} (y, t)$ is given by
\begin{equation}
\nabla_y \bX_{\bW} (y, t) = \BI + \int_0^t \nabla_y \bW (y, \tau) \d \tau,
\end{equation}
the invertibility of $\bX_{\bW} (y, t)$ in \eqref{Lagrangian-transformation} 
is guaranteed for all $t \ge 0$ if $\bW$ satisfies
\begin{equation}
\label{integral-smallness}
\bigg\lVert \int_0^t \nabla \bW (\,\cdot\,, \tau) \d \tau \bigg\rVert_{L_\infty (\Omega_0)} \ll 1,
\end{equation}
which may be achieved by a Neumann-series argument. 
By virtue of \eqref{integral-smallness}, we may write
\begin{equation}
\label{def-Au}
\BA_{\bW} (y, t) := (\nabla_y \bX_{\bW} (y, t))^{- 1} 
= \sum_{l = 0}^{\infty} \bigg(- \int_0^t \nabla_y \bW (y, \tau) \d \tau \bigg)^l.
\end{equation}

\subsubsection{Reformulation of the incompressible system}
With the above notation, for $0 < T \le \infty$ problem \eqref{eq-original} in Lagrangian
coordinates reads
\begin{align}
\label{eq-fixed}
\left\{\begin{aligned}
\pd_t \bW - \dv (\mu \BD(\bW) - \sfP \BI) & = \bF (\bW) & \quad & \text{in $\Omega_0 \times {\color{black} (0, T)}$}, \\
\dv \bW & = \Gdiv (\bW) = \dv \bG (\bW) & \quad & \text{in $\Omega_0 \times {\color{black} (0, T)}$}, \\
(\mu \BD\bW) - \sfP \BI) \bn_0 & = \BH (\bW) \bn_0 & \quad & \text{on $\pd \Omega_0 \times {\color{black} (0, T)}$}, \\
\bW \vert_{t = 0} & = \bU_0 & \quad & \text{in $\Omega_0$},
\end{aligned}\right.		
\end{align}
where we have set $\bn_0 = (0, \ldots, 0, - 1)$. The right-hand side 
$\bF (\bW)$, $G_\mathrm{div} (\bW)$, $\bG (\bW)$, and $\BH (\bW)$
represent nonlinear terms given as follows:
\begin{equation}
\label{def-nonlinear-terms}
\begin{split}
\bF (\bW) & := 
\bigg(\int^t_0\nabla_y \bW\d\tau\bigg)
\Big(\pd_t \bW - \mu \Delta_y \bW  \Big) 
+ \mu \nabla_y \Big((\BA_{\bW}^\top - \BI) \colon \nabla_y \bW \Big)\\
&+ \mu \bigg(\BI+\int^t_0\nabla_y \bW\d\tau \bigg)\dv_y 
\Big( (\BA_{\bW} \BA_{\bW}^\top - \BI) \nabla_y \bW \Big), \\
G_\mathrm{div} (\bW) & := ( \BI - \BA_{\bW}^\top) \colon \nabla_y \bW, \\
\bG (\bW) & := (\BI - \BA_{\bW}) \bW, \\
\BH (\bW) & := 
\mu \bigg[ \bigg\{\nabla_y \bW + \bigg(\BI+\int^t_0\nabla_y \bW\d\tau \bigg)
[\nabla_y \bW]^\top \BA_{\bW} \bigg\} (\BI-\BA_{\bW}^\top)\\
&+ \bigg(\int_0^t \nabla_y \bW \d \tau \bigg) [\nabla_y \bW]^\top \BA_{\bW} 
+ [\nabla_y \bW]^\top (\BI-\BA_{\bW}) \bigg], \\
\BD_y (\bW) & := \nabla_y \bW + [\nabla_y \bW]^\top.	
\end{split}
\end{equation}
Recall that for $d \times d$ matrices 
$\BA = (A_{j, k})$ and $\BB = (B_{j, k})$, 
we write $\BA \colon \BB = \sum_{j, k}^d A_{j, k} B_{j ,k}$. 
The formulas \eqref{def-nonlinear-terms} have essentially been derived in 
\cite[Subsec.~3.3.2]{SS20}. For the sake of completeness we recall its derivation in Appendix \ref{sec-A}. 
It should be emphasized here that all nonlinear terms given in 
\eqref{def-nonlinear-terms} do not contain the pressure term $\sfQ$,
which is different from the formula derived by Solonnikov \cite{Sol88}. 
The advantage of our representation is that it is not necessary to use
the estimate of the pressure to construct solutions to \eqref{eq-fixed}, which means that
the estimate of the boundary trace of the pressure term is not necessary in our analysis.

\subsubsection{Reformulation of the nonhomogeneous system}

The key difference between systems \eqref{inhomo-eq-original} and \eqref{eq-original} is the continuity equation $(\ref{inhomo-eq-original})_1$. 
%In our special case, the case of the divergencefree transport, we put our attention on the equation  
Applying the Lagrangian coordinates we introduce the new density
\begin{equation}
    \eta(y,t) = \rho (\bX(y,t),t).
\end{equation}
Then the equation $(\ref{inhomo-eq-original})_1$ takes the following simple form
\begin{equation}
\partial_t \eta(y,t)=0, \mbox{ \ thus \ } 
\eta(y,t)=\rho_0(y).
\end{equation}
It implies that in the $(y,t)$ coordinates the new velocity is constant in time, which implies that
the original density is constant along the streamlines.
Thus, the equations (\ref{inhomo-eq-original}), take the form
\begin{align}
\label{inhomo-eq-fixed}
\left\{\begin{aligned}
\pd_t \eta& =0& \quad & \text{in $\Omega_0 \times {\color{black} (0, T)}$}, \\
\rho_0 \pd_t \bW - \DV (\mu \BD(\bW) - \sfP \BI) & = \bF (\bW) & \quad & \text{in $\Omega_0 \times {\color{black} (0, T)}$}, \\
\dv \bW & = G_\mathrm{div} (\bW) = \dv \bG (\bW) & \quad & \text{in $\Omega_0 \times {\color{black} (0, T)}$}, \\
(\mu \BD\bW) - \sfP \BI) \bn_0 & = \BH (\bW) \bn_0 & \quad & \text{on $\pd \Omega_0 \times {\color{black} (0, T)}$}, \\
\bW \vert_{t = 0}, \eta|_{t=0}=\rho_0 & = \bU_0 & \quad & \text{in $\Omega_0$},
\end{aligned}\right.		
\end{align}
Then the system can be treated as a perturbation of (\ref{eq-fixed}) restated as follows
\begin{align}
\label{eq-fixed:2}
\left\{\begin{aligned}
\pd_t \bW - \DV (\mu \BD(\bW) - \sfP \BI) & = 
\bF (\bW) + (1-\rho_0)\pd_t \bW & \quad & \text{in $\Omega_0 \times {\color{black} (0, T)}$}, \\
\dv \bW & = G_\mathrm{div} (\bW) = \dv \bG (\bW) & \quad & \text{in $\Omega_0 \times {\color{black} (0, T)}$}, \\
(\mu \BD\bW) - \sfP \BI) \bn_0 & = \BH (\bW) \bn_0 & \quad & \text{on $\pd \Omega_0 \times {\color{black} (0, T)}$}, \\
\bW \vert_{t = 0} & = \bU_0 & \quad & \text{in $\Omega_0$},
\end{aligned}\right.		
\end{align}
The above form shows that the inhomogeneous version of the classical Navier-Stokes equations is indeed a small perturbation of the homogeneous system. We only need to control the perturbation $(1-\rho_0)\pd_t \bW$.

\subsection{Main results} \label{sec:results}
As outlined in the introduction, we investigate problems with initial domain being a half-space or its small perturbation. We are now in a position to formulate the smallness condition on the domain perturbation in the second case. Namely, we assume for $\epsilon>0$ small
\begin{equation}\label{omega}
    \Omega_0 = \{ x \in \R^d: x_d > h(x') \mbox{ \ with \ } x' \in \R^{d-1}: 
    \|h\|_{(W^{2-1/p}_{q_2} \cap W^{2-1/p}_{q_1})(\R^{d-1})} <\epsilon\}.
\end{equation}
Our first main result gives the global wellposedness in the maximal $L_p-L_q$ regularity setting. The results covers both inhomogeneous and homogeneous systems, and reads as follows. In order to formulate it, let us introduce a brief notation for the regularity class of the solutions. Namely, define 
\begin{equation} \label{sol:space}
\Xi = \left\{ v \in \bigcap_{i=1}^2 L_p(\BR_+; W^2_{q_i}(\Omega_0)^d) \cap W^1_p(\BR_+;
L_{q_i}(\Omega_0)^d): \quad tv \in \bigcap_{i=1}^2 \dot W^{2,1}_{q_i,p}  \right\},
\end{equation}
where the space $\dot W^{2,1}_{q,p}$ is defined in \eqref{def:space1} below.
\begin{thm}\label{thm:gwp}
Let $d \geq 2$ and assume $\epsilon$ is sufficiently small. Assume $\Omega_0=\R^d_+$, or it is given by \eqref{omega}. Assume moreover
\begin{equation}\label{require.1}
0 < \theta < \frac{1}{2}, \quad \frac1p < \frac{\theta}{2}
\end{equation}
and set 
\begin{equation}\label{exp:1}
\frac{1}{q_0} = \frac{1+2\theta}{d}, 
\quad \frac{1}{q_1} = \frac{1+\theta}{d}, \quad  \frac{1}{q_2} = \frac{\theta}{d}.
\end{equation} 
In case of inhomogeneous system (\ref{inhomo-eq-fixed})
we assume additionally that 
\begin{equation} \label{small}
\|\rho_0-1\|_{L_\infty(\Omega_0)} < \epsilon \quad {\rm and} \quad \|\rho_0-1\|_{L_{q_2}(\Omega_0)} < \epsilon. 
\end{equation}
Denote
\begin{equation} \label{def:ETW}
\begin{aligned}
E_T(\bW) = \sum_{i=1}^2\Big(\|(1+t)(\pd_t, \nabla^2)\bW\|_{L_p((0, T); L_{q_i}(\Omega_0))}
&+\|\bW\|_{L_p((0, T), L_{q_i}(\Omega_0))} \\
&+ \|\bW\|_{L_\infty((0, T), L_{q_i}(\Omega_0))}\Big).
\end{aligned}
\end{equation}
Then, there exists a small $\sigma > 0$ such that if initial data $\bU_0$ 
for equations  \eqref{eq-fixed}, respectively (\ref{inhomo-eq-fixed}), satisfies the smallness condition:
\begin{equation}\label{small:1}
\sum_{i=1}^2 \|\bU_0\|_{B^{2(1-1/p)}_{q_i, p}(\Omega_0)} + 
\|\bU_0\|_{L_{q_0}(\Omega_0)} \leq \sigma,
\end{equation}
then problem \eqref{eq-fixed}, respectively (\ref{inhomo-eq-fixed}), admits a unique solution $\bW \in \Xi$
satisfying the estimate: $E_{+\infty}(\bW) \leq c\sigma$ with some constant $c>0$.
\end{thm}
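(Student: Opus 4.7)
The approach is a Banach fixed-point argument in a small closed ball of $\Xi$. For $\bW\in\Xi$ with $E_\infty(\bW)\le M\sigma$, I set $\widetilde\bW=\Phi(\bW)$ to be the velocity component of the solution of the linear Stokes problem obtained by treating the nonlinear terms of \eqref{eq-fixed:2} as given data: interior force $\bF(\bW)+(1-\rho_0)\pd_t\bW$, divergence $G_{\mathrm{div}}(\bW)=\dv\bG(\bW)$, Neumann boundary datum $\BH(\bW)\bn_0$, and initial datum $\bU_0$. The unique solvability of $\Phi(\bW)\in\Xi$ together with the bound
\begin{equation*}
E_\infty(\Phi(\bW))\le C\sum_{i=1}^2\bigl(\|\bU_0\|_{B^{2(1-1/p)}_{q_i,p}(\Omega_0)}+\|\bU_0\|_{L_{q_0}(\Omega_0)}\bigr)+C\,\CN(\bW),
\end{equation*}
where $\CN(\bW)$ collects the appropriate norms of the right-hand sides, follow from the time-weighted maximal $L_p$-$L_q$ regularity theorem for the Stokes system in the half-space and in its $W^{2-1/p}_{q_i}$-small bent variant developed by Shibata (cf.\ \cite{SS15,SS20}). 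The role of the extra integrability $\bU_0\in L_{q_0}$, combined with \eqref{require.1}--\eqref{exp:1}, is to supply the $L_\infty((0,\infty);L_{q_i})$-component of $E_\infty$ and the polynomial time-weight $(1+t)$ via the Sobolev ladder $1/q_0-1/q_1=1/q_1-1/q_2=1/d$. At the top of this ladder one has $W^1_{q_2}\hookrightarrow L_\infty$ since $q_2=d/\theta>d$, and this embedding is the main workhorse of the nonlinear bookkeeping.

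By the convergent Neumann series \eqref{def-Au}, each interior contribution in $\bF$, $G_{\mathrm{div}}$ and $\bG$ is a sum of products of $\bigl(\int_0^t\nabla\bW\,d\tau\bigr)^{k}$ with one of $\pd_t\bW,\nabla^2\bW,\nabla\bW$. H\"older's inequality in the split $L_\infty\cdot L_{q_i}$, with the $L_\infty$ factor carried by $\nabla\bW$ through $\|\nabla\bW\|_{L_\infty}\le C\|\bW\|_{W^2_{q_2}}$, and the time-weight $(1+t)$ absorbing $\int_0^t(\cdot)\,d\tau$, fits each of these terms into the appropriate data space with a quadratic bound $\le CE_\infty(\bW)^2$. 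The corresponding Lipschitz estimate on differences $\Phi(\bW_1)-\Phi(\bW_2)$ follows from the same splittings.

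The genuine obstacle is the boundary nonlinearity $\BH(\bW)\bn_0$, which the Stokes theory requires to be estimated in $L_p((0,\infty);W^{1-1/q_i}_{q_i}(\pd\Omega_0))$ of fractional order in space, while the factors entering $\BH$ naturally live only in integer-order Sobolev spaces. This is where the new technique stressed in the introduction enters: using the Calder\'on identification $W^{1-1/q}_q=[L_q,W^1_q]_{1-1/q}$ of fractional Sobolev spaces as complex interpolates (and its vector-valued analogue in the time variable, cf.\ \cite{BL}), one estimates each bilinear product $a(\bW)\cdot b(\bW)$ appearing in $\BH$ separately at the two endpoints $L_q$ and $W^1_q$ by ordinary H\"older and Leibniz inequalities routed through the scaling ladder \eqref{exp:1}, and then recovers the fractional target by complex interpolation with a bound $\le CE_\infty(\bW)^2$. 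This detour replaces the notoriously technical direct product estimates in fractional Sobolev spaces on the boundary by clean integer-order ones, and is precisely what makes the two-dimensional case accessible. Once the quadratic and Lipschitz bounds for $\CN$ are secured, choosing $\sigma$ small in \eqref{small:1} renders $\Phi$ a contraction of the ball $\{E_\infty(\bW)\le M\sigma\}$ into itself; for the inhomogeneous problem \eqref{eq-fixed:2} the extra source $(1-\rho_0)\pd_t\bW$ is controlled at the two scales $q_1$ and $q_2$ by the complementary smallness conditions \eqref{small} on $\rho_0-1$ in $L_\infty$ and in $L_{q_2}$, yielding an additional prefactor $\epsilon$ that can be absorbed into the left-hand side. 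The Banach fixed-point theorem then yields the desired solution $\bW\in\Xi$ of \eqref{eq-fixed}, respectively \eqref{inhomo-eq-fixed}, satisfying $E_\infty(\bW)\le c\sigma$ as claimed.
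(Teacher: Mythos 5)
Your overall framework (contraction mapping in a small ball of $\Xi$, with the time-weighted maximal $L_p$--$L_q$ regularity of the linearized Stokes problem supplying the linear estimate, the scaling ladder \eqref{exp:1} with $W^1_{q_2}\hookrightarrow L_\infty$ driving the interior nonlinear bookkeeping, and the two smallness conditions \eqref{small} absorbing $(1-\rho_0)\pd_t\bW$) is a legitimate variant of the paper's route, which instead establishes the uniform-in-$T$ quadratic a priori bound \eqref{apriori} and invokes local well-posedness plus continuation. Up to that point the two arguments are essentially interchangeable.

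The genuine gap is in your treatment of the boundary nonlinearity. You assert that the Stokes theory requires $\BH(\bW)\bn_0$ in $L_p((0,\infty);W^{1-1/q_i}_{q_i}(\pd\Omega_0))$, i.e.\ fractional regularity in the \emph{space} variable, and you propose to recover it by complex interpolation $W^{1-1/q}_q=[L_q,W^1_q]_{1-1/q}$ between integer-order spatial estimates. But in the maximal regularity theorem actually needed here (Theorem \ref{thm:max} and its perturbed-domain version), the boundary datum must satisfy $\bH\in L_p(\BR_+;\dot W^1_{q_i})\cap \dot H^{1/2}_p(\BR_+;L_{q_i})$: the spatial part $\nabla\bH$ is harmless (it has the same structure as $\bF$ and $\nabla\Gdiv$ and is handled by the same H\"older/embedding estimates), whereas the essential difficulty is the \emph{half time derivative}, i.e.\ the bound $\|(1+t)\BH(\bW)\|_{\dot H^{1/2}_p((0,T);L_{q_i})}\le CE_T(\bW)^2$ in \eqref{est:half}. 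Your spatial interpolation produces nothing in the direction of $\dot H^{1/2}_p$ in time, so the linear theorem cannot be applied to your map $\Phi$ and the contraction estimate does not close. Closing this is precisely the paper's central technical contribution: for the product structure $Sf=fB(\int_0^t g\,d\tau)$ inherited from the Lagrangian coordinates, one interpolates (complex interpolation in the time variable, with the weighted intermediate spaces $L_{2p}(dt/t^{1/2};L_{r_1})$ and $L_p(dt/t^{1/2};L_{r_1^+})$) between the $L_p(L_{r_0})$ estimate of $Sf$ and the $L_p(L_{r_0})$ estimate of $\pd_t(Sf)$, as in Lemma \ref{l:interp}, and then one still needs the auxiliary bounds such as $\|\nabla v\|_{L_1(\R_+;L_{q_2})}\lesssim E_T(v)$ and $\|(1+t)^{1/2}\nabla v\|_{L_{2p}(\R_+;L_{2q_1})}\lesssim E_T(v)$ to convert the interpolated norms into $E_T(\bW)^2$. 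Without an argument of this type (or an equivalent estimate of $\Lambda^{1/2}\bH(\bW)$ in $L_p(L_{q_i})$ with the weight $(1+t)$), your proof is incomplete at its most delicate step.
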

\begin{rmk}
Condition \eqref{exp:1} implies in particular
\begin{equation}\label{exp:2}
\dfrac{1}{q_0} = \dfrac{1}{q_1} + \dfrac{1}{q_2}, \quad 
d(\frac{1}{q_1}-\frac{1}{q_2}) = 1, \quad d(\frac{1}{q_0}-\frac{1}{q_1}) = \theta, \quad 
1 < q_0 < q_1 < d < q_2.
\end{equation}
The first relation allows to apply H\"older inequality, while the second is related to the Sobolev imbedding. We shall make extensive use of these relations.
\end{rmk}

Relying on Theorem \ref{thm:gwp} we proceed with proving our second main result, which gives global in time well posedness, still for the incompressible system, in the setting of Lorentz spaces. The result reads 

\begin{thm}\label{thm:Lorentz}
Let $d \geq 2$ and $\epsilon$ is sufficiently small. Assume $\Omega_0=\R^d_+$, or it is given by \eqref{omega}. Assume moreover that \eqref{require.1} and \eqref{exp:1} hold.
%
%\begin{equation}\label{require.1}
%0 < \theta < \frac{1}{2}, \quad \frac1p < \frac{\theta}{2}
%\end{equation}
%and set 
%\begin{equation}\label{exp:1}
%\frac{1}{q_0} = \frac{1+2\theta}{d}, 
%\quad \frac{1}{q_1} = \frac{1+\theta}{d}, \quad  \frac{1}{q_2} = \frac{\theta}{d}.
%\end{equation} 
In case of inhomogeneous system (\ref{inhomo-eq-fixed})
we assume moreover \eqref{small}. 
%that $\|\rho_0-1\|_{L_\infty(\Omega_0)} < \epsilon$ and $\|\rho_0-1\|_{L_{q_2}(\Omega_0)} < \epsilon$. 
Denote
% \begin{equation} %\label{def:ETW:2}
% \begin{aligned}
% E_{L,T}(\bW) = \|(\pd_t, \nabla^2)\bW\|_{L_{p,1}((0, T); L_{q_2}(\Omega_0))}+\|(1+t)(\pd_t, \nabla^2)\bW\|_{L_{p,1}((0, T); L_{q_1}(\Omega_0))}\\
% +\|\bW\|_{L_{p,1}((0, T), L_{q_1}(\Omega_0))}. 
% %&+ \|\bW\|_{L_\infty((0, T), L_{q_i}(\Omega_0))}).
% \end{aligned}
% \end{equation} 
% {\color{blue}{\bf albo (zamyka sie dla tej normy):}
\begin{equation} \label{def:ETW:2}
\begin{aligned}
E_{L,T}(\bW) = \sum_{j=1}^2\|\bW\|_{W^{2,1}_{q_j,(p,1)}(\Omega_0 \times (0,T))}+\|(1+t)(\pd_t, \nabla^2)\bW\|_{L_{p,1}((0, T); L_{q_1}(\Omega_0))},
%&+ \|\bW\|_{L_\infty((0, T), L_{q_i}(\Omega_0))}).
\end{aligned}
\end{equation} 
where the space $W^{2,1}_{q,(p,1)}$ is defined in \eqref{def:space}.
Then, there exists a small $\sigma > 0$ such that if initial data $\bU_0$ 
for equations  \eqref{eq-fixed}, respectively (\ref{inhomo-eq-fixed}), satisfies the smallness condition: 
\begin{equation}\label{small:2}
\sum_{i=1}^2 \|\bU_0\|_{B^{2(1-1/p)}_{q_i, 1}(\Omega_0)} + 
\|\bU_0\|_{L_{q_0}(\Omega_0)} \leq \sigma,
\end{equation}
then problem \eqref{eq-fixed}, respectively (\ref{inhomo-eq-fixed}), admits a unique solution $\bW$ with
$$\bW \in \bigcap_{i=1}^2  \, W^{2,1}_{q_i,(p,1)}(\Omega_0 \times \BR_+)$$
satisfying the estimate: $E_{+\infty}(\bW) \leq c\sigma$ with some constant $c>0$.
\end{thm}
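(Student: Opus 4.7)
The plan is to bootstrap Theorem \ref{thm:gwp} to the Lorentz setting via real interpolation at the linear level, and then to close a fresh contraction argument in the space underlying $E_{L,T}$. The decisive gain, advertised in the introduction, is the borderline bound
\[
\int_0^\infty \|\nabla\bW(\cdot,\tau)\|_{L_\infty(\Omega_0)}\,\d\tau \lesssim E_{L,T}(\bW),
\]
which is the natural outcome of pairing the time-weighted $L_{p,1}(L_{q_1})$ control of $\nabla^2\bW$ with the weight $1/(1+t)\in L_{p',\infty}$ via the Lorentz H\"older inequality. This bound keeps the matrices $\BA_{\bW}-\BI$, $\BA_{\bW}\BA_{\bW}^\top-\BI$ and $\int_0^t\nabla\bW\,\d\tau$ appearing in \eqref{def-nonlinear-terms} uniformly small in $L_\infty(\Omega_0)$ on all of $(0,T)$, and renders every nonlinear term cleanly quadratic in $E_{L,T}$.

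\textbf{Linear step.} I would pick two exponents $p_0<p<p_1$ close to $p$ for which \eqref{require.1} still holds with the same $\theta$, so that $q_0,q_1,q_2$ given by \eqref{exp:1} are unchanged. The linear maximal regularity theory underpinning Theorem \ref{thm:gwp} supplies, for $j=0,1$, a bounded solution operator for the linearized Stokes system mapping initial data in $B^{2(1-1/p_j)}_{q_i,p_j}(\Omega_0)$ into $L_{p_j}(\R_+;W^2_{q_i}(\Omega_0)) \cap W^1_{p_j}(\R_+;L_{q_i}(\Omega_0))$, together with the matching estimates for the inhomogeneities $\bF$, $\Gdiv$, $\bG$, $\BH$ and for the time-weighted components present in $E_T(\bW)$. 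Real interpolation with parameters $(\theta_*,1)$ with $1/p = (1-\theta_*)/p_0 + \theta_*/p_1$ yields the identities
\[
(L_{p_0}(\R_+;X),L_{p_1}(\R_+;X))_{\theta_*,1} = L_{p,1}(\R_+;X),
\]
\[
(B^{2(1-1/p_0)}_{q_i,p_0},B^{2(1-1/p_1)}_{q_i,p_1})_{\theta_*,1} = B^{2(1-1/p)}_{q_i,1},
\]
and transports each linear estimate into the Lorentz maximal regularity class $W^{2,1}_{q_i,(p,1)}$, producing the desired linear bound on $E_{L,T}$ in terms of the initial data norm appearing in \eqref{small:2}.

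\textbf{Nonlinear analysis, fixed point and main obstacle.} With the linear theory in hand, I would close a contraction mapping argument on $\{\bW:E_{L,\infty}(\bW)\le M\sigma\}$ with $M$ large and $\sigma$ small. Each nonlinear quantity in \eqref{def-nonlinear-terms} is estimated via the Sobolev embeddings $W^2_{q_2}\hookrightarrow W^1_\infty$ (valid since $q_2>d$) and $W^2_{q_1}\hookrightarrow W^1_{q_2}$ (an immediate consequence of the second identity in \eqref{exp:2}), combined with the Lorentz H\"older inequality $L_{p,1}\cdot L_{p',\infty}\hookrightarrow L_1$ applied on the tail $(1,\infty)$ with the integrable weight $1/(1+t)$; on the short interval $(0,1)$ one exploits instead the unweighted control on $\|\bW\|_{W^{2,1}_{q_2,(p,1)}}$. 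Together these yield the borderline bound on $\int_0^\infty\|\nabla\bW\|_{L_\infty}\d\tau$ quoted above, which makes all nonlinearities in \eqref{def-nonlinear-terms} quadratic in $E_{L,\infty}$; the inhomogeneous perturbation $(1-\rho_0)\pd_t\bW$ from \eqref{eq-fixed:2} is absorbed onto the left-hand side by the $L_\infty$-smallness of $\rho_0-1$ in \eqref{small}, while the $L_{q_2}$-smallness controls the low-order part. The main obstacle is the boundary term $\BH(\bW)$, which must be controlled in fractional Sobolev norms on $\pd\Omega_0$; here I would invoke the complex interpolation technique developed for Theorem \ref{thm:gwp} to convert fractional-order boundary estimates into products of integer-order ones, checking that each intermediate inequality remains stable under the additional real interpolation in time carried out in the linear step.
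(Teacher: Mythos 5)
Your linear step is essentially the paper's: the authors also fix two nearby exponents $p_1<p<p_2$, apply the $L_p$--$L_q$ maximal regularity and time-weighted estimates of Theorems \ref{thm:max}--\ref{thm:weight:pert} at each of them, and use real interpolation $(L_{p_1}(I;X),L_{p_2}(I;X))_{1/2,1}=L_{p,1}(I;X)$ together with $(\dot B^{2-2/p_1}_{q,p_1},\dot B^{2-2/p_2}_{q,p_2})_{1/2,1}=\dot B^{2-2/p}_{q,1}$ to obtain the Lorentz-space Stokes estimates (Lemmas \ref{l:Stokes:Lorentz}--\ref{l:weight:lorentz} and their perturbed-half-space versions). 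Up to this point the two arguments coincide.

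The gap is in your nonlinear step. First, the claim that every nonlinearity is ``cleanly quadratic in $E_{L,\infty}$'' is not substantiated, and in fact the functional \eqref{def:ETW:2} is too weak to close the estimates on its own: $E_{L,T}$ contains the time-weighted norm only in $L_{q_1}$, whereas the nonlinear bounds (and already the pointwise bound $\|\nabla \bW\|_{L_\infty}\lesssim\|\nabla^2\bW\|_{L_{q_1}}^{1-\alpha}\|\nabla^2\bW\|_{L_{q_2}}^{\alpha}$ of \eqref{sup:1}, hence your borderline $L_1(L_\infty)$ bound) require simultaneous control in $q_1$ and $q_2$, including weighted components such as $\|(1+t)\nabla^2\bW\|_{L_p(\R_+;L_{q_2})}$ that sit in $\Xi$ but not in $E_{L,T}$. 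This is why the paper does not run a fresh contraction in the Lorentz ball: it first invokes Theorem \ref{thm:gwp} (admissible since $B^{2(1-1/p)}_{q_i,1}\subset B^{2(1-1/p)}_{q_i,p}$) to produce the solution with $\|\bW\|_{\Xi}\le c\sigma$, and then proves the a priori estimate \eqref{est:lorentz:final}, $E_{L,+\infty}(\bW)\lesssim E_{L,+\infty}(\bW)^2+\|\bW\|_{\Xi}^2+\sigma$, in which the $\Xi$-norm is an already-small known quantity. Second, the boundary term is exactly where this matters and where your proposal stops at a promissory note: since $\bH(\bW)$ is quadratic, one cannot simply ``interpolate the final inequality''; the paper (Lemma \ref{l:bdry:lorentz}) freezes the coefficient $B(\int_0^t\nabla\bW\,d\tau)$, reruns the complex-interpolation scheme of Lemma \ref{l:interp} at two shifted exponents $p^{\pm}$, and only then real-interpolates in $f$. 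Verifying the hypotheses at $p^{\pm}$ requires auxiliary norms at those exponents, which are supplied precisely by embeddings of the $\Xi$-class with a small loss in the time weight (cf.\ \eqref{pp:4} and \eqref{pm:3}); without the $\Xi$-norm (i.e., within your $E_{L}$-only scheme) this step has no source of control, so the argument as proposed would not close.
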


The final result is related to control of the nonlinear terms in low fractional spaces. It solves the problem with estimation of boundary terms in a specific Tribel-Lizorkin spaces.
Applying the complex interpolation one proves.

\begin{thm} \label{l:interp}
Let $f,g$ be a measurable functions.
Consider the operator 
\begin{equation}
    Sf = f B(\int_0^t g \,d\tau),
\end{equation}
where $B(0)=0$ and $B \in C^1$ in values of $g$. 
Assume that 
\begin{equation}
\frac{1}{r_0}=\frac{1}{r_1}+\frac{1}{r_2}
\end{equation}
and define $\frac{1}{r_1^+}=\frac{1}{2r_1}+\frac{1}{2r_2}$ and 
% \begin{equation} \label{c:20}
% \begin{aligned}
% &I_0(g):=\|g\|_{L_1(\R_+;L_{r_2})} +\|tg\|_{L_p(\HS;L_{r_2})},\\[3pt]
% %&I_1(g):=\|g\|_{L_1(\R_+;L_{r_2})} +\|tg\|_{L_p(\HS;L_{r_1})},\\[3pt]
% &I_1(g):=\|g\|_{L_1(\R_+;L_{r_2})} +\|tg\|_{L_{\infty}(\HS;L_{r_1})},\\[3pt]
% &\frac{1}{r_1^+}=\frac{1}{2r_1}+\frac{1}{2r_2}.
% \end{aligned}
% \end{equation}
 \begin{equation} \label{c:20}
I_0(g):=\|g\|_{L_1(\R_+;L_{r_2})} +\|tg\|_{L_p(\HS;L_{r_2})},\quad 
I_1(g):=\|g\|_{L_1(\R_+;L_{r_2})} +\|tg\|_{L_p(\HS;L_{r_1})}.
\end{equation}
Then 
\begin{align} 
    & S:  \dot H^{1/2}_p(\R_+;L_{r_1}) \cap L_{2p}(\R_+,(\frac{dt}{t^{1/2}});L_{r_1}) \mapsto \dot H^{1/2}_p(\R_+;L_{r_0}),\\
    & S: \dot H^{1/2}_p(\R_+;L_{r_1}) \cap 
    L_{p}(\R_+,(\frac{dt}{t^{1/2}});L_{r_1^+}) \mapsto 
    \dot H^{1/2}_p(\R_+;L_{r_0})
\end{align}    
is the linear bounded operator, such that
\begin{align} 
    &\|Sf\|_{\dot H^{1/2}_p(\R_+;L_{r_0})}\leq
    C I_0(g) (\|f\|_{\dot H^{1/2}_p(\R_+;L_{r_1})} +
    \|f\|_{L_{2p}(\R_+,(\frac{dt}{t^{1/2}});L_{r_1})}),\label{half:1}\\[3pt]
    &\|Sf\|_{\dot H^{1/2}_p(\R_+;L_{r_0})}\leq
    C I_1(g) (\|f\|_{\dot H^{1/2}_p(\R_+;L_{r_1})} +
    \|f\|_{L_{p}(\R_+,(\frac{dt}{t^{1/2}});L_{r_1^+})}).  \label{half:2}
    %\\[3pt]
    %&\|Sf\|_{\dot H^{1/2}_p(\R_+;L_{r_0})}\leq 
    %C I_1(g) (\|f\|_{\dot H^{1/2}_p(\R_+;L_{r_1})} +
    %\|f\|_{L_{2p}(\R_+,(\frac{dt}{t^{1/2}});L_{r_1^+})}).\label{half:3}
\end{align}
\end{thm}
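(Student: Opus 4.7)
The plan is to obtain the $\dot H^{1/2}_p$–estimate by complex interpolation in the sense of \cite{BL}, between two endpoint estimates at integer regularity in time. As a preliminary reduction, since $B\in C^1$ with $B(0)=0$, the mean-value theorem gives the pointwise bounds
\begin{equation*}
\left|B\!\left(\int_0^t g\,d\tau\right)\right| \leq C \int_0^t |g(\tau)|\,d\tau,\qquad \left|B'\!\left(\int_0^t g\,d\tau\right)\right| \leq C,
\end{equation*}
so up to constants depending only on $B$ the analysis reduces to the model bilinear operator $T(f,g)(t):=f(t)\int_0^t g(\tau)\,d\tau$ applied to the same data.

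The low endpoint (order zero in time) is immediate from the H\"older relation $1/r_0=1/r_1+1/r_2$: pointwise in $t$,
\begin{equation*}
\|Sf(t)\|_{L_{r_0}} \leq C\,\|f(t)\|_{L_{r_1}}\int_0^t \|g(\tau)\|_{L_{r_2}}\,d\tau \leq C\,\|g\|_{L_1(\R_+;L_{r_2})}\,\|f(t)\|_{L_{r_1}},
\end{equation*}
which integrated in $t$ yields $S\colon L_p(\R_+;L_{r_1})\to L_p(\R_+;L_{r_0})$ with operator norm bounded by $C\,\|g\|_{L_1(\R_+;L_{r_2})}$.

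The high endpoint (order one in time) uses the Leibniz rule
\begin{equation*}
\partial_t(Sf)=\partial_t f\cdot B\!\left(\int_0^t g\,d\tau\right)+f\cdot B'\!\left(\int_0^t g\,d\tau\right)g.
\end{equation*}
The first term is handled exactly as in the low endpoint with $\partial_t f$ in place of $f$. For the second term, the identity $\|fg(t)\|_{L_{r_0}}\leq \|f(t)\|_{L_{r_1}}\|g(t)\|_{L_{r_2}}=t^{-1}\|f(t)\|_{L_{r_1}}\|tg(t)\|_{L_{r_2}}$, combined with H\"older in time, produces a bound $C\,\|tg\|_{L_p(\R_+;L_{r_2})}\|f\|_{Z}$ for an auxiliary weighted space $Z$ of $f$ that carries the compensating weight $t^{-1}$. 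For the second claim of the theorem, the spatial H\"older is instead applied with equal exponents, $\|fg\|_{L_{r_0}}\leq\|f\|_{L_{r_1^+}}\|g\|_{L_{r_1^+}}$ (note $1/r_1^+=1/(2r_0)$, so $r_1^+=2r_0$), which places both factors in the ``geometric-mean'' space $L_{r_1^+}$.

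With the two endpoints in hand, the standard identity $[L_p(\R_+;L_r),\dot H^1_p(\R_+;L_r)]_{1/2}=\dot H^{1/2}_p(\R_+;L_r)$ applied on both source and target sides delivers the desired $\dot H^{1/2}_p$-bound. The principal obstacle lies in the high-endpoint analysis and the subsequent interpolation: one must identify the auxiliary endpoint $Z$ so that the complex interpolate $[L_p(L_{r_1}),Z]_{1/2}$ coincides exactly with the weighted spaces $L_{2p}(\R_+,dt/t^{1/2};L_{r_1})$ of \eqref{half:1} and $L_p(\R_+,dt/t^{1/2};L_{r_1^+})$ of \eqref{half:2}. The weight $dt/t^{1/2}$ emerges naturally under the substitution $s=t^{1/2}$, which exchanges one power of $t$ for half an order of smoothness, while the appearance of $L_{r_1^+}$ reflects an additional interpolation in the spatial index via $[L_{r_1},L_{r_2}]_{1/2}=L_{r_1^+}$. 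Once the auxiliary endpoints are fixed in this way, a single application of complex interpolation yields both \eqref{half:1} and \eqref{half:2}, with the factors $I_0(g)$ and $I_1(g)$ arising as the norm of $g$ in the corresponding intersection space at the high endpoint.
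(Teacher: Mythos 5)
For \eqref{half:1} your route is the paper's: the low endpoint is the H\"older bound $\|Sf\|_{L_p(\R_+;L_{r_0})}\le C\|g\|_{L_1(\R_+;L_{r_2})}\|f\|_{L_p(\R_+;L_{r_1})}$ (this is \eqref{int:10}), the high endpoint comes from the Leibniz rule with the splitting $\|fB'g\|_{L_{r_0}}\le t^{-1}\|f\|_{L_{r_1}}\|tg\|_{L_{r_2}}$ paired as $\|tg\|_{L_p(\R_+;L_{r_2})}\,\|f/t\|_{L_\infty(\R_+;L_{r_1})}$ (this is \eqref{int:10a}--\eqref{int:11}), and then one interpolates. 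What you leave as ``the principal obstacle'' is in fact the only step still to be written down: the auxiliary space is explicitly $Z=L_\infty(\R_+,t^{-1}dt;L_{r_1})$ and the identification $\bigl(L_p(\R_+,dt;L_{r_1}),L_\infty(\R_+,t^{-1}dt;L_{r_1})\bigr)_{1/2}=L_{2p}(\R_+,t^{-1/2}dt;L_{r_1})$ is the weighted mixed-norm complex interpolation identity \eqref{complex:int}; the substitution $s=t^{1/2}$ is a heuristic, not a proof, so you should invoke that identity rather than gesture at it.

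For \eqref{half:2} there is a genuine gap. You propose to treat the term $fB'g$ at the high endpoint by the equal-exponent H\"older inequality $\|fg\|_{L_{r_0}}\le\|f\|_{L_{r_1^+}}\|g\|_{L_{r_1^+}}$. This fails to produce the stated estimate for two reasons. First, the resulting factor on $g$ is an $L_{r_1^+}$-based norm of $tg$, which is not controlled by $I_1(g)$: by \eqref{c:20} the quantity $I_1$ measures $tg$ spatially in $L_{r_1}$, not in $L_{r_1^+}$. Second, and more structurally, the exponent $r_1^+$ in the $f$-norm of \eqref{half:2} is \emph{not} obtained by placing $f$ in $L_{r_1^+}$ at the high endpoint; it is generated by the interpolation itself. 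If your high endpoint puts $f$ in $L_{r_1^+}$ spatially, then interpolating with the low endpoint (where $f$ sits in $L_{r_1}$) yields the spatial index $\tfrac12\cdot\tfrac1{r_1}+\tfrac12\cdot\tfrac1{r_1^+}=\tfrac{3}{4r_1}+\tfrac1{4r_2}$, which differs from $\tfrac1{r_1^+}=\tfrac1{2r_1}+\tfrac1{2r_2}$ unless $r_1=r_2$; so you cannot recover $\|f\|_{L_p(\R_+,dt/t^{1/2};L_{r_1^+})}$. The correct high endpoint, which is what the paper does in \eqref{int:12}--\eqref{int:13}, swaps the roles of the exponents: $\|fB'g\|_{L_{r_0}}\le t^{-1}\|tg\|_{L_{r_1}}\|f\|_{L_{r_2}}$, estimated by $\|tg\|_{L_\infty(\R_+;L_{r_1})}\,\|f/t\|_{L_p(\R_+;L_{r_2})}$ (note that in the proof the second piece of $I_1$ is taken in $L_\infty$ in time, so that the product lands in $L_p$), and then $r_1^+$ appears through $\bigl(L_p(\R_+,dt;L_{r_1}),L_p(\R_+,t^{-1}dt;L_{r_2})\bigr)_{1/2}=L_p(\R_+,t^{-1/2}dt;L_{r_1^+})$. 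Your closing remark that $L_{r_1^+}$ ``reflects an interpolation via $[L_{r_1},L_{r_2}]_{1/2}=L_{r_1^+}$'' is exactly this mechanism, but it contradicts the equal-exponent H\"older step you actually propose; as written, the second estimate is not proved.
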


The proof of the above theorem one finds in Section 6. Note that we are interested in the inequalities (\ref{half:1}) and (\ref{half:2}). Next, we introduce notations used in our article.

\subsection{Functional spaces} In this part we introduce the functional spaces which we are using in our considerations. 
We use standard notation $L_p(\Omega)$ and $W^k_p(\Omega)$ for, respectively, Lebesgue and Sobolev spaces. By $\dot W^k_p(\Omega)$ we denote a homogeneous Sobolev space equipped with the seminorm 
\begin{equation}
\|f\|_{\dot W^k_p} = \sum_{|\alpha|=k} \|D^\alpha f\|_{L_p}.
\end{equation}
Moreover, for a spatial domain $\Omega$, time interval $I$, $m,n \in \N$ and $1\leq p,q\leq \infty$ we denote 
\begin{equation} \label{def:wmn}
W^{m,n}_{q,p}(\Omega \times I)=\{u:\; \nabla^k u \in L_p(\R_+(L_q(\Omega)))\; {\rm for}\; k\leq m, \;\; \de^l_t u \in L_p(\R_+(L_q(\Omega))) \;{\rm for}\, l \leq n \},
\end{equation}
where $\nabla^0 u:=u$. We also denote 
\begin{equation} \label{def:sp}
\hat W^1_{q,0}(\Omega) = \{ f \in L_{q, {\rm loc}}(\Omega) \mid
\nabla f \in L_q(\Omega)^d, \quad f|_{x_d=0} = 0\}.
\end{equation}
and $\dot W^{-1}_q(\Omega) = \hat W^1_{q, 0}(\Omega)^*$. 
Furthermore, we introduce the solenoidal space $J^q(\Omega)$, which is defined by
$$J_q(\Omega) = \{\bu \in L_q(\Omega) \mid (\bu, \nabla\varphi) = 0
\quad\text{for every $\varphi \in \hat W^1_{q',0}$}.$$

{\bf Lorentz spaces} are defined on a measure space $(X,\mu)$ by real interpolation as 
\begin{equation*}
L_{p,r}(X,\mu)=(L_\infty(X,\mu),L_1(X,\mu))_{\frac{1}{p},r} \quad {\rm for} \quad 
p\in(1,\infty), \; r \in [1,\infty]
\end{equation*}
In a direct way the spaces are defined as spaces of functions for which the norm 
\begin{equation*}
    \|f\|_{L_{p,r}}=\left\{
    \begin{array}{lr}
   \displaystyle p^{1/r} \left( \int_0^\infty (s|\{|f|>s\}|^{1/p})^r \frac{ds}{s}\right)^{1/r}     &  \mbox{ for \ } r<\infty, \\[14pt]
    \displaystyle \sup_{s>0} s|\{ |f|>s\}|^{1/p}     &  \mbox{ for \ } r=\infty,
    \end{array}
    \right. 
\end{equation*}
where $|\cdot|$ denotes $\mu(\cdot)$, is finite. The
factor $p^{1/r}$ ensures that $\|f\|_{L_{p,p}}=\|f\|_{L_p}$.

Let us recall several properties of Lorentz spaces which can be found in \cite[Chapter 4.4]{BS}, see also \cite{G}.  
For brevity we write $L_p, L_{p,r}$ instead of, respectively, $L_p(X,\mu), L_{p,r}(X,\mu)$. 
\begin{enumerate}
    % \item $L_{p,p}=L_p$
    \item Imbedding: \begin{equation}\label{imbed}
    L_{p,p}=L_p, \;\;\; L_{p,r_1} \subset L_{p,r_2} \quad {\rm for} \quad r_1 \leq r_2.
    \end{equation}
    \item H\"older inequality: for $1<p,p_1,p_2<\infty$ and $1\leq r,r_1,r_2\leq\infty$ such that $\frac{1}{p}=\frac{1}{p_1}+\frac{1}{p_2}$ and $\frac{1}{r}=\frac{1}{r_1}+\frac{1}{r_2}$ we have   
    \begin{equation}\label{holder}
\|fg\|_{L_{p,r}}\leq \|f\|_{L_{p_1,r_1}}\|g\|_{L_{p_2,r_2}}
    \end{equation}
    Inequality holds also for $(p_1,r_1)=(1,1)$. 
    \item Generalized H\"older inequality: for $1<p,p_1,\ldots p_n$,
    $1\leq r,r_1,\ldots,r_n$ such that 
    $$
    \frac{1}{p}=\sum_{k=1}^n \frac{1}{p_k}, \quad 
    \frac{1}{r}=\sum_{k=1}^n \frac{1}{r_k}
    $$
    it holds 
    \begin{equation} \label{holder2}
        \|\Pi_{k=1}^n f_k\|_{L_{p,r}} \leq \Pi_{i=1}^k \|f_k\|_{L_{p_k,r_k}}.     
    \end{equation}
    
    \item If $f \in L_\infty(\Omega)$ and $g \in L_{p,q}(\Omega)$ then $fg \in L_{p,q}(\Omega)$ for all $p,q \in [1,\infty]$ and 
    \begin{equation} \label{0:1}
        \|fg\|_{L_{p,q}}\leq \|f\|_\infty \|g\|_{L_{p,q}}.
    \end{equation}
    \item For $\alpha>0$, $p,q,p\alpha,q\alpha \in [1,\infty]$
    \begin{equation} \label{0:2}
        \|f^\alpha\|_{L_{p,q}} \sim \|f\|_{L_{p\alpha,q\alpha}}^\alpha.
    \end{equation}
\end{enumerate}

{\bf Homogeneous and nonhomogeneous Besov spaces} can be defined by real interpolation of Sobolev spaces as, respectively, 
\begin{equation*}
\dot B^s_{q,p}(\Omega)=(L_q(\Omega);\dot W^1_q(\Omega))_{s,p}, \qquad    
B^s_{q,p}(\Omega)=(L_q(\Omega);W^1_q(\Omega))_{s,p}.
\end{equation*}
We recall two important properties of Besov spaces, the proofs of which can be found in \cite{D-book}.
The first one is that Besov spaces are an interpolation family, i.e.
\begin{equation} \label{0:3a}
    \dot B^s_{q,r}=(\dot B^{s_1}_{q_1,p_1},\dot B^{s_2}_{q_2,p_2})_{\beta,r}, \qquad
     B^s_{q,r}=(B^{s_1}_{q_1,p_1},B^{s_2}_{q_2,p_2})_{\beta,r}, 
\end{equation}
with $\beta \in (0,1)$, $s=(1-\beta)s_1+\beta s_2$ and $s_1,s_2 \in \R$, $p_1,p_2,r,q,q_1,q_2\in [1,\infty]$. This property implies the interpolation inequality 
\begin{equation} \label{int:besov}
    \|f\|_{\dot B^{s}_{q,r}} \leq C\|f\|_{\dot B^{s_1}_{q_1,p_1}}^{1-\beta} \|f\|_{\dot B^{s_2}_{q_2,p_2}}^\beta
\end{equation}
for parameters as above. The second one is the imbedding theorem
\begin{equation} \label{imbed:besov}
    \dot B^s_{q_1,r} \subset \dot B^{s-d(\frac{1}{q_1}-\frac{1}{q_2})}_{q_2,r}, \mbox{ \ \ for \ }  s \in \R,\; q_1,q_2,r \in [1,\infty],
\end{equation}
where $d$ is the space dimension. In particular, 
\begin{equation} \label{imbed:besov2}
\dot B^{d/p}_{q,1} \subset \dot B^0_{\infty,1} \subset L_\infty \mbox{ \ \ for \ } q,r \in [1,\infty]
\end{equation}
We shall denote 
\begin{equation} \label{frac:Sobolev}
W^s_q:=B^s_{q,q}, \quad \dot W^s_q=\dot B^s_{q,q}.
\end{equation}

We also recall the classical imbedding \cite{DHP} which is of crucial importance in the $L_p-L_q$ maximal regularity theory: 
\begin{equation} \label{para}
\|u\|_{L_\infty((0,T);B^{2-{2/p}}_{q,p}(\Omega))} \leq C \left( \|\de_t u,\nabla^2 u\|_{L_p((0,T);L_q(\Omega))}  + \|u(0)\|_{B^{2-{2/p}}_{q,p}(\Omega)} \right).
\end{equation}

We also recall the definition of $X$-valued Bessel potential space where $X$ is a Banach space. Namely, for $s\in (0,1)$ 
we define 
$$
H^s_p(\R,X)=\{f:\R\to X: \; \|f\|_{H^s_p(\R,X)}<\infty \},
$$
where 
$$
\|f\|_{H^s_p(\R,X)}= \bign \cF^{-1}\big[ (1+\tau^2)^{\frac{s}{2}}\cF[f](\tau) \big] \bign_{L_p(\R,X)}.
$$
Then we define the Bessel potential space on $\R_+$ by extension, as a space of functions $f:\R_+ \to X$ for which the norm 
$$
\|f\|_{H^s_p(\R_+,X)}:=\inf \{\|g\|_{H^s_p(\R,X)}: \; g \in H^s_p(\R,X), g|_{\R_+}=f \} <\infty.
$$
Bessel potential spaces can be also defined on a time interval $I$ by complex interpolation: 
\begin{equation} \label{complex:2}
H^s_p(I,X) = (L_p(I;X),W^1_p(I;X))_s.
\end{equation}
Next, by real interpolation we define
\begin{equation} \label{def:Hp1}
H^{1/2}_{p,1}(I,X)=\left( H^{1/2}_{p_1}(I,X),H^{1/2}_{p_2}(I,X) \right)_{\sigma,1}, \quad \frac{1}{p}=\frac{\sigma}{p_1}+\frac{1-\sigma}{p_2}
\end{equation}
In order to describe the regularity of the boundary data we set
\begin{equation} \label{def:lambda}
\Lambda^{1/2}\bH = \CF^{-1}[(ik)^{1/2}\CF[\bH](k)], 
\end{equation}
where $\CF$ and $\CF^{-1}$ denote, respectively, the Fourier transform with respect to time variable and the inverse formula with respect to dual variable $k$. 
We have 
\begin{equation} \label{half}
\|\Lambda^{1/2}\bH\|_{L_p(\BR, L_q(\HS))} \leq C \|\bH\|_{\dot H^{1/2}_p(\BR, L_q(\HS))}.
\end{equation}
With \eqref{frac:Sobolev} and \eqref{complex:2} we can generalize the definition \eqref{def:wmn}; namely, for $r,s \in \R$ and $1 \leq p,q \leq \infty$ we define 
\begin{equation} \label{def:space1}
W^{r,s}_{q,p}(\Omega \times I) = L_p(I;W^r_q(\Omega)) \cap H^s_p(I;L_q(\Omega)).
\end{equation}
A general imbedding theorem \cite{BIN} implies in particular 
\begin{equation} \label{imbed:3}
\|\nabla v\|_{W^{1,1/2}_{q,p}} \leq C \|v\|_{W^{2,1}_{q,p}}, \quad
\|\nabla v\|_{\dot W^{1,1/2}_{q,p}} \leq C \|v\|_{\dot W^{2,1}_{q,p}}
\end{equation}
and 
\begin{equation} \label{imbed:4}
\|\nabla v\|_{W^{1,1/2}_{q,(p,1)}} \leq C \|v\|_{W^{2,1}_{q,(p,1)}}, \quad \|\nabla v\|_{\dot W^{1,1/2}_{q,(p,1)}} \leq C \|v\|_{\dot W^{2,1}_{q,(p,1)}}.
\end{equation}
%
% \vskip10mm
% {\color{red} {\bf Tego chyba nie używamy:}
% Now we define, analogously to \eqref{def:wmn},
% \begin{equation}
% \|f\|_{H^{1,s}_{q,p}(\R^d_+\times \R_+)}=\{f:\R_+ \to W^1_q(\R^d_+): \; \|f\|_{H^{1,s}_{q,p}(\R^d_+\times \R_+)}<\infty \},
% \end{equation}
% where 
% \begin{equation}
% \|f\|_{H^{1,s}_{q,p}(\R^d_+\times \R_+)}=
% \|f\|_{L_p(\R_+;W^1_q(\R^d_+))}+\|f\|_{H^{s,p}(\R_+;L_q(\R^d_+))}.
% \end{equation}
% } 
%
Next, for $\Omega \subset \R^d$ we denote 
\begin{equation} \label{def:space}
W^{2,1}_{q,(p,r)}(\Omega \times \R_+):= \{ u \in C_b(\R_+; B^{2-2/p}_{q,r}(\Omega)): \; \de_t u,\nabla^2 u \in L_{p,r}(\R_+;L_q(\Omega) \}
\end{equation}
with the norm
\begin{equation*}
    \|u\|_{W^{2,1}_{q,(p,r)}(\Omega\times \R_+)} :=
    \|u\|_{L_\infty(\R_+;\dot B^{2-2/p}_{q,r}(\R^d_+))} +
    \|\partial_t z, \nabla^2 z\|_{L_{p,r}(\R_+;L_q(\Omega))}.
\end{equation*}
Finally, for $I \subset \R_+, \; \alpha \in \R$ and $f:(0,T) \to X$ where $X$ is a Banach space we denote 
\begin{equation} \label{def:lpdt}
\|f\|_{L_p(I;t^\alpha\,dt,X)} = \|t^\alpha f\|_{L_p(I,X)}.
\end{equation}

By complex interpolation \cite{Tr} we have 
\begin{equation} \label{complex:int}
\begin{aligned}
&\left( L_{p_1}(I,t^\alpha dt;L_{q_1}),L_{p_1}(I,t^\beta dt;L_{q_2}) \right)_{1/2} = L_{p}(I,t^\gamma dt;L_{q}),\\[3pt]
&\frac{1}{p}=\frac{1}{2p_1}+\frac{1}{2p_2}, \quad \frac{1}{q}=\frac{1}{2q_1}+\frac{1}{2q_2}, \quad \gamma=\frac{\alpha}{2}+\frac{\beta}{2}.
\end{aligned}
\end{equation}

{\bf Outline of the paper.} The paper unfolds as follows. In section \ref{sec:lin:lplq} we prove $L_p-L_q$ maximal regularity estimates in the half space for the linear problem related to \eqref{eq-fixed}. We also prove necessary time-weighted $L_p-L_q$ estimates. 
In section \ref{sec:lin:pert} we extend these results to the linearized problem in the perturbed halfspace.
Next, in Section \ref{sec:nonlin:lplq} 
we prove the estimates for the nonlinearities arising on the right hand side of \eqref{eq-fixed}. Here the main difficulty lies in the proof of necessary estimate in fractional Sobolev space for the nonlinearity related to the boundary term. The nonlinear estimates allow to complete the proof of Theorem \ref{thm:gwp}. In section \ref{sec:Lorentz:lin} we prove estimate for the linearized problem in Lorentz spaces. Next, in Section \ref{sec:nonlin:Lorentz} we derive Lorentz spaces estimates for the nonlinearities arising in \eqref{eq-fixed} and complete the proof of Theorem \ref{thm:Lorentz}.

%%%%%%%%%%%%%%%%
\section{Linear $L_p-L_q$ theory in the half space} \label{sec:lin:lplq}

%I would like to recall some spectral theory which I proved in Shibata and Shimizu \cite{SS12}.
In this section we consider time dependent Stokes problem corresponding to linearization of \eqref{eq-fixed}:
\begin{equation}\label{eq:1}\left\{\begin{aligned}
\pd_t\bW - \dv\BT(\bW, \sfP) = \bF&&\quad&\text{in $\HS\times\BR_+$}, \\
\dv\bW = \Gdiv = \dv\bG&&\quad&\text{in $\HS\times\BR_+$}, \\
\BT(\bW, \sfP) \bn_0=\bH &&\quad&\text{on $\pd\HS\times\BR_+$}, \\
\bW|_{t=0} = \bU_0&&\quad&\text{in $\HS$},
\end{aligned}\right.
\end{equation}
where $\bn_0 = (0, \ldots, 0, -1)^\top$, where we denote 
$$
\HS =\{x=(x_1, \ldots, x_d) \in \BR^d \mid x_d > 0\},  \quad
\pd\HS = \{x = (x_1, \ldots, x_d) \in \BR^d \mid x_d=0\}. 
$$
Since the term $(1-\rho_0)\de_t \bW$ on the RHS of \eqref{inhomo-eq-fixed} can be regarded as a small perturbation, system \ref{eq:1} can be also used as linearization of \eqref{inhomo-eq-fixed}.
The first main result of this section concerns maximal $L_p-L_q$ regularity 
for \eqref{eq:1}.
\begin{thm} \label{thm:max}
Let  $1 < p, \bar q < \infty$.  Let $\bU_0 \in \dot B^{(1-1/p)}_{\oq,p}(\HS)$ and 
\begin{align*}
\bF &\in L_p(\BR_+, L_{\oq}(\HS)^d), \quad 
\Gdiv \in L_p(\BR_+; \dW^1_{\oq}(\HS)), \quad \bG \in \dW^1_p(\BR_+; L_{\oq}(\HS)^d), \\
 \bH &\in L_p(\BR_+; \dW^1_{\oq}(\HS)^d) \cap \dH^{1/2}_p(\BR_+; L_{\oq}(\HS)^d).
\end{align*}
Then, problem \eqref{eq:1} admits unique solutions $\bW$ and $\sfP$ with
$$\bW \in L_p(\BR_+; \dot W^2_{\oq}(\HS)^d) \cap \dot W^1_p(\BR_+, L_{\oq}(\HS)^d)),
\quad \nabla \sfP \in L_p(\BR_+; L_{\oq}(\HS)^d)$$
satisfying the estimate:
\begin{equation} \label{lin:global}
\begin{aligned} 
&\|(\pd_t\bW, \nabla^2\bW, \nabla \sfP)\|_{L_p(\BR_+; L_{\oq}(\HS))} \\
&\quad \leq C(\|\bU_0\|_{\dot B^{2(1-1/p)}_{\oq,p}(\HS)} 
+ \|(\bF, \nabla \Gdiv, \pd_t\bG, \nabla \bH, \Lambda^{1/2}\bH\|_{L_p(\BR_+; L_{\oq}(\HS))})\\
&\quad \leq C(\|\bU_0\|_{\dot B^{2(1-1/p)}_{\oq,p}(\HS)} 
+ \|(\bF, \nabla \Gdiv, \pd_t\bG, \nabla \bH\|_{L_p(\BR_+; L_{\oq}(\HS))})
+ \|\bH\|_{\dot H^{1/2}_p(\BR_+; L_{\oq}(\HS))} .
\end{aligned}
\end{equation}
Moreover, if 
\begin{equation} \label{ic:zero}
F|_{t=0} =0, \Gdiv|_{t=0}=0,
\bG|_{t=0}=0 \;\;{\rm and}\;\; \bH|_{t=0}=0,
\end{equation}
then for any $T>0$ we have a local in time estimate
\begin{equation}\label{local.2}
\begin{aligned}
&\|(\pd_t, \nabla^2) \bW\|_{L_p((0, T); L_{\oq}(\HS))}
\leq  C(\|\bU_0\|_{B^{2(1-1/p)}_{\oq,p}(\HS)} \\
&\qquad +\|(\bF, \nabla \Gdiv, \pd_t \bG, \nabla \bH)\|_{L_p((0, T); L_{\oq}(\HS))}
+ \|\bH\|_{\dot H^{1/2}_p((0, T); L_{\oq}(\HS))}.
\end{aligned}
\end{equation}
Here, the constant $C$ is independent of $T$.
\end{thm}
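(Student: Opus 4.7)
The plan is a standard reduction to a Stokes system with vanishing initial data and solenoidal velocity, followed by invocation of the $\CR$-bounded resolvent analysis of \cite{SS12} combined with the Weis theorem \cite{Weis}. First I would lift the initial data by applying the analytic semigroup $e^{-tA_{\oq}}$ generated by the Stokes operator with the stress-free boundary condition $\BT(\bW,\sfP)\bn_0 = 0$. The characterization of the trace space $(J_{\oq}, D(A_{\oq}))_{1-1/p,p}$ as $\dot B^{2(1-1/p)}_{\oq,p} \cap J_{\oq}$, together with maximal regularity of the homogeneous Cauchy problem, yields a contribution $\bW_1$ whose parabolic norms are dominated by $\|\bU_0\|_{\dot B^{2(1-1/p)}_{\oq,p}}$; after subtracting, it suffices to treat $\bU_0 = 0$.

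Next I would construct a divergence lift $\bZ$ with $\dv \bZ = \Gdiv = \dv \bG$ and $\bZ|_{t=0}=0$, for instance by setting $\bZ = \bG + \nabla\varphi$ where $\varphi$ solves a Neumann problem in the half-space adjusted so that $\dv\bZ = \Gdiv$ and the boundary value is reproduced. Its parabolic norms are bounded by $\|\nabla\Gdiv\|_{L_p(L_{\oq})} + \|\pd_t\bG\|_{L_p(L_{\oq})}$, and substituting $\bW - \bW_1 - \bZ$ reduces matters to a solenoidal system with zero initial data and modified right-hand sides $(\bF, \bH)$ controlled by the right side of \eqref{lin:global}. For this reduced problem I would apply the temporal Laplace transform, producing a resolvent equation for the Stokes operator with free boundary; the $\CR$-bounded solution operator constructed in \cite{SS12} via explicit Ukai-type kernels, combined with Weis's theorem, converts uniform $\CR$-bounds of the symbol class into the $L_p(\BR; L_{\oq})$ estimate. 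The boundary data $\bH$ enters through a Poisson-type kernel whose temporal symbol scales like $\sqrt\lambda$, which is why the natural norm is $L_p(\dot W^1_{\oq}) \cap \dot H^{1/2}_p(L_{\oq})$, with the $\dot H^{1/2}_p$ piece encoded isometrically by the operator $\Lambda^{1/2}$ of \eqref{def:lambda}. Uniqueness follows from the a priori estimate and a duality argument in $J_{\oq'}$.

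For the local estimate \eqref{local.2} under the compatibility \eqref{ic:zero}, I would extend $\bF, \Gdiv, \bG, \bH$ by zero outside $(0,T)$; vanishing at $t=0$ prevents jump discontinuities, so the zero extensions remain in the same function spaces with equivalent norms (for the $\dot H^{1/2}_p$ component this uses that functions supported in $[0,T]$ and vanishing at $0$ have a $\dot H^{1/2}_p$-norm independent of $T$). Applying the global estimate to the extension and restricting to $(0,T)$ yields \eqref{local.2} with a $T$-independent constant. The main obstacle lies in the core step: obtaining $\CR$-bounds uniformly down to the origin of the resolvent parameter, since $0$ belongs to the spectrum of the half-space Stokes operator and no spectral gap is available. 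This is handled by working throughout in the homogeneous scales and exploiting the scaling invariance of $\HS$ to produce symbol bounds uniform in $\lambda$; the matching of the half-time-derivative regularity of $\bH$ against the tangential gradient of $\bW$ on $\pd\HS$ is the most delicate interpolation-type requirement and is precisely why the $\dot H^{1/2}_p$ norm of $\bH$ must appear on the right of \eqref{lin:global}.
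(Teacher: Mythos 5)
Your global-in-time argument rests on the same two pillars as the paper's proof: the $\CR$-bounded solution operators of \cite{SS12} for the generalized resolvent problem together with Weis's multiplier theorem, plus the analytic semigroup and real interpolation to absorb $\bU_0$ through $\dot B^{2(1-1/p)}_{\oq,p}$. The difference is organizational: you first lift $\bU_0$ (via the trace-space characterization) and then lift the divergence data by a Neumann-type construction, reducing to a solenoidal problem with zero initial value, whereas the paper feeds the full inhomogeneities directly into the solution operator $\CS(\lambda)$ acting on $F=(\bff,\nabla g,\lambda\bg,\nabla\bh,\lambda^{1/2}\bh)$, obtains the whole-line estimate \eqref{fourier.multiplier} for \eqref{eq:3}, and only afterwards corrects the initial value by $T(t)(\bU_0-\bU(0))$ using \eqref{initial:est*}. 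Your route is viable, but the divergence lift is the one step that needs real work you have not supplied: you must produce $\bZ$ with $\pd_t\bZ,\nabla^2\bZ\in L_p(\BR_+;L_{\oq}(\HS))$ bounded by $\|\nabla\Gdiv\|_{L_p(L_{\oq})}+\|\pd_t\bG\|_{L_p(L_{\oq})}$ in the half-space (a $\nabla\Delta^{-1}$-type construction with boundary conditions), and check it does not spoil the boundary datum. The paper's arrangement avoids this entirely because \cite{SS12} already treats nonzero $(g,\bg,\bh)$ at the resolvent level.

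The genuine gap is in your proof of the local estimate \eqref{local.2}. The compatibility \eqref{ic:zero} gives vanishing only at $t=0$, not at $t=T$, so extending the data by zero outside $(0,T)$ fails at the right endpoint: the zero extension of $\bG$ generically has a jump at $t=T$, hence its time derivative contains a Dirac mass and is not in $L_p((0,\infty);L_{\oq})$; likewise the zero extension of $\bH$ has a jump at $t=T$, which for $p\ge 2$ (the regime relevant here, since later $1/p<\theta/2$) does not even belong to $\dot H^{1/2}_p$ locally, and in any case its norm cannot be bounded by $\|\bH\|_{\dot H^{1/2}_p((0,T);L_{\oq})}$ with a constant independent of $T$. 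Your parenthetical claim that vanishing at $t=0$ prevents jump discontinuities addresses the wrong endpoint. The paper's device is the reflection extension $e_T[f]$, equal to $f(t)$ on $(0,T)$, $f(2T-t)$ on $(T,2T)$, and zero elsewhere: it is continuous at $t=T$, doubles the $L_p$, $\dot W^1_p$ and $\dot H^{1/2}_p$ norms at most, and additionally lets one control $\|\bU(0)\|_{B^{2(1-1/p)}_{\oq,p}(\HS)}$ by the trace theorem, which is how the $T$-independent constant in \eqref{local.2} is obtained. With that replacement (and the ensuing estimate of $\bU(0)$), your local-in-time step can be repaired.
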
 
The above theorem belongs nowadays to the classical results. They can be  found in \cite{SS12}. For the reader's convenience we recall the main ideas in the Appendix \ref{A1}.

The second result gives time-weighted maximal regularity for \eqref{eq:1}.
In order to formulate it, for $\Omega \subset \R^d$ we define
\begin{equation}\label{right.1}
\begin{aligned}
\CF^1_q(T,\Omega)&= \|(1+t)(\bF, \nabla \Gdiv, \pd_t \bG, \nabla \bH)\|_{L_p((0, T); L_q(\Omega))} + \|(1+t)\bH\|_{\dot H^{1/2}_p((0, T); L_q(\Omega))}\\
&+ \|\bG\|_{L_p(0, T); L_q(\Omega))},\\[3pt]
\CF^2_q(T,\Omega) & =\|(1+t)(\bF, \nabla \Gdiv, \pd_t \bG, \nabla \bH)\|_{L_p((0, T); L_{q}(\Omega))}
+ \|(1+t)\bH\|_{\dot H^{1/2}_p((0, T); L_q(\Omega))}.
%\\
%\CF^0_q & =\|(\bF, \nabla G, \pd_t \bG, \nabla \bH)\|_{L_p((0, T), L_{q_1}(\HS))}
%+ \|\pd_t(G, \bH)\|_{L_p((0, T), \dot W^{-1}_{q_1}(\HS))}
\end{aligned}\end{equation}
Then we have
\begin{thm}\label{thm:weight} Let $1 <p,  \oq < \infty$.
Let $1<T\leq +\infty$ and let $\bW$ and $\sfP$ be solutions 
to equations \eqref{eq:1}.  Then, there holds
\begin{equation} \label{weight.global.1}
\|\bW\|_{L_\infty((0, T), L_{\oq}(\HS))}  \leq C(\|\bU_0\|_{L_{\oq}(\HS)}
+ \CF^2_{\oq}(T,\HS)).
\end{equation}
Moreover, let 
\begin{equation} \label{c:weight}
1 < \oqq <\oq, \quad \bar \theta = d(1/{\oqq}-1/{\oq}), \quad 1/p < \bar \theta/2.  
\end{equation}
Then, there holds
\begin{align} \label{weight.global.2}
&\|(1+t)(\pd_t, \nabla^2)\bW\|_{L_p((0, T); L_{\oq}(\HS))}
\leq C\big( \|\bU_0\|_{B^{2(1-1/p)}_{\oq,p}(\HS)} +\|\bU_0\|_{L_{\oqq}(\HS)} \\
& \hskip9cm
+ \CF^1_{\oq}(T,\HS) + \CF^2_{\oqq}(T,\HS) \big), \nonumber \\[3pt]
&\|\bW\|_{L_p((0, T); L_{\oq}(\HS))} \label{weight.global.3}
\leq C\big(\|\bU_0\|_{L_{\oq}(\HS)} + \|\bU_0\|_{L_{\oqq}(\HS)} + \CF^2_{\oq}(T,\HS) + \CF^2_{\oqq}(T,\HS) \big).
\end{align}
\end{thm}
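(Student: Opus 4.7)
The plan is to combine the $L_p$-$L_q$ maximal regularity result of Theorem \ref{thm:max} with algebraic decay estimates for the Stokes semigroup in the half-space. The guiding heuristic is that the Sobolev-type relation $\bar\theta = d(1/\oqq - 1/\oq)$ provides a decay rate $t^{-\bar\theta/2}$ for the semigroup acting from $L_{\oqq}$ into $L_{\oq}$, and the assumption $1/p < \bar\theta/2$ is precisely what is needed to make the weight $(1+t)$ integrable in $L_p$ against this decay on $(1,\infty)$.

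First I would decompose $\bW = \bW_I + \bW_F$, where $\bW_I$ solves the homogeneous problem with initial data $\bU_0$ and zero forcing, while $\bW_F$ carries $\bF$, $\Gdiv$, $\bG$, and $\bH$. For $\bW_I$, the decay bounds
$$
\|\bW_I(t)\|_{L_{\oq}(\HS)} \lesssim t^{-\bar\theta/2}\|\bU_0\|_{L_{\oqq}(\HS)}, \qquad \|\nabla^2 \bW_I(t)\|_{L_{\oq}(\HS)} \lesssim t^{-1-\bar\theta/2}\|\bU_0\|_{L_{\oqq}(\HS)},
$$
follow from the resolvent representation used in \cite{SS12}. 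For $\bW_F$, a Duhamel-type representation based on the same explicit solution formula yields analogous convolution estimates, in which the decay of the kernel is paired against the $L_{\oqq}$ norm of the forcing — precisely the quantity controlled by $\CF^2_{\oqq}(T,\HS)$.

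To prove \eqref{weight.global.2}, I split $(0,T) = (0,1) \cup (1,T)$. On $(0,1)$ the weight $(1+t)$ is uniformly bounded, so Theorem \ref{thm:max} applied with the data grouped into $\CF^1_{\oq}(T,\HS)$ gives the bound directly. On $(1,T)$ I use decay: the initial-data contribution produces
$$
\int_1^T (1+t)^p \|\nabla^2 \bW_I(t)\|_{L_{\oq}}^p \, dt \lesssim \|\bU_0\|_{L_{\oqq}}^p \int_1^T t^{-p\bar\theta/2}\, dt < \infty,
$$
thanks to $p\bar\theta/2 > 1$; the forcing contribution is estimated via a Minkowski-type convolution argument combining the kernel decay with the weighted quantity $\CF^2_{\oqq}(T,\HS)$, where near-diagonal singularities $s\approx t$ are controlled by a shifted application of Theorem \ref{thm:max} on a unit time window. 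The bound \eqref{weight.global.1} follows analogously but more easily, since no weighted time integration is involved; only the uniform boundedness of the semigroup on $L_{\oq}$ and a direct time integration of the right-hand side are needed. Estimate \eqref{weight.global.3} is then obtained by patching the $L_\infty L_{\oq}$ bound on $(0,1)$ with the decay-driven $L_p L_{\oq}$ bound on $(1,T)$, once more exploiting $1/p < \bar\theta/2$.

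The principal technical obstacle lies in carrying the nonzero boundary data $\bH$, the compatibility term $\Gdiv$, and the divergence source $\bG$ through the decay machinery, since the Stokes semigroup is naturally defined only on the solenoidal zero-boundary subspace. The remedy is to work directly with the explicit solution formulas of Shibata-Shimizu \cite{SS12}, which give separate kernel representations for each inhomogeneous source, each producing decay estimates with the appropriate integrability exponent. In particular, the $\dot H^{1/2}_p$ regularity of $\bH$ is accommodated via the $\Lambda^{1/2}$ operator of \eqref{def:lambda} and the bound \eqref{half}, which convert fractional time regularity of $(1+t)\bH$ into the spatial $L_{\oqq}$ control aligned with the weighted estimates.
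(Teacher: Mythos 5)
Your outline for \eqref{weight.global.1} and \eqref{weight.global.3} is essentially the paper's argument (uniform boundedness of the solution operator on $L_{\oq}$, the $t^{-\bar\theta/2}$ decay from $L_{\oqq}$ to $L_{\oq}$ obtained by Gagliardo--Nirenberg from the resolvent bound, the splitting $\int_0^{t/2}+\int_{t/2}^t$, and $p\bar\theta/2>1$). The genuine divergence, and the genuine gap, is in how you propose to obtain the weighted estimate \eqref{weight.global.2}. You want to bound $(1+t)(\pd_t,\nabla^2)\bW$ directly through decay of the solution kernels: this requires (a) $L_{\oqq}\to L_{\oq}$ decay of \emph{second derivatives} of the inhomogeneous solution operator, of order $t^{-1-\bar\theta/2}$, including the contributions of $\Gdiv$, $\bG$ and the boundary datum $\bH$ measured in $\dot H^{1/2}_p$ in time; and (b) a summable treatment of the near-diagonal part of the Duhamel integral. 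Neither is supplied by the inputs you cite. The $\CR$-bounded resolvent calculus of \cite{SS12} gives the uniform bound \eqref{bound:1} and, by interpolation, zeroth- and first-order decay, but second-derivative decay of the solution operator acting on the full data vector $(\bF,\nabla\Gdiv,\pd_t\bG,\nabla\bH,\Lambda^{1/2}\bH)$ is a strictly stronger statement that the paper deliberately never uses. Moreover, your ``shifted application of Theorem \ref{thm:max} on a unit time window'' needs an initial value at each window start $t_j$, i.e. control of $(1+t_j)\|\bW_F(t_j)\|_{B^{2(1-1/p)}_{\oq,p}}$ in $\ell_p$ over the windows; that is essentially the weighted trace estimate you are in the process of proving, so as written the argument is circular unless you first split off the tail $\int_0^{t_j-1}$ and estimate its second derivatives by kernel decay --- which sends you back to (a).

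The paper sidesteps both issues with a simpler maneuver: multiply the system by $(1+t)$, observe that $((1+t)\bW,(1+t)\sfP)$ solves the Stokes problem \eqref{weight:1} with data $(1+t)\bF-\bW$, $(1+t)\Gdiv$, $(1+t)\bG$, $(1+t)\bH$, and apply the unweighted maximal regularity estimate \eqref{local.2} \emph{once}, globally in time. The only price is the lower-order term $\|\bW\|_{L_p((0,T);L_{\oq})}$ on the right of \eqref{maxdec.1}, and this zeroth-order quantity is exactly what the decay machinery (your steps for \eqref{weight.global.1} and \eqref{weight.global.3}) controls, with no need for decay of $\nabla^2$ of any solution operator and no window summation. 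If you want to rescue your route, you must either prove the second-derivative decay estimates for the inhomogeneous problem (a substantial addition) or adopt the weight-multiplication argument; as it stands, the passage from kernel decay to the weighted $L_p$ bound on $(\pd_t,\nabla^2)\bW$ is not justified.
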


\subsubsection*{Weighted estimates - proof of Theorem \ref{thm:weight}}

In order to prove Theorem \ref{thm:weight} we consider
$(1+t)\bW$ and $(1+t)\sfP$.  Then , for any $T \geq 1$ we have
\begin{equation}\label{weight:1}\left\{\begin{aligned}
\pd_t(1+t)\bW - \dv\BT((1+t)\bW,  (1+t)\sfP) = (1+t)\bF- \bW
&&\quad&\text{in $\HS\times(0, T)$}, \\
\dv(1+t)\bW = (1+t)\Gdiv = \dv((1+t)\bG)&&\quad&\text{in $\HS\times(0, T)$}, \\
\BT((1+t)\bW, (1+t)\sfP)\bn_0 = (1+t)\bH&&\quad&\text{on 
$\pd\HS\times(0, T)$},\\
\bW|_{t=0} = \bU_0 &&\quad&\text{on 
$\HS$}.
\end{aligned}\right.\end{equation} 
Let 
\begin{align}
F(t) &= (e_T[\bF], \nabla e_T[\Gdiv], \pd_t e_T[\bG], \Lambda^{1/2}
e_T[\bH],\nabla e_T[\bH]), \label{def:F} \\
F_t(t) &=((1+t)e_T[\bF], (1+t)\nabla e_T[\Gdiv], 
\pd_t((1+t)e_T[\bG], \label{def:Ft} \nonumber \\
&\qquad \Lambda^{1/2}((1+t)e_T[\bH]),  (1+t)\nabla e_T[\bH]),
\end{align}
and then by \eqref{local.2} we have
\begin{equation}\label{maxdec.1}\begin{aligned}
&\|(1+t)(\pd_t, \nabla^2)\bW\|_{L_p((0, T), L_{\oq}(\BR_+))}
+\|(1+t)\nabla \sfP\|_{L_p((0, T), L_{\oq}(\HS))}\\
&\quad \leq C(\|\bU_0\|_{B^{2(1-1/p)}_{\oq,p}(\HS)}
+ \|(1+t)(F, \nabla \Gdiv, \pd_t \bG, \nabla \bH, \Lambda^{1/2}\bH)\|_{L_p((0, T); L_{\oq}(\HS))} 
\\
&\hskip11cm + \|\bW\|_{L_p((0, T), L_{\oq}(\HS))})\\
&\quad \leq C(\|\bU_0\|_{B^{2(1-1/p)}_{\oq,p}(\HS)}
+ \|(1+t)(F, \nabla \Gdiv, \pd_t \bG, \nabla \bH)\|_{L_p((0, T); L_{\oq}(\HS))} + \|\bW\|_{L_p((0, T), L_{\oq}(\HS))})\\
&\quad +\|(1+t)\bH\|_{\dot H^{1/2}_p((0, T); L_{\oq}(\HS))}.
\end{aligned}
\end{equation}
Let
$$\bU(t) = \int^t_{-\infty} T(t-s)F(s)\,ds,$$
where $T(\cdot)$ is the analytic semigroup introduced in Appendix \ref{A1}. Then  $$\bW = T(t)(\bU_0-\bU(0)) + \bU.$$
Note that thanks to the limit regularity of kind of $W^{2,1}_p$ we are able to construct such extension  of the RHS such that $\bU(0) \equiv 0$. In other words, $\bF(s) \equiv 0$ for $s \leq 0$, see \cite{MuZaj}.

% Let $\theta = d(1/q_1-1/q)$ 
%with $1 < q_1 < q$.  
We shall show the following lemma.
\begin{lem}  Let $1 < \oq < \infty$ and $1 < p < \infty$. Assume moreover 
$F \equiv 0$ for $t<0$.
Then, for all $0<T \leq \infty$ we have
%{\color{blue} TP: It seems we need to assume $F=0$ for $t < 0$, which is not a problem}
\begin{equation}\label{weight:5}
\|\bU\|_{L_\infty((0, T); L_{\oq}(\HS))} \leq C\|(1+t)F\|_{L_p((0, T); L_{\oq}(\HS))}.
\end{equation}
\par
Moreover, if $p,\oqq,\oq,\bar \theta$ satisfy \eqref{c:weight},
then we have
%\begin{align}\label{weight:4}
%\|\bU\|_{L_p((0,T); L_{\oq}(\HS))} & \leq C(\|(1+|s|)F\|_{L_p((0,T); L_{\oqq}(\HS))}
%+ \|(1+|s|)F\|_{L_p((0,T); L_{\oq}(\HS))}.
%\end{align}
%Moreover,   
\begin{equation} \label{weight:3}
\|\bU\|_{L_p((1, T); L_{\oq}(\HS))} \leq C\|(1+t)F\|_{L_p((\frac{1}{2},T); L_{\oqq}(\HS))},
\end{equation}
and   
\begin{equation} \label{weight:3b}
\|\bU\|_{L_p((0,1); L_{\oq}(\HS))} \leq C\|F\|_{L_p((0,1); L_{\oq}(\HS))}.
\end{equation}
%provided $1/p <\theta/2$. 
\end{lem}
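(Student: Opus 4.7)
My starting point is the smoothing/decay estimates enjoyed by the Stokes semigroup $T(t)$ on $\HS$, namely the contraction bound $\|T(t)f\|_{L_{\oq}(\HS)}\leq C\|f\|_{L_{\oq}(\HS)}$ and the $L_{\oqq}\to L_{\oq}$ decay
\begin{equation*}
\|T(t)f\|_{L_{\oq}(\HS)}\leq C\,t^{-\bar\theta/2}\|f\|_{L_{\oqq}(\HS)},\qquad t>0,
\end{equation*}
with $\bar\theta=d(1/\oqq-1/\oq)$, which is standard in the half-space setting. Since $F\equiv 0$ for $s<0$, the Duhamel integral reduces to $\bU(t)=\int_0^t T(t-s)F(s)\,\d s$. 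All three estimates then follow from elementary splittings combined with H\"older's inequality, with the exponent condition $1/p<\bar\theta/2$ playing the role of the integrability threshold.

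\textit{Estimate \eqref{weight:5}.} Using $L_{\oq}$-contractivity of $T(\cdot)$ and H\"older with dual exponent $p'$,
\begin{equation*}
\|\bU(t)\|_{L_{\oq}}\leq \int_0^t \|F(s)\|_{L_{\oq}}\,\d s
=\int_0^t (1+s)\|F(s)\|_{L_{\oq}}\,(1+s)^{-1}\,\d s
\leq \Bigl(\int_0^\infty (1+s)^{-p'}\,\d s\Bigr)^{1/p'}\|(1+s)F\|_{L_p(L_{\oq})},
\end{equation*}
which is finite since $p'>1$, uniformly in $t\in(0,T)$.

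\textit{Estimate \eqref{weight:3b}.} For $t\in(0,1)$ the decay of $T(t-s)$ is irrelevant; by contractivity and H\"older,
\begin{equation*}
\|\bU(t)\|_{L_{\oq}}\leq \int_0^t\|F(s)\|_{L_{\oq}}\,\d s\leq t^{1/p'}\|F\|_{L_p(0,1;L_{\oq})},
\end{equation*}
and an additional integration in $t\in(0,1)$ closes \eqref{weight:3b}.

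\textit{Estimate \eqref{weight:3}.} For $t\ge 1$ I split $\bU(t)=\int_0^{t/2}+\int_{t/2}^t=:\bU_1(t)+\bU_2(t)$ and apply the $L_{\oqq}\to L_{\oq}$ decay. On $(0,t/2)$ one has $t-s\geq t/2$, so
\begin{equation*}
\|\bU_1(t)\|_{L_{\oq}}\leq C\,t^{-\bar\theta/2}\int_0^{t/2}\|F(s)\|_{L_{\oqq}}\,\d s\leq C\,t^{-\bar\theta/2}\|(1+s)F\|_{L_p(L_{\oqq})},
\end{equation*}
and since the assumption $1/p<\bar\theta/2$ makes $t^{-\bar\theta/2}$ belong to $L_p(1,\infty)$, this piece is controlled. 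For $\bU_2(t)$ I exploit $1+s\sim 1+t$ on $(t/2,t)$ to write
\begin{equation*}
\|\bU_2(t)\|_{L_{\oq}}\leq C\,(1+t)^{-1}\int_{t/2}^t (t-s)^{-\bar\theta/2}(1+s)\|F(s)\|_{L_{\oqq}}\,\d s,
\end{equation*}
which is a convolution with kernel $k(\tau)=\tau^{-\bar\theta/2}\chi_{(0,\infty)}$. Applying H\"older in the window $\tau\in(0,t/2)$ (where $k\in L^{p'}_{\mathrm{loc}}$ once $\bar\theta p'/2<1$; otherwise one rearranges using Young with exponents adjusted so that $(1+t)^{-1}$ absorbs the growth) yields a factor $\leq C(1+t)^{-1}t^{1-\bar\theta/2+1/p'}$ times $\|(1+s)F\|_{L_p(L_{\oqq})}$, whose $L_p$ norm in $t\in(1,T)$ is finite thanks again to $1/p<\bar\theta/2$.

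\textbf{Main obstacle.} The delicate point is the estimate \eqref{weight:3}, specifically controlling the singular convolution kernel $(t-s)^{-\bar\theta/2}$ on the near-diagonal piece $(t/2,t)$ while simultaneously tracking the correct power of the weight $(1+t)$. The decomposition must be engineered so that the polynomial decay produced by the semigroup on the far piece and the polynomial gain from $(1+t)^{-1}$ on the near piece both land in $L_p(1,T;\d t)$; both requirements reduce to the single threshold condition $1/p<\bar\theta/2$ stipulated in \eqref{c:weight}, which I expect to be used tightly in both sub-integrals.
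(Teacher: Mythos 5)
Your proofs of \eqref{weight:5} and \eqref{weight:3b} are exactly the paper's (contractivity of $T(\cdot)$ plus H\"older with the weight $(1+s)^{-1}$), and for \eqref{weight:3} you use the same skeleton: the decay bound \eqref{weight:2}, Duhamel with $F\equiv0$ for $s<0$, and the splitting at $s=t/2$ with $1/p<\bar\theta/2$ handling the far piece. The only genuine divergence is the near-diagonal integral. The paper distributes the kernel as $(t-s)^{-\bar\theta/2}=\bigl((t-s)^{-\bar\theta/2}\bigr)^{1/p'}\bigl((t-s)^{-\bar\theta/2}\bigr)^{1/p}$ inside H\"older and then uses Fubini in $(s,t)$, bounding $s^{p(1-\bar\theta/2)}$ by $(1+s)^p$; this needs only $\bar\theta<2$ and no weight manipulation. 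You instead insert $(1+t)^{-1}(1+s)$ (legitimate, since $1+s\sim1+t$ on $(t/2,t)$) and apply plain H\"older to the kernel, which requires $\bar\theta p'/2<1$. That threshold is in fact implied by $1/p<\bar\theta/2$ whenever $\bar\theta\le1$, which is the only regime in which the decay estimate \eqref{weight:2} is derived (and the regime used later, $\bar\theta=\theta$ or $\bar\theta=1$), so your variant does close there; your parenthetical fallback via ``Young with adjusted exponents'' is too vague to carry the complementary case, but it is not needed. Do note one concrete slip: the H\"older factor is $\bigl(\int_0^{t/2}\tau^{-\bar\theta p'/2}\,d\tau\bigr)^{1/p'}\sim t^{1/p'-\bar\theta/2}$, not $t^{1-\bar\theta/2+1/p'}$. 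With your stated exponent the product $(1+t)^{-1}t^{1-\bar\theta/2+1/p'}\sim t^{1-1/p-\bar\theta/2}$ belongs to $L_p(1,\infty)$ only if $\bar\theta>2$, so the final integrability claim ``thanks again to $1/p<\bar\theta/2$'' is false as written; with the corrected exponent you get $(1+t)^{-1}t^{1/p'-\bar\theta/2}\sim t^{-1/p-\bar\theta/2}$, which lies in $L_p(1,\infty)$ for every admissible $p$, and \eqref{weight:3} follows as in the paper.
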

\begin{proof}
When $0 < \bar \theta \leq 1$,  
by the Gagliardo-Nirenberg inequality and \eqref{bound:1}
we have 
\begin{equation}\label{inter:1}
\|\CS(\lambda)F\|_{L_{\oq}(\HS)} \leq C\| \CS(\lambda)F\|_{L_{\oqq}(\HS)}^{1-\theta} 
\|\nabla \CS(\lambda)F\|_{L_{\oqq}(\HS)}^\theta
\leq C|\lambda|^{-(1-\frac{\theta}{2})}\|F\|_{L_{\oqq}(\HS)},
\end{equation}
where $S(\cdot)$ is the solution operator to the generalized resolvent problem introduced in Appendix \ref{A1}.
By \eqref{inter:1}, we have
\begin{equation}\label{weight:2}
\|T(t)F\|_{L_{\oq}(\HS)} \leq Ct^{-\frac\theta2}\|F\|_{L_{\oqq}(\HS)}.
\end{equation}
Since $\|\CS(\lambda)F\|_{L_{\oq}(\HS)} \leq C\|F\|_{L_{\oq}(\HS)}$, we also have 
\begin{equation}\label{spectral:1}
\|T(t)F\|_{L_{\oq}(\HS)} \leq C\|F\|_{L_{\oq}(\HS)}. 
\end{equation}
Thus, as $F=0$ for $t<0$, we have
$$\|\bU(t)\|_{L_{\oq}(\HS))}
\leq C\int^t_{-\infty} \|F(s)\|_{L_{\oq}(\HS)} \,ds
\leq C\|(1+s)F\|_{L_p((0, t); L_{\oq}(\HS))}. $$
This shows \eqref{weight:5}.
To prove \eqref{weight:3}, 
the estimate is divided as follows: 
$$\|\bU\|_{L_p((1, T); L_{\oq}(\HS))}^p 
\leq C(I + II ),$$
where
\begin{align*}
I &= \int_{1}^T \Bigl(\int^{t/2}_{0}\|T(t-s)F(s)\|_{L_{\oq}(\HS)}\,ds\Bigr)^{p}\,dt, \quad
II &= \int_{1}^T\Bigl(\int^{t}_{t/2}\|T(t-s)F(s)\|_{L_{\oq}(\HS)}\,ds\Bigr)^{p}\,dt.
%III &= \int_{t \geq 1}\Bigl(\int^{t}_{t-1}\|T(t-s)F(s)\|_{L_q(\HS)}\,ds\Bigr)^{p}\,dt,
\end{align*}
For $I$, using the fact that $(t-s) \geq t/2$ for $s < t/2$, we have
\begin{align*}
I & \leq C\int_{1}^T\Bigl(\int^{t/2}_{0}(t-s)^{-\frac{\bar\theta}{2}}\|F(s)\|_{L_{\oqq}(\HS)}\,ds\Bigr)^p\,
\d t 
\\
& \leq C\int_{1}^T t^{-\frac{p\bar\theta}{2}}
\Bigl(\int_{0}^{t/2}(1+s)^{-1}(1+s)\|F(s)\|_{L_{\oqq}(\HS)}\,ds
\Bigr)^p\,dt\\
&\leq C_p\|(1+s)F(s)\|_{L_p((0,T); L_{\oqq}(\HS))}^p.
\end{align*}
Here, we have used $1/p < \bar\theta/2$. \par
For $II$, by H\"older's inequality
\begin{align*}
II & \leq C\int_{1}^T\Bigl(\int^t_{t/2}(t-s)^{-\frac{\bar\theta}{2}}
\|F(s)\|_{L_{\oqq}(\HS)}\,ds\Bigr)^p\,
\d t 
\\
& \leq C\int_{1}^T\Bigl(\Bigl(\int^t_{t/2}(t-s)^{-\frac{\bar\theta}{2}}\,ds\Bigr)^{p/p'}
\int^{t}_{t/2}(t-s)^{-\frac{\bar\theta}{2}}\|F(s)\|_{L_{\oqq}}^p\,ds\Bigr)\d t \\
&\leq  C\int_{1}^T \Bigl(\int^{t}_{t/2} t^{\frac{p}{p'}\left(1-\frac{\bar\theta}{2}\right)} 
(t-s)^{-\frac{\bar\theta}{2}}\|F(s)\|_{L_{\oqq}}^p\,\d s\Bigr)\,\d t \\
& \leq  C\int_{1/2}^T \Bigl(\int^{2s}_{s} t^{\frac{p}{p'}\left(1-\frac{\bar\theta}{2}\right)} 
(t-s)^{-\frac{\bar\theta}{2}}\, \d t\Bigr)\, \|F(s)\|_{L_{\oqq}}^p\,\d s\\
& \leq C\int_{1/2}^T s^{1-\frac{\bar\theta}{2}+(p-1)\left(1-\frac{\bar\theta}{2}\right)}\|F(s)\|_{L_{\oqq}}^p\,\d s\\
& \leq C\int_{\frac{1}{2}}^T (1+s)^p\|F(s)\|_{L_{\oqq}}^p\,\d s.
\end{align*} 
Summing up, we have obtained \eqref{weight:3}. 
\par
Next, since $\|T(t-s)F(s)\|_{L_{\oq}(\HS)} \leq C\|F(s)\|_{L_{\oq}(\HS)}$, we have
%\allowdisplaybreaks
%\begin{align*}
%&\|\bU\|^p_{L_p((0,1), L_{\oq}(\HS))} \leq \int^1_{0}\Bigl(\int^t_{0}
%\|T(t-s)F(s)\|_{L_{\oq}(\HS)}\,ds\Bigr)^p\,dt \\
%&\quad \leq C\int^1_0\Bigl(\int^t_{0}\|F(s)\|_{L_{\oq}(\HS)}\,ds\Bigr)^p\,dt\\
%&\quad \leq C\int^1_0\int^T_{0}((1+|s|)\|F(s)\|_{L_{\oq}(\HS)})^p\,ds\\
%&\quad \leq C\|(1+|t|)F\|^p_{L_p((0,T); L_{\oq}(\HS))},
%\end{align*}
%which, combined with \eqref{weight:3}, implies \eqref{weight:4}.
%Finally, we also have 
\begin{align*}
&\|\bU\|^p_{L_p((0,1); L_{\oq}(\HS))} \leq \int^1_{0}\Bigl(\int^t_{0}
\|T(t-s)F(s)\|_{L_{\oq}(\HS)}\,ds\Bigr)^p\,dt \\
&\quad \leq C\int^1_0\Bigl(\int^t_{0}\|F(s)\|_{L_{\oq}(\HS)}\,ds\Bigr)^p\,dt
\leq C\|F\|^p_{L_p((0,1); L_{\oq}(\HS)},
\end{align*}
which proves \eqref{weight:3b}. This completes the proof.
\end{proof}

Now, we are in a position to complete the proof of Theorem \ref{thm:weight}. For this purpose we consider the initial value problem \eqref{eq:4} with replacing $\bg$ with $\bU_0-\bU(0)$,
 and then 
$\bV(t) = T(t)(\bU_0-\bU(0))$ and $Q=K(T(t)(\bU_0-\bU(0))$, 
where the solution operator $K$ is defined in \eqref{def:K}.
From \eqref{bound:1} and \eqref{weight:2}  it follows that 
\begin{align}
\|\bV(t)\|_{L_{\oq}(\HS)} &\leq C\|\bU_0-\bU(0)\|_{L_{\oq}(\HS)}, 
\label{semi:2}\\
\|\bV(t)\|_{L_{\oq}(\HS)} &\leq Ct^{-\frac{\bar \theta}{2}}\|\bU_0-\bU(0)\|_{L_{\oqq}(\HS)},
\label{semi:3}
\end{align}
where $\bar \theta$ is defined in \eqref{c:weight}.
Combining \eqref{semi:2} and  \eqref{weight:5} yields
\begin{equation}\label{weight:6}
\|\bW\|_{L_\infty((0, T); L_{\oq}(\HS))} \leq C(\|\bU_0\|_{L_{\oq}(\HS)}+\|\bU(0)\|_{L_{\oq}(\HS)}
+ \|(1+|t|)F\|_{L_p((0,T); L_{\oq}(\HS))}).
\end{equation}
for all $0<T\leq +\infty$.  Since $p \bar\theta/2 > 1$, from \eqref{semi:3} it follows that 
$$\|\bV\|_{L_p((1, T); L_{\oq}(\HS))} \leq C(\|\bU_0\|_{L_{\oqq}(\HS)}
+ \|\bU(0)\|_{L_{\oqq}(\HS)}),
$$
while from \eqref{semi:2} it follows that 
$$\|\bV\|_{L_p((0, 1); L_{\oq}(\HS))} \leq C(\|\bU_0\|_{L_q(\HS)}+\|\bU(0)\|_{L_{\oq}(\HS)}.$$
Combining these two estimates with \eqref{weight:3} and \eqref{weight:3b} yields
\begin{equation}\label{weight:7}\begin{aligned}
\|\bW\|_{L_p((0, T), L_{\oq}(\HS))}
&\leq C(\|\bU_0\|_{L_{\oq}(\HS)} + \|\bU_0\|_{L_{\oqq}(\HS)}
+ \|\bU(0)\|_{L_{\oq}(\HS)} + \|\bU(0)\|_{L_{\oqq}(\HS)}\\
&\quad + \|(1+t)F\|_{L_p((\frac{1}{2},T); L_{\oqq}(\HS))}+\|F\|_{L_p((0,1); L_{\oq}(\HS))}
\end{aligned}\end{equation}
for $0<T\leq +\infty$. To complete the proof we observe that for $r \in \{\oq, \oqq\}$, we observe that
\begin{align*}
\|\bU(0)\|_{L_r(\HS)}&\leq \int^0_{-\infty}\|T(-s)F(s)\|_{L_r(\HS)}\,ds 
\leq C\int^0_{-\infty}\|F(s)\|_{L_r(\HS)}\,ds  \\
&\leq C\|(1+|s|)F\|_{L_p((-\infty, 0); L_r(\HS))}.
\end{align*}
However, this part can be neglected by the suitable choice of the extension of the RHS. Thus, from \eqref{weight:6} and \eqref{weight:7}, we have
\begin{align}\label{weight:8}
\|\bW\|_{L_\infty(0, T); L_{\oq}(\HS))} &\leq C(\|\bU_0\|_{L_{\oq}(\HS)}
+ \|(1+|t|)F\|_{L_p(\BR; L_{\oq}(\HS))}), \\
\|\bW\|_{L_p((0, T); L_{\oq}(\HS))}
&\leq C(\|\bU_0\|_{L_{\oq}(\HS)} + \|\bU_0\|_{L_{\oqq}(\HS)}\nonumber \\
&\quad + \|(1+t)F\|_{L_p((\frac{1}{2},T); L_{\oqq}(\HS))}+\|F\|_{L_p((0,1); L_{\oq}(\HS))}
\label{weight:9}
\end{align}
Using the definitions \eqref{def:F} and \eqref{def:Ft}, we have the following estimates:
\begin{align*}
&\|F_t\|_{L_p(\BR; L_{\oq}(\HS))} 
\leq C(\|(1+t)(\bF, \nabla \Gdiv, \pd_t \bG, \nabla \bH)\|_{L_p((0, T); L_{\oq}(\HS))}\\
&+ \|\bG\|_{L_p(0, T); L_{\oq}(\HS))} + \|\Lambda^{1/2}(1+t)\bH\|_{L_p((0,T); L_{\oq}(\HS))},
\\[5pt]
&\|(1+t)F\|_{L_p(\BR; L_{\oqq}(\HS))} 
\leq C(\|(1+t)(\bF, \nabla \Gdiv, \pd_t \bG, \nabla \bH)\|_{L_p((0, T), L_{\oqq}(\HS))}\\
&+\|\Lambda^{1/2}(1+t)\bH\|_{L_p((0,T); L_{\oq}(\HS))}.
\end{align*}
Then, combining \eqref{maxdec.1}, \eqref{weight:8}, 
and \eqref{weight:9}, we obtain \eqref{weight.global.1} and \eqref{weight.global.2}. The proof of Theorem \ref{thm:weight} is complete.
\qed

\section{Linear theory in the perturbed half-space} \label{sec:lin:pert}

In this part we want to generalize Theorems \ref{thm:max} and \ref{thm:weight} onto the case the domain is a perturbation of the half space defined in \eqref{omega}.
Therefore, we consider the Stokes problem:
\begin{equation}\label{eq:1:pert}\left\{\begin{aligned}
\pd_t\bW - \dv\BT(\bW, \sfP) = \bF&&\quad&\text{in $\Omega_0 \times\BR_+$}, \\
\dv\bW = G = \dv\bG&&\quad&\text{in $\Omega_0 \times\BR_+$}, \\
\BT(\bW, \sfP) \bn_0=\bH &&\quad&\text{on $\pd\Omega_0 \times\BR_+$}, \\
\bW|_{t=0} = \bU_0&&\quad&\text{in $\Omega_0$},
\end{aligned}\right.
\end{equation}
where $\bn_0$ is the outer normal vector to the boundary $\partial \Omega_0$. Since our analysis is done in unbounded domains, we are required to 
fix the factors of integrability. 
Let $d \geq 2$ and assume \eqref{exp:1}, which implies \eqref{exp:2}.
%\begin{equation}\label{exp:1}
%\frac{1}{q_0} = \frac{1+2\theta}{d}, 
%\quad \frac{1}{q_1} = \frac{1+\theta}{d}, \quad  \frac{1}{q_2} = \frac{\theta}{d}.
%\end{equation} 
%In particular, we have
%\begin{equation}\label{exp:2}
%\dfrac{1}{q_0} = \dfrac{1}{q_1} + \dfrac{1}{q_2}, \quad 
%d(\frac{1}{q_1}-\frac{1}{q_2}) = 1, \quad 
%1 < q_0 < q_1 < d < q_2.
%\end{equation}

We start with recalling a general imbedding estimate which can be of independent interest. 
\begin{lem} 
Assume $q_1,q_2,\alpha$ satisfy $q_1<d<q_2$ and 
\begin{equation} \label{c:alpha}
\frac{\alpha}{q_1} + \frac{1-\alpha}{q_2} = \frac{1}{d}, \quad \alpha \in (0, 1).
\end{equation}
Then for every $\bv \in \dW^2_{q_1}\cap \dW^2_{q_2}$ we have 
\begin{equation}\label{sup:1}
\|\nabla \bv\|_{L_\infty} \leq C\|\nabla^2\bv\|_{L_{q_1}}^{1-\alpha}\|\nabla^2 \bv\|_{L_{q_2}}^\alpha
\end{equation}
and 
\begin{equation} \label{sup:L1}
\|\nabla \bv\|_{L_1((0,T);L_\infty)}\leq C\sum_{i=1}^2\| (1+t)\nabla^2 \bv \|_{L_p((0,T);L_{q_i})}.
\end{equation}
\end{lem}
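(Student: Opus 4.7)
My plan is to derive \eqref{sup:1} as a scale-invariant Gagliardo--Nirenberg--Morrey interpolation, and then to deduce \eqref{sup:L1} by a time-weighted H\"older inequality. For \eqref{sup:1} the starting point is the pointwise representation
$$
|\nabla \bv(x)|\leq C\int_{\R^d}\frac{|\nabla^2 \bv(y)|}{|x-y|^{d-1}}\,dy,
$$
obtained from the Newtonian/Riesz-potential formula after extending $\bv$ from $\Omega_0$ to $\R^d$ by a Stein-type extension preserving $\dW^2_{q_1}\cap\dW^2_{q_2}$. Splitting the integral at radius $R>0$, I would apply H\"older on the inner region $\{|y-x|<R\}$ with $L_{q_2}$ (legal since $q_2>d$ makes the kernel $|z|^{-(d-1)}$ locally integrable to the dual power $q_2'$), getting a bound of order $R^{1-d/q_2}\|\nabla^2\bv\|_{L_{q_2}}$; and on the outer region $\{|y-x|>R\}$ with $L_{q_1}$ (legal since $q_1<d$ makes $|z|^{-(d-1)}$ integrable at infinity to the dual power $q_1'$), getting a bound of order $R^{1-d/q_1}\|\nabla^2\bv\|_{L_{q_1}}$. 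Minimising the sum in $R$ produces \eqref{sup:1}, the interpolation exponents being automatically prescribed by the dilation homogeneity encoded in \eqref{c:alpha}.

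For \eqref{sup:L1}, I would apply \eqref{sup:1} pointwise in time and factor the weight $(1+t)$ evenly across the two $L_{q_i}$-norms,
$$
\|\nabla \bv(t)\|_{L_\infty}\leq \frac{C}{1+t}\bigl[(1+t)\|\nabla^2\bv(t)\|_{L_{q_1}}\bigr]^{1-\alpha}\bigl[(1+t)\|\nabla^2\bv(t)\|_{L_{q_2}}\bigr]^{\alpha},
$$
and then integrate in $t$ using H\"older with the three exponents $\bigl(p',\,p/(1-\alpha),\,p/\alpha\bigr)$, whose reciprocals sum to $1$. This yields
$$
\int_0^T\|\nabla \bv\|_{L_\infty}\,dt\leq C\bigl\|(1+t)^{-1}\bigr\|_{L_{p'}(0,T)}\,\bigl\|(1+t)\nabla^2\bv\bigr\|_{L_p((0,T);L_{q_1})}^{1-\alpha}\bigl\|(1+t)\nabla^2\bv\bigr\|_{L_p((0,T);L_{q_2})}^{\alpha}.
$$
Since $p>1$ the first factor is bounded uniformly in $T\leq+\infty$, and Young's inequality $a^{1-\alpha}b^{\alpha}\leq a+b$ absorbs the product into the sum on the right-hand side of \eqref{sup:L1}, completing the estimate.

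The step I expect to require the most care is the integral representation on the perturbed half-space $\Omega_0$ for functions lying only in the \emph{homogeneous} spaces $\dW^2_{q_i}$. One must choose an extension to $\R^d$ that is simultaneously bounded on $\dW^2_{q_1}$ and $\dW^2_{q_2}$; since $\bv$ is determined only up to an affine function by these seminorms, the cleanest route is to quotient out affine functions from $\bv$ (which does not affect $\nabla\bv$) and then invoke a homogeneous Stein extension, combined with the smallness of $h$ in \eqref{omega} to flatten the boundary if needed. Once this is in place, the splitting-optimisation argument is the textbook instance of real interpolation between the Sobolev embedding $\dW^2_{q_1}\hookrightarrow\dW^1_{q_1^{\ast}}$ (with $1/q_1^{\ast}=1/q_1-1/d$) and the Morrey embedding $\dW^2_{q_2}\hookrightarrow\dW^1_{\infty}$, and \eqref{sup:L1} follows by the time-H\"older chain outlined above.
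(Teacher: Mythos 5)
Your argument is correct, but it reaches \eqref{sup:1} by a genuinely different route than the paper. The paper stays entirely inside Besov-space theory: it writes $L_\infty\supset \dot B^{d/q_2}_{q_2,1}=(\dot B^1_{q_2,\infty},\dot B^{1-d(1/q_1-1/q_2)}_{q_2,\infty})_{\alpha,1}$, uses the embeddings $\dot B^1_{q_1,\infty}\subset \dot B^{1-d(1/q_1-1/q_2)}_{q_2,\infty}$ and $\dot W^1_q\subset\dot B^1_{q,\infty}$, and reads off the product bound from the real-interpolation inequality; your Riesz-potential splitting at radius $R$ (a Hedberg-type argument) is a more elementary, self-contained proof of the same Gagliardo--Nirenberg--Morrey inequality, at the price of needing a pointwise representation of $\nabla\bv$ on $\R^d$, hence an extension bounded simultaneously on $\dot W^2_{q_1}$ and $\dot W^2_{q_2}$. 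That price is modest here: in the paper the lemma is only invoked after the diffeomorphism onto $\HS$, where a higher-order reflection suffices, so the boundary-flattening step you mention is not really needed. Two points of bookkeeping. First, subtracting an affine function does change $\nabla\bv$ (by a constant), it does not leave it untouched; the honest justification of the representation is a density argument for fields with $\nabla^2\bv\in L_{q_1}\cap L_{q_2}$ together with the remark that for the velocity fields to which the lemma is applied the constant vanishes --- the same ``modulo polynomials'' caveat is implicit in the paper's homogeneous Besov formulation, so this is not a gap specific to your proof. Second, both your optimisation in $R$ and the paper's own interpolation actually produce $\|\nabla\bv\|_{L_\infty}\lesssim\|\nabla^2\bv\|_{L_{q_1}}^{\alpha}\|\nabla^2\bv\|_{L_{q_2}}^{1-\alpha}$ with $\alpha$ as in \eqref{c:alpha}; the display \eqref{sup:1} carries a harmless swap $\alpha\leftrightarrow1-\alpha$ (dilation invariance confirms this), which is a defect of the statement, not of either proof, and is immaterial once Young's inequality is applied. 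Your derivation of \eqref{sup:L1} --- H\"older in time with exponents $(p',\,p/(1-\alpha),\,p/\alpha)$ followed by Young --- is essentially the paper's one-line argument, which simply applies Young first at the level of \eqref{sup:1} and then H\"older with $(1+t)^{-1}\in L_{p'}(0,T)$ uniformly in $T$.
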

\begin{proof}
In order to show these elementary facts we are using the Besov spaces approach and real interpolation theorems.
Since $d/q_2 < 1$, by \eqref{0:3a} we have 
\begin{equation} \label{imbed:3}
L_\infty \supset \dot B^{d/q_2}_{q_2,1} = (\dot B^1_{q_2, \infty}, 
\dot B^{1-d(1/q_1- 1/q_2)}_{q_2, \infty})_{\alpha, 1}
\end{equation}
where $\alpha \in (0, 1)$ is chosen as 
$$\frac{d}{q_2} = (1-\alpha) + \alpha\biggl(1-d\bigl(\frac{1}{q_1}-\frac{1}{q_2}\bigr)\biggr),
%\quad 
%\text{that is} \quad
%\frac{1}{d} = \frac{1-\alpha}{q_2} + \frac{\alpha}{q_1}.
$$
which is equivalent to \eqref{c:alpha}.
Since $q_1 < q_2$, by \eqref{imbed:besov} we have 
$\dot B^{1-d(1/q_1- 1/q_2)}_{q_2, \infty}
\supset \dot B^1_{q_1, \infty},$  %= \dot B^{d(1/q_1 - q_2) 
%+ 1-(1/q_1-1/q_2)}_{q_1,\infty}
which, combined with \eqref{imbed:3}, gives 
$L_\infty \supset (\dB^1_{q_2, \infty}, \dB^1_{q_1, \infty})_{\alpha, 1}. $
From the observation above, for any 
$\bv \in \dot W^2_{q_1} \cap \dot W^2_{q_2}$
we obtain \eqref{sup:1}
provided that $q_1,q_2$ and $\alpha \in (0, 1)$ satisfy \eqref{c:alpha}. 
In the last step we have used the fact that $\dB^1_{q,\infty} \supset \dot W^1_q$. Now, \eqref{sup:L1} is a direct consequence of \eqref{sup:1} and H\"older inequality:
$$
\int_0^T \|\nabla \bv\|_{\infty} dt\leq \int_0^T [(1+t)^{-1}(1+t) \sum_{i=1}^2\|\nabla^2 \bv\|_{q_i}]\,dt \leq C \sum_{i=1}^2 \|(1+t)\nabla^2 \bv\|_{L_p((0,T);L_{q_i})},
$$
which completes the proof.
\end{proof}
The first main result of this section reads 
\begin{thm} \label{thm:max:pert}
Let  $1 < p < \infty$ and let $q_1,q_2$ satisfy the assumptions of Theorem \ref{thm:gwp}. 
Let $\Omega_0$ be defined by (\ref{omega}) with sufficiently small $\epsilon>0$.
Let $\bU_0 \in \bigcap_{i=1}^2 \dot B^{(1-1/p)}_{q_i,p}(\Omega)$ and 
\begin{align*}
\bF &\in \bigcap_{i=1}^2 L_p(\BR_+; L_{q_i}(\Omega_0)^d), \quad 
\Gdiv \in \bigcap_{i=1}^2 L_p(\BR_+; \dW^1_{q_i}(\Omega_0)), \quad \bG \in \bigcap_{i=1}^2 \dW^1_p(\BR_+; L_{q_i}(\Omega_0)^d), \\
 \bH &\in \bigcap_{i=1}^2 \left( L_p(\BR_+; \dW^1_{q_i}(\Omega_0)^d) \cap \dW^{1/2}_p(\BR_+; L_{q_i}(\Omega_0)^d)\right).
\end{align*}
Then, problem \eqref{eq:1:pert} admits unique solutions $\bW$ and $\sfP$ with
$$\bW \in \bigcap_{i=1}^2 \left( L_p(\BR_+; W^2_{q_i}(\Omega_0)^N) \cap  W^1_p(\BR_+; L_{q_i}(\Omega_0)^N))\right),
\quad \nabla \sfP \in \bigcap_{i=1}^2 L_p(\BR_+; L_{q_i}(\Omega_0)^N)$$
satisfying the estimate:
\begin{equation} \label{lin:global:pert}
\begin{aligned} 
&\sum_{i=1}^2\|(\pd_t\bW, \nabla^2\bW, \nabla \sfP)\|_{L_p(\BR_+; L_{q_i}(\Omega_0))} \\
&\quad \leq C\sum_{i=1}^2\big(\|\bU_0\|_{\dot B^{2(1-1/p)}_{q_i,p}(\Omega_0)} 
+ \|(\bF, \nabla \Gdiv, \pd_t\bG, \nabla \bH, \Lambda^{1/2}\bH)\|_{L_p(\BR; L_{q_i}(\Omega_0))}\big).
\end{aligned}
\end{equation}
Moreover, if \eqref{ic:zero} holds, then
%\begin{equation} \label{ic:zero}
%F|_{t=0} =0, G|_{t=0}=0,
%\bG|_{t=0}=0 \;\;{\rm and}\;\; \bH|_{t=0}=0,
%\end{equation} 
for any $T>0$ we have
\begin{equation}\label{local.2:pert}
\begin{aligned}
&\sum_{i=1}^2\|(\pd_t, \nabla^2) \bW\|_{L_p((0, T); L_{q_i}(\Omega_0))}
\leq  C\sum_{i=1}^2\big(\|\bU_0\|_{B^{2(1-1/p)}_{q_i,p}(\Omega_0)} \\
&\qquad +\|(\bF, \nabla \Gdiv, \pd_t \bG, \nabla \bH)\|_{L_p((0, T); L_{q_i}(\Omega_0))}
+ \|\bH\|_{\dot H^{1/2}_p((0, T); L_{q_i}(\Omega_0))}\big).
\end{aligned}
\end{equation}
Here, the constant $C$ is independent of $T$.
\end{thm}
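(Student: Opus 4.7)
The plan is to straighten $\pd\Omega_0$ to $\pd\HS$ by a near-identity diffeomorphism and to recast the resulting system as a small perturbation of the half-space problem \eqref{eq:1}, where Theorem \ref{thm:max} applies. Set $\Phi(y', y_d) = (y', y_d + h(y'))$ with inverse $\Psi(x) = (x', x_d - h(x'))$. The exponent conditions \eqref{require.1}, \eqref{exp:1} give $q_2 > d$ and $1 - 1/p > d/q_2 = \theta$, which together with the embedding $W^{2-1/p}_{q_2}(\R^{d-1}) \hookrightarrow C^1_b(\R^{d-1})$ yield $\|\nabla h\|_{L_\infty} \leq C\epsilon$. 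Thus $\Phi$ is bi-Lipschitz with constants close to $1$ and preserves the relevant Sobolev, Besov and Bessel potential norms up to uniform factors. Setting $\tilde{\bW} := \bW\circ\Phi$, $\tilde{\sfP} := \sfP\circ\Phi$ (and analogously for the data), system \eqref{eq:1:pert} becomes a Stokes problem on $\HS$,
\begin{align*}
\pd_t \tilde{\bW} - \dv\BT(\tilde{\bW}, \tilde{\sfP}) &= \tilde{\bF} + \CR_1(h)(\tilde{\bW}, \tilde{\sfP}) && \text{in $\HS \times \R_+$}, \\
\dv\tilde{\bW} &= \tilde{G} + \CR_2(h)\tilde{\bW} = \dv\bigl(\tilde{\bG} + \CR_2'(h)\tilde{\bW}\bigr) && \text{in $\HS \times \R_+$}, \\
\BT(\tilde{\bW}, \tilde{\sfP})\bn_0 &= \tilde{\bH} + \CR_3(h)(\tilde{\bW}, \tilde{\sfP}) && \text{on $\pd\HS \times \R_+$},
\end{align*}
where each $\CR_j(h)$ is linear in $(\tilde{\bW}, \tilde{\sfP})$, vanishes at $h \equiv 0$, and has $t$-independent coefficients polynomial in $(\nabla h, \nabla^2 h)$.

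Next I would bound these perturbations in the norms demanded by Theorem \ref{thm:max}. Schematically $\CR_1 = a(\nabla h)\nabla^2\tilde{\bW} + b(\nabla h)\nabla\tilde{\sfP} + c(\nabla^2 h)\nabla\tilde{\bW}$, so that
\begin{equation*}
\|\CR_1\|_{L_p(L_{q_i})} \leq C\epsilon \|(\nabla^2\tilde{\bW}, \nabla\tilde{\sfP})\|_{L_p(L_{q_i})} + C\|\nabla^2 h\|_{L_{q_i}}\|\nabla\tilde{\bW}\|_{L_p(L_\infty)},
\end{equation*}
with the last factor controlled by the interpolation inequality \eqref{sup:1} at the exponents $(q_1, q_2)$ of \eqref{exp:1}. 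The divergence perturbations $\CR_2, \CR_2'$ are handled analogously in $L_p(\dot W^1_{q_i})$ and $\dot W^1_p(L_{q_i})$. The boundary term $\CR_3$ contains the main difficulty: in $L_p(\dot W^1_{q_i})$ it is estimated by the same kind of inequality, while in the $\dot H^{1/2}_p(L_{q_i})$ norm in time one exploits that the coefficients of $\CR_3$ depend only on $y$, so that $\Lambda^{1/2}$ commutes with them and
\begin{equation*}
\|\CR_3\|_{\dot H^{1/2}_p(L_{q_i})} \leq C\epsilon \|(\nabla\tilde{\bW}, \tilde{\sfP})|_{\pd\HS}\|_{\dot H^{1/2}_p(L_{q_i})},
\end{equation*}
the right-hand side being dominated by $\|(\pd_t\tilde{\bW}, \nabla^2\tilde{\bW}, \nabla\tilde{\sfP})\|_{L_p(L_{q_i})}$ through the trace-type embedding \eqref{imbed:3}.

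Inserting these bounds into \eqref{lin:global} applied to the flattened system produces a self-improving inequality $\sfN \leq C\,\mathsf{data} + C\epsilon\,\sfN$ for the left-hand side $\sfN$ of \eqref{lin:global:pert}. Taking $\epsilon$ sufficiently small absorbs the $C\epsilon\sfN$ contribution, and pulling back by $\Psi$ yields \eqref{lin:global:pert}; uniqueness follows by applying the same contraction to the difference of two solutions. The local bound \eqref{local.2:pert} is obtained by extending the data by zero across $t = 0$, which is admissible thanks to \eqref{ic:zero}, and by a reflection past $t = T$ preserving $\dot H^{1/2}_p$; the global estimate then applies with a constant independent of $T$.

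The principal obstacle is closing the $\dot H^{1/2}_p(L_{q_i})$ estimate for $\CR_3$ without invoking a boundary trace of $\nabla\tilde{\sfP}$, which would fall outside the regularity class of the solution. This is resolved by the same pressure-free rewriting strategy emphasized after \eqref{def-nonlinear-terms}: the trace of $\tilde{\sfP}$ on $\pd\HS$ is eliminated using the normal component of the boundary condition, so $\CR_3$ can be re-expressed through $\nabla\tilde{\bW}$ and the data $\tilde{\bH}$, both of which enjoy the required $\dot H^{1/2}_p(L_{q_i})$ regularity.
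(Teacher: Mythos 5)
Your overall strategy is the same as the paper's: flatten $\Omega_0$ to $\HS$ by a near-identity diffeomorphism, treat the resulting coordinate terms as a small linear perturbation, apply the half-space result (Theorem \ref{thm:max}) and absorb the $C\epsilon$-contributions. The difference, and the genuine gap, lies in your choice of flattening map. You take the pure shear $\Phi(y',y_d)=(y',y_d+h(y'))$, so the second-order coefficients of your perturbation operators are built from $\nabla'^2 h(y')$. But $h$ is only assumed to lie in $W^{2-1/p}_{q_1}\cap W^{2-1/p}_{q_2}(\R^{d-1})$, so $\nabla'^2 h$ lives in $W^{-1/p}_{q_i}(\R^{d-1})$, not in $L_{q_i}$; moreover a function of $y'$ alone is never in $L_{q_i}(\HS)$, so the key Hölder bound $\|\CR_1\|_{L_p(L_{q_i})}\lesssim \epsilon\|(\nabla^2\widetilde\bW,\nabla\widetilde\sfP)\|_{L_p(L_{q_i})}+\|\nabla^2 h\|_{L_{q_i}}\|\nabla\widetilde\bW\|_{L_p(L_\infty)}$ is vacuous as written: the factor $\|\nabla^2 h\|_{L_{q_i}}$ is not finite, and even formally the product would have to be integrated in $y_d$ over an infinite ray. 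What is needed is a diffeomorphism whose gradient deviates from the identity by a \emph{function on the half-space} that is small in $W^1_{q_1}\cap W^1_{q_2}(\HS)$ — e.g.\ a shear by a Besov-trace extension $Eh\in W^2_{q_1}\cap W^2_{q_2}(\HS)$ of $h$, or, as the paper does, the measure-preserving diffeomorphism of \cite{DM2015} with $\Phi\in W^2_{q_1}\cap W^2_{q_2}$ and $\|\nabla\Phi-\mathrm{Id}\|_{W^1_{q_1}\cap W^1_{q_2}}<C\epsilon$. With such a $\Phi$ your estimates become exactly the paper's \eqref{Phi:q2}--\eqref{Phi:q1} (the paper splits the $q_1$-case with $\|\nabla^2\Phi\|_{L_d}$ and Sobolev embedding rather than $\|\nabla\widetilde\bW\|_{L_\infty}$, but either split works once $\nabla^2\Phi\in L_{q_1}\cap L_{q_2}(\HS)$).

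Two further points. First, the measure-preserving property ($\det D\Phi=1$, which your shear also enjoys, but which you do not exploit) is what lets the paper write the transformed divergence condition in divergence form, $\dv_{\Omega_0}F=\dv(A_{\Omega_0}F)$; this is needed to keep the $\bG$-part of the data in $\dot W^1_p(\BR_+;L_{q_i})$, so it should appear explicitly in your treatment of $\CR_2,\CR_2'$. Second, your handling of the pressure in the transformed boundary condition — eliminating the trace of $\widetilde\sfP$ via the (approximately) normal component before taking the $\dot H^{1/2}_p(L_{q_i})$ norm — is a plausible route, since maximal regularity controls only $\nabla\widetilde\sfP\in L_p(L_{q_i})$ and gives no fractional-in-time control of $\widetilde\sfP$ itself; but as stated it is only an announcement, and the algebra (solving for $\widetilde\sfP|_{\pd\HS}$ from the perturbed normal component and substituting into the tangential ones, with coefficients small in $L_\infty$ so that multiplication commutes with $\Lambda^{1/2}$) needs to be carried out to see that only $\nabla\widetilde\bW$ and $\widetilde\bH$ survive. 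If you repair the flattening map and write out this boundary reduction, the absorption argument and the $T$-independent local estimate via the zero/reflection extension (using \eqref{ic:zero}) go through as you describe.
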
 
\begin{proof}
Let us give a sketch of the proof, focusing on the most restrictive terms. Based of the result from \cite{DM2015} -- Section 2.4, given $\Omega_0$ as assumed, there is a diffeomorphism 
\begin{equation}
    \Phi: \Omega_0 \to \HS
\end{equation}
such that 
\begin{equation} 
    \Phi \in W^2_{q_1}(\Omega_0) \cap W^2_{q_2}(\Omega_0) \quad {\rm and} \; \Phi \;
    \textrm{is a measure preserving map, ie.} \; {\rm det}\{D\Phi\} =1.
\end{equation}
Additionally, due to the smallness of $\epsilon$, there exists a point $x_0\in \partial \Omega_0$ such that 
\begin{equation} \label{prop:phi}
    \Phi(x_0)=0, \quad D\Phi_{x_0}(T_{x_0}(\partial \Omega_0)) = \R^{d-1}, \mbox{ \ 
     and \ } \|\nabla \Phi(x) - Id\|_{(W^{1}_{q_2}\cap W^1_{q_1})(\Omega_0)} < C \epsilon.
\end{equation}
Let us denote 
$$
\tilde \bW(t,y)=\bW(t,\Phi^{-1}(t,y)), \quad \tilde P(t,y)=P(t,\Phi^{-1}(t,y)), \quad \BT_{\Omega}(\tW,\tilde P)=\BT_x (\bW,P)
$$
and 
$$
\dv_\Omega \widetilde {\bf Z} := \dv_x {\bf Z},
$$
where ${\bf Z}$ is either a vector or matrix-valued function and 
$\widetilde {\bf Z}(t,y)={\bf Z}(t,\Phi^{-1}(t,y))$.

Taking the transformation into the halfspace system \eqref{eq:1:pert} takes the form
\begin{equation}\label{Stokes:fixed1}\left\{\begin{aligned}
\pd_t\widetilde\bW - \dv_{\Omega_0}\BT_{\Omega_0}(\widetilde \bW, \widetilde\sfP) = \widetilde \bF&&\quad&\text{in $\HS\times\BR_+$}, \\
\dv_{\Omega_0} \widetilde\bW = \widetilde \Gdiv = \dv_{\Omega_0} \widetilde \bG&&\quad&\text{in $\HS \times\BR_+$}, \\
\BT_{\Omega_0} (\widetilde \bW, \widetilde\sfP) \bn_{0\Omega_0} =\widetilde \bH &&\quad&\text{on $\pd\HS  \times\BR_+$}, \\
\widetilde \bW|_{t=0} = \widetilde \bU_0&&\quad&\text{in $\HS$}.
\end{aligned}\right.
\end{equation}
In order to apply Theorems \ref{thm:max} and \ref{thm:weight}, the system \eqref{Stokes:fixed1} is restated in the following way
\begin{equation}\label{Stokes:fixed2}
\left\{\begin{aligned}
\pd_t\widetilde\bW - \dv \BT(\widetilde \bW, \widetilde\sfP) =
\dv_{\Omega_0}\BT_{\Omega_0}(\widetilde \bW, \widetilde\sfP)-
\dv \BT (\widetilde \bW, \widetilde\sfP)+
\widetilde \bF&&\quad&\text{in $\HS\times\BR_+$}, \\
\dv \widetilde\bW = \dv(Id -A_{\Omega_0}) \widetilde\bW + \dv A_{\Omega_0} \widetilde \bG&&\quad&\text{in $\HS \times\BR_+$}, \\
\BT (\widetilde \bW, \widetilde\sfP) \bn_{0} =
\BT (\widetilde \bW, \widetilde\sfP) \bn_{0}-
\BT_{\Omega_0} (\widetilde \bW, \widetilde\sfP) \bn_{0{\Omega_0}}
\widetilde \bH &&\quad&\text{on $\pd\HS  \times\BR_+$}, \\
\widetilde \bW|_{t=0} = \widetilde \bU_0&&\quad&\text{in $\HS$},
\end{aligned}\right.
\end{equation}
where thanks to measure preserving property of map $\Phi$, for any vector field $F$ we have 
\begin{equation}
    \dv_{\Omega_0} F = \div A_{\Omega_0} F,
\end{equation}
where $A_{\Omega_0}:=D\Phi$, see for instance \cite[Section 1]{DMP}.
In order to close the estimates we need to estimate the rhs of \eqref{Stokes:fixed2}. Since the map $\Phi$ is time independent, the issue is relatively easy, so we just take into account the main term from the  momentum equation. Note that
\begin{equation} \label{div:pert}
\begin{aligned}
    \dv_\Omega\BT_\Omega(\widetilde \bW, \widetilde\sfP)-
\dv \BT (\widetilde \bW, \widetilde\sfP)= &
\dv_\Omega\left( \BD_\Omega(\widetilde \bW)-
 \BD (\widetilde \bW) \right)
+\left(\dv_\Omega -
\dv \right) \BD (\widetilde \bW)\\
& + (Id-A_\Omega)\nabla \widetilde\sfP.
\end{aligned}
\end{equation}
By \eqref{prop:phi} we have obviously 
\begin{equation}
\|\nabla \Phi - Id\|_{(W^{1}_{q_2}\cap W^1_{q_1})(\Omega_0)} < C \epsilon.
\end{equation}
Then it is clear that
\begin{multline} \label{est:div:1}
    \| \dv_{\Omega_0}\left( \BD_{\Omega_0}(\widetilde \bW)-
 \BD (\widetilde \bW) \right) \|_{L_p(0,T;L_q(\HS))} \leq \\
 C\| Id - \nabla \Phi \|_{L_\infty(\HS)} \|\nabla^2 \widetilde \bW\|_{L_p(0,T;L_q(\HS))}+
 C\| |\nabla^2 \Phi | \, |\nabla \widetilde \bW| \|_{L_p(0,T;L_q(\HS))},
\end{multline}
where, with slight abuse of notation, in the second term we denote $\nabla^2 \Phi:=\nabla^2_x\Phi(t,\Phi^{-1}(t,y))$, therefore the norm is taken over $\HS$. 
The last term requires some care and we need to do the estimates for $q_2$ and $q_1$
separately. For $q_2$ we have by \eqref{prop:phi} and \eqref{sup:1} below
\begin{equation}  \label{Phi:q2}
\begin{aligned}
    &\| |\nabla^2 \Phi | \, |\nabla \widetilde \bW| \|_{L_p(0,T;L_{q_2}(\HS))}
    \leq C\|\nabla^2 \Phi\|_{L_{q_2}(\HS)} \|\nabla \widetilde \bW \|_{L_p(0,T;L_{\infty}(\HS))}\\
    &\leq C\epsilon \left(\|\nabla^2 \widetilde \bW\|_{L_p{(0,T;L_{q_1}(\HS))}}+\|\nabla^2 \widetilde \bW\|_{L_p{(0,T;L_{q_2}(\HS))}}\right)
\end{aligned}
\end{equation}
and for $q_1$
\begin{equation}  \label{Phi:q1}
\begin{aligned}
    &\| |\nabla^2 \Phi | \, |\nabla \widetilde \bW |\|_{L_p(0,T;L_{q_1}(\HS))}
    \leq C\|\nabla^2 \Phi\|_{L_{d}(\HS)} \|\nabla \widetilde \bW \|_{L_p(0,T;L_{q_2}(\HS))},\\
    &\leq C (\|\nabla^2 \Phi\|_{L_{q_1}(\HS)} + \|\nabla^2 \Phi\|_{L_{q_2}(\HS)}) \|\nabla^2 \widetilde \bW \|_{L_p(0,T;L_{q_1}(\HS))}
    \leq C\epsilon \|\nabla^2 \widetilde \bW \|_{L_p(0,T;L_{q_1}(\HS))},
\end{aligned}
\end{equation}
remembering that $\frac{1}{q_1}=\frac{1}{q_2}+\frac{1}{d}$. Above the estimate for $\nabla^2 \Phi$ follows from the interpolation inequality as $q_1<d<q_2$.

Combining the estimate on $\nabla^2 \Phi$ with \eqref{est:div:1}-\eqref{Phi:q2} 
we obtain 
\begin{equation} \label{est:div:2}
 \sum_{i=1}^2\| \dv_{\Omega_0}\left( \BD_{\Omega_0}(\widetilde \bW)-
 \BD (\widetilde \bW) \right) \|_{L_p(0,T;L_{q_i}(\HS))} \leq C\epsilon \sum_{i=1}^2\|\nabla^2 \widetilde \bW \|_{L_p(0,T;L_{q_i}(\HS))}.
\end{equation}
Estimating similarily the other terms resulting from the change of coordinates on the RHS of \eqref{Stokes:fixed2}, we can apply
\eqref{lin:global} and \eqref{local.2} to \eqref{Stokes:fixed2}. This way we obtain, respectively, \eqref{lin:global:pert} and \eqref{local.2:pert}, which completes the proof of Theorem \ref{thm:max:pert}. 
\end{proof}

The second result gives time-weighted maximal regularity for \eqref{eq:1}.
%In order to formulate it, let us define, in analogy to \eqref{right.1}
%\begin{equation}\label{right.1:pert}
%\begin{aligned}
%\CF^1_q(T,\Omega)&= \|(1+t)(\bF, \nabla G, \pd_t \bG, \nabla \bH)\|_{L_p((0, T); L_q(\Omega))} + \|(1+t)\bH\|_{\dot H^{1/2}_p((0, T); L_q(\Omega))}
\\
%&+ \|\bG\|_{L_p(0, T); L_q(\Omega))},\\[10pt]
%\CF^2_q(T,\Omega) & =\|(1+t)(\bF, \nabla G, \pd_t \bG, \nabla \bH)\|_{L_p((0, T); L_{q}(\HS))}
%+ \|(1+t)\bH \|_{\dot H^{1/2}_p((0, T); L_q(\Omega))}.
%\\
%\CF^0_q & =\|(\bF, \nabla G, \pd_t \bG, \nabla \Omega)\|_{L_p((0, T), L_{q_1}(\HS))}
%+ \|\pd_t(G, \bH)\|_{L_p((0, T), \dot W^{-1}_{q_1}(\Omega))}
%\end{aligned}\end{equation}
%Then we have
\begin{thm}\label{thm:weight:pert} Let $p>1$ and assume $q_0,q_1,q_2$ satisfy the assumptions of Theorem \ref{thm:gwp}.
Let $1<T\leq +\infty$ and let $\bW$ and $\sfP$ be solutions 
to equations \eqref{eq:1}.  Then, there holds
\begin{equation} \label{weight.global.1P}
\sum_{i=1}^2\|\bW\|_{L_\infty((0, T), L_{q_i}(\Omega_0))}  \leq C\sum_{i=0}^2
(\|\bU_0\|_{L_{q_i}(\Omega_0)}
+ \CF^2_{q_i}(T)),
\end{equation}
and
\begin{align} \label{weight.global.2P}
&
\sum_{i=1}^2
\|(1+t)(\pd_t, \nabla^2)\bW\|_{L_p((0, T); L_{q_i}(\Omega_0))}
\leq \\
&
C\big( \|\bU_0\|_{B^{2(1-1/p)}_{q_2,p}(\Omega_0)}+
\|\bU_0\|_{B^{2(1-1/p)}_{q_1,p}(\Omega_0)}+\|\bU_0\|_{L_{q_0}(\Omega_0)} 
+ \CF^1_{q_2}(T,\Omega_0) + \CF^2_{q_1}(T,\Omega_0) 
 + \CF^2_{q_0}(T)\big), \nonumber \\[3pt]
&
\|\bW\|_{L_p((0, T); L_{q_1}(\Omega_0))} \label{weight.global.3P}
\leq C\big(\|\bU_0\|_{L_{q_0}(\Omega_0)} + \|\bU_0\|_{L_{q_1}(\Omega_0)} + \CF^2_{q_1}(T,\Omega_0) + \CF^2_{q_0}(T,\Omega_0) \big),\\[3pt]
&\|\bW\|_{L_p((0, T); L_{q_2}(\Omega_0))} \leq [\textrm{RHS of \eqref{lin:global:pert}}] + [\textrm{RHS of \eqref{weight.global.3P}}]  \label{weight.global.4P},
\end{align}
where $\CF^i_{q}(T,\Omega_0)$ is defined in \eqref{right.1}.
\end{thm}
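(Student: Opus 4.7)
The plan is to reduce everything to the half-space estimates of Theorem \ref{thm:weight} through the time-independent measure-preserving diffeomorphism $\Phi : \Omega_0 \to \HS$ already used in the proof of Theorem \ref{thm:max:pert}, and then to choose the pair $(\oq, \oqq)$ in Theorem \ref{thm:weight} so that the arithmetic relations \eqref{exp:1}--\eqref{exp:2} supply the parameter $\bar\theta$ with $1/p < \bar\theta/2$ for each of the three exponents $q_0, q_1, q_2$.

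First I would repeat the pull-back from Theorem \ref{thm:max:pert}, obtaining \eqref{Stokes:fixed2} for $(\widetilde \bW, \widetilde \sfP)$ on $\HS$. Because $\Phi$ is independent of $t$, the weight $(1+t)$ commutes with the change of variables, and the commutator terms generated by $\dv_{\Omega_0}\BT_{\Omega_0} - \dv\BT$ and by $(\mathrm{Id} - A_{\Omega_0})$ inherit the same $\epsilon$-small bounds derived in \eqref{est:div:1}--\eqref{est:div:2}, with $(1+t)$ attached to $\nabla^2 \widetilde \bW$ on the right-hand side. Applying Theorem \ref{thm:weight} to $\widetilde \bW$ then produces extra contributions proportional to $\epsilon \sum_i \|(1+t)\nabla^2 \widetilde \bW\|_{L_p(L_{q_i})}$, which are absorbed into the left-hand side by smallness of $\epsilon$. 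The resulting bounds transfer back to $\Omega_0$ with a constant depending on $\|\nabla \Phi\|_{L_\infty \cap W^1_{q_1} \cap W^1_{q_2}}$.

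Next I would apply Theorem \ref{thm:weight} three times with different parameter pairs. For \eqref{weight.global.1P} I invoke \eqref{weight.global.1} with $\oq = q_i$, $i = 0, 1, 2$. For \eqref{weight.global.2P}, the identities in \eqref{exp:2} give $d(1/q_0 - 1/q_1) = d(1/q_1 - 1/q_2) = \theta$ and \eqref{require.1} gives $1/p < \theta/2$, so the condition \eqref{c:weight} is satisfied for both pairs $(\oq,\oqq) = (q_2, q_1)$ and $(\oq,\oqq) = (q_1, q_0)$; summing the two instances of \eqref{weight.global.2} yields \eqref{weight.global.2P}. The bound \eqref{weight.global.3P} follows from \eqref{weight.global.3} with $(\oq,\oqq) = (q_1, q_0)$.

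The remaining inequality \eqref{weight.global.4P} is the main obstacle, since within our fixed triple there is no admissible $\oqq < q_2$ for which $1/p < d(1/\oqq - 1/q_2)/2$ holds, and Theorem \ref{thm:weight} cannot be invoked directly at the top level $q_2$. I would close the gap by the Sobolev embedding $\dot W^1_{q_1}(\Omega_0) \hookrightarrow L_{q_2}(\Omega_0)$, which is sharp by \eqref{exp:2} because $d(1/q_1 - 1/q_2) = 1$, giving
\[
\|\bW\|_{L_p((0,T); L_{q_2}(\Omega_0))} \le C \|\nabla \bW\|_{L_p((0,T); L_{q_1}(\Omega_0))}.
\]
The right-hand side is then controlled by interpolation between $\|\bW\|_{L_p(L_{q_1})}$, already bounded via \eqref{weight.global.3P}, and $\|\nabla^2 \bW\|_{L_p(L_{q_1})}$, bounded via the un-weighted maximal regularity of Theorem \ref{thm:max:pert}, i.e.\ by the right-hand side of \eqref{lin:global:pert}. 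This is precisely the structure reflected in the formulation of \eqref{weight.global.4P}: the top exponent $q_2$ must be recovered from a spatial embedding rather than from a decay mechanism, so both the pure maximal-regularity bound and the weighted $L_{q_1}$-bound are needed simultaneously.
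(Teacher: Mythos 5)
Your proposal follows essentially the same route as the paper: pull back to the half-space through the time-independent diffeomorphism $\Phi$, absorb the $\epsilon$-small time-weighted commutator terms, apply Theorem \ref{thm:weight} with the pairs $(\oq,\oqq)=(q_1,q_0)$ and $(q_2,q_1)$, and recover the $L_p(L_{q_2})$ bound from the spatial embedding $\|\bW\|_{L_{q_2}}\leq C(\|\bW\|_{L_{q_1}}+\|\nabla^2\bW\|_{L_{q_1}})$, which is exactly how \eqref{weight.global.2P}--\eqref{weight.global.4P} are obtained in the text. One small correction: \eqref{exp:2} gives $d(1/q_1-1/q_2)=1$, not $\theta$, so for the pair $(q_2,q_1)$ the admissibility condition in \eqref{c:weight} is the weaker requirement $1/p<1/2$; your conclusion is unaffected.
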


\smallskip

{\bf Sketch of the proof of Theorem \ref{thm:weight:pert}}.
We consider system \eqref{weight:1} on $\Omega_0 \times (0,T)$ 
and rewrite it to the fixed domain $\HS \times (0,T)$ using the diffeomorphism $\Phi$. This leads to an analog of system \eqref{Stokes:fixed2} with $\left((1+t)\widetilde \bW,(1+t)\widetilde \sfP\right)$ instead of $(\widetilde \bW,\widetilde \sfP)$ and an additional term $- \widetilde \bW$ on the RHS of the momentum equation.
To this system we apply Theorem \ref{thm:weight}, 
for this purpose we need to estimate the terms coming from the transformation. Similarly as in the proof of Theorem \ref{thm:max:pert}, we restrict ourselves to showing the details for the most restrictive representative terms related to 
$$
\|(1+t) \dv_{\Omega_0}\left( \BD_{\Omega_0}(\widetilde \bW)-
 \BD (\widetilde \bW) \right) \|_{L_p(0,T;L_{q_i})}, \quad i=0,1,2. 
$$
%and
%$$
%\|(1+t)\dv_{\Omega_0}\left( \BD_{\Omega_0}(\widetilde \bW)-
% \BD (\widetilde \bW)\right)\|_{L_p(0,T;L_{q_1})}.
%$$ 
Namely, we have 
\begin{equation}  \label{Phi:q110}
\begin{aligned}
    &\|(1+t) |\nabla^2 \Phi | \, |\nabla \widetilde \bW |\|_{L_p(0,T;L_{q_0}(\HS))}
    \leq C\|\nabla^2 \Phi\|_{L_{q_1}(\HS)} \|(1+t)\nabla \widetilde \bW \|_{L_p(0,T;L_{q_2}(\HS))}\\
    &\leq C \|\nabla^2 \Phi\|_{L_{q_1}(\HS)} \|(1+t) \nabla^2 \widetilde \bW \|_{L_p(0,T;L_{q_1}(\HS))}
    \leq C\epsilon \|(1+t)\nabla^2 \widetilde \bW \|_{L_p(0,T;L_{q_1}(\HS))},\\[5pt]
    &\|(1+t) |\nabla^2 \Phi | \, |\nabla \widetilde \bW |\|_{L_p(0,T;L_{q_1}(\HS))}
    \leq C\|\nabla^2 \Phi\|_{L_{d}(\HS)} \|(1+t)\nabla \widetilde \bW \|_{L_p(0,T;L_{q_2}(\HS))}\\
    &\leq C (\|\nabla^2 \Phi\|_{L_{q_1}(\HS)}+\|\nabla^2 \Phi\|_{L_{q_2}(\HS)}) \|(1+t) \nabla^2 \widetilde \bW \|_{L_p(0,T;L_{q_1}(\HS))}\\
    &\leq C\epsilon \|(1+t)\nabla^2 \widetilde \bW \|_{L_p(0,T;L_{q_1}(\HS))}.    
\end{aligned}
\end{equation}
and, by \eqref{sup:1}, 
\begin{equation}  \label{Phi:q111}
\begin{aligned}
    &\|(1+t) |\nabla^2 \Phi | \, |\nabla \widetilde \bW |\|_{L_p(0,T;L_{q_2}(\HS))}
    \leq C\|\nabla^2 \Phi\|_{L_{{q_2}}(\HS)} \|(1+t)\nabla \widetilde \bW \|_{L_p(0,T;L_{\infty}(\HS))}\\
    &\leq C \|\nabla^2 \Phi\|_{L_{q_2}(\HS)}
    \left( \|(1+t) \nabla^2 \widetilde \bW \|_{L_p(0,T;L_{q_1}(\HS))}
    + \|(1+t) \nabla^2 \widetilde \bW \|_{L_p(0,T;L_{q_2}(\HS))} \right)\\
    &\leq C\epsilon \left( \|(1+t) \nabla^2 \widetilde \bW \|_{L_p(0,T;L_{q_1}(\HS))}
    + \|(1+t) \nabla^2 \widetilde \bW \|_{L_p(0,T;L_{q_2}(\HS))} \right) .
\end{aligned}
\end{equation}
The other terms are estimated in a similar way, there is just a need to control the new parts related to $\nabla^2 \Phi^{-1}$, which fortunately are small and fine integrable. Therefore, applying \eqref{weight.global.2} we obtain \eqref{weight.global.2P}, while \eqref{weight.global.3} yields \eqref{weight.global.3P}. Finally, \eqref{weight.global.4P} is obtained 
using the imbedding
$$
\|\bW\|_{L_p(0,T;L_{q_2}(\Omega_0))} \leq C \left( \|\bW\|_{L_p(0,T;L_{q_1}(\Omega_0))} + \|\nabla^2 \bW\|_{L_p(0,T;L_{q_1}(\Omega_0))} \right).
$$

\section{Nonlinear $L_p-L_q$ estimates. Proof of Theorem \ref{thm:gwp}} \label{sec:nonlin:lplq}

The local wellposedness of equations \eqref{eq-fixed} has been shown by Shibata \cite{SS15}.  
Let $T >0$ and assume $\bW$ satisfies equations \eqref{eq-fixed}. 
Then, as it is well-known (cf. for instance \cite{ES}, \cite{OS22} or \cite[Section 5]{PSZ}), 
in order to prove Theorem \ref{thm:gwp}, it suffices to prove that 
$\bW$ satisfies the estimate:
\begin{equation}\label{apriori}
E_T(\bW) \leq C\left(\sum_{i=1}^2 \|\bU_0\|_{B^{2(1-1/p)}_{q_i,p}(\HS)} 
+ \|\bU_0\|_{L_{q_0}(\HS)} + E_T(\bW)^2\right)
\end{equation}
for some constant being independent of $T$, where $E_T(\bW)$ is defined in \eqref{def:ETW}.

When $0<T<2$, in $E_T(\bW)$ 
the weight $(1+t)$ has no meaning.  So, essentially we have to consider the 
case when $T > 2$. 
Recall the definition \eqref{exp:1} of exponents $q_0$, $q_1$ and $q_2$ given in Theorem \ref{thm:gwp}. 
In particular, we have \eqref{exp:2}.
%Notice that $\theta_1 = d(1/q_0-1/q_1)=\theta$ and $\theta_2=d(1/q_0-1/q_2) = 1+\theta$.
%To use Theorem \ref{thm:weight}, 
%it is required that $1/p < \theta_1/2$, and  $1/p < \theta_2/2$.  
%Since $\theta_1=\theta$ and $\theta_2=1+\theta$, this requirement reduces to 

{\bf Half space case.} We want to apply Theorem \ref{thm:weight}, first with $\oq=q_1$, $\oqq=q_0$ and $\bar \theta = \theta_1:=d(1/q_0-1/q_1)$. 
Then by \eqref{exp:2} we have $\theta_1 = \theta$, so 
\eqref{c:weight} implies
\begin{equation}\label{require.1A}
\frac1p < \frac{\theta}{2}.
%\quad \frac1p< \frac12-\frac{\theta}{2}
%< 1-\frac{\theta}{2}.
\end{equation}
Next, we apply Theorem \ref{thm:weight} with $\oq=q_2,\;\oqq=q_1$, which leads to $\bar \theta=d(1/q_1-1/q_2)=1$. 
Therefore the last condition in \eqref{c:weight} reduces to $p > 2$, which is weaker then \eqref{require.1A}.

Moreover, to obtain $q_0 > 1$, we assume that $1+2\theta < d$. Since $d \geq 2$, 
it suffices to assume that $1 + 2\theta <2$, that is $0 <\theta < 1/2$.
Let us denote $\CF^j_{q_i}(\bW)(T,\Omega)$ by taking 
$$\bF=\bF(\bW), \enskip \Gdiv=\Gdiv(\bW), \enskip \bG = \bG(\bW),  \enskip  
\bH = \BH(\bW)
$$
in \eqref{right.1}. 
Applying Theorem \ref{thm:weight} $\oq=q_1,\; \oqq=q_0$ to \eqref{eq-fixed} we get   
\begin{equation} \label{nonlin.10A}
\begin{aligned}
&\|(1+t)(\pd_t, \nabla^2)\bW\|_{L_p((0, T); L_{q_1}(\HS))}
+ \|\bW\|_{L_p((0, T); L_{q_1}(\HS))}\\ 
&\leq C(\|\bU_0\|_{B^{2(1-1/p)}_{q_1,p}(\HS)} + \|\bU_0\|_{L_{q_0}(\HS)} 
+ \CF^1_{q_1}(\bW)(T) + \CF^2_{q_0}(\bW)(T)),\\[5pt]
&\|\bW\|_{L_\infty((0, T); L_{q_1}(\HS))} + \|\bW\|_{L_p((0, T); L_{q_1}(\HS))}\\ 
&\leq C\big(\|\bU_0\|_{L_{q_1}(\HS)} + \|\bU_0\|_{L_{q_0}(\HS)} + \CF^2_{q_1}(\bW)(T,\HS) + \CF^2_{q_0}(\bW)(T,\HS) \big).
\end{aligned}
\end{equation}
while setting $\oq =q_2,\; \oqq=q_1$ in \ref{thm:weight}
\begin{equation} \label{nonlin.10}
\begin{aligned}
&\|(1+t)(\pd_t, \nabla^2)\bW\|_{L_p((0, T); L_{q_2}(\HS))}
+ \|\bW\|_{L_p((0, T); L_{q_2}(\HS))}\\ 
&\leq C(\|\bU_0\|_{B^{2(1-1/p)}_{q_2,p}(\HS)} + \|\bU_0\|_{L_{q_1}(\HS)} 
+ \CF^1_{q_2}(\bW)(T) + \CF^2_{q_1}(\bW)(T)),\\[5pt]
&\|\bW\|_{L_\infty((0, T); L_{q_2}(\HS))} + \|\bW\|_{L_p((0, T); L_{q_2}(\HS))}\\ 
&\leq C\big(\|\bU_0\|_{L_{q_2}(\HS)} + \|\bU_0\|_{L_{q_1}(\HS)} + \CF^2_{q_2}(\bW)(T,\HS) + \CF^2_{q_1}(\bW)(T,\HS) \big).
\end{aligned}
\end{equation}
Combining the above inequalities and recalling that \eqref{right.1} implies in particular $\CF_q^1 \geq \CF_q^2$ we conclude 
\begin{align}
&\sum_{i=1}^2 \Bigl(\|(1+t)(\pd_t\bW, \nabla^2\bW, \nabla\sfP)\|_{L_p((0, T); L_{q_i}(\HS))}
+ \|\bW\|_{L_p((0, T); L_{q_i}(\HS))} +\|\bW\|_{L_\infty(0, T), L_{q_i}(\HS))}\Bigr)
\nonumber \\
&\quad \quad \leq C\Bigl(\sum_{i=1}^2(\|\bU_0\|_{B^{2(1-1/p)}_{q_i}(\HS)} 
+\CF^1_{q_i}(\bW)(T)) + \|\bU_0\|_{L_{q_0}(\HS)} + \CF^2_{q_0}(\bW)(T)\Bigr).\label{nonlinear:1}
\end{align}
In the nonhomogeneous case \eqref{inhomo-eq-fixed} the only difference is we the additional term 
\begin{equation} \label{nonlin:inhomo}
\sum_{i=0}^2 \|(1-\rho_0)(1+t)\de_t \bW\|_{L_p((0,T);L_{q_i}(\HS))}
\end{equation}
on the RHS of \eqref{nonlinear:1}.

The analysis for 
{\bf perturbed half space case} is immediate. This time we apply Theorem \ref{thm:weight:pert} 
to \eqref{eq-fixed}. This leads precisely to \eqref{nonlinear:1} with $\HS$ replaced by $\Omega_0$ - we skip the details for brevity.  
Therefore, in order to prove \eqref{apriori} we need to find appropriate estimates of the nonlinear terms on the RHS of \eqref{nonlinear:1}. For simplicity, we write $L_q:=L_q(\Omega_0)$,
where $\Omega_0=\R^d_+$, or it is given by \eqref{omega}, 
and similarly for other functional spaces defined on the spatial domain.

In what follows, we shall estimate nonlinear terms for $t > 1$. 
For this purpose, observe that we have 
\begin{equation} \label{struct:lag}
\BI - \BA_{v} = \phi(\int_0^t \nabla v d\tau), \quad \phi \in C^\infty(\R^d;\R^d), \quad \phi(0)=0.
\end{equation}
Therefore, taking into account the stucture of nonlinearities \eqref{def-nonlinear-terms}, we need to consider only the following typical nonlinear terms:
\begin{equation} \label{nonlin}
\begin{aligned}
N_1 &= (\pd_t, \nabla^2)v\int^t_0\nabla v\,\d s, \quad
N_2 = \nabla v \int^t_0\nabla^2v\,\d s, \\
N_3 & = \nabla v \int^t_0 \nabla v\,\d s, 
\quad N_4 = v\int^t_0\nabla v\,\d s.
\end{aligned}
\end{equation}
The first two terms are in the equation, the third term is the divergence condition:
$\dv \bu=G$ and boundary terms.  The last one is the divergence condition:
$\dv \bu = \dv \bG$.
Precisely, by \eqref{struct:lag} we have 
\begin{align}
&\bF(v),\nabla G_{\rm div}(v), \nabla \bH(v) \sim N_1+N_2, \label{struct:F}\\
&\bG(v)\sim N_4, \label{struct:G}\\
&\bH(v)\sim N_3, \label{struct:H}
\end{align}

\subsection{Estimates of $\bF(v),\bG(v), \Gdiv(v)$ and $\nabla \bH(v)$}
For the purpose of estimating $\bF(v),\bG(v), \Gdiv(v)$ and $\nabla \bH(v)$, taking into account \eqref{struct:lag}, it is enough to find appropriate bounds on 
$N_1,N_2$ and $N_4$.
Recalling that $1/q_0 = 1/q_1 + 1/q_2$ and $d(1/q_1-1/q_2) = 1$,  we have $\|\nabla v\|_{L_{q_2}}
\leq C\|\nabla^2 v\|_{L_{q_1}}$, and so
\begin{align*}
\|N_1\|_{L_{q_0}} &\leq \|(\pd_t, \nabla^2)v\|_{L_{q_1}}\int^t_0 \|\nabla v\|_{L_{q_2}}\,ds
\leq C\|(\pd_t, \nabla^2)v\|_{L_{q_1}}
\int^t_0\|\nabla^2 v\|_{L_{q_1}}\,\d s.
\end{align*}
For $q=q_1$ and $q_2$, we have 
\begin{align*}
\|N_1\|_{L_{q_i}} &\leq \|(\pd_tv, \nabla^2v)\|_{L_{q_i}}\int^t_0\|\nabla v\|_{L_\infty}\,
\d s \\
& \leq C \|(\pd_tv, \nabla^2v)\|_{L_{q_i}}\int^t_0(\|\nabla^2 v\|_{L_{q_1}}
+ \|\nabla^2v\|_{L_{q_2}})\,\d s.
\end{align*}
Observe that 
\begin{equation} \label{L1:1}
\int_0^t \|\nabla^2 v\|_{L_{q_i}}ds \leq \left( \int_0^t(1+t)^{-p'} \right)^{1/p'}\|(1+t)\nabla^2 v\|_{L_p(0,T;L_{q_i})}=C\|(1+t)\nabla^2 v\|_{L_p(0,T;L_{q_i})}.
\end{equation}
and similarly, as $q_2=q_1^*$, 
\begin{equation} \label{L1:2}
\int_0^t \|\nabla v\|_{L_{q_i}}ds \leq C\|(1+t)\nabla^2 v\|_{L_p(0,T;L_{q_1})}.
\end{equation}
Putting together the above estimates yields  
\begin{equation}\label{close:1}
\sum_{i=0}^2\|(1+t)N_1\|_{L_p((0, T); L_{q_i})}
\leq C(\sum_{\ell=1}^2\|(1+t)(\pd_t, \nabla^2)v\|_{L_p((0, T); L_{q_\ell})})^2.
\end{equation}
Next, recalling that $1/q_0 = 1/q_1 +1/q_2$,  we have 
\begin{align*}
\|N_2\|_{L_{q_0}} \leq C\|\nabla v\|_{L_{q_2}} \int^t_0\|\nabla ^2v\|_{L_{q_1}}\,\d s
\leq C\|\nabla^2 v\|_{L_{q_1}} \int^t_0\|\nabla ^2v\|_{L_{q_1}}\,\d s.
\end{align*}
%Since $d(\frac1{q_1}-\frac1{q_2}) = 1$, we have $\|\nabla v\|_{L_{q_2}}
%\leq C\|\nabla^2 v\|_{L_{q_1}}$, and so
%$$
%\|N_2\|_{L_{q_0}} \leq C\|\nabla^2 v\|_{L_{q_1}} %\int^t_0\|\nabla ^2v\|_{L_{q_1}}\,\d s.
%$$
From this  it follows that 
$$\|(1+t)N_2\|_{L_p((0, T); L_{q_0})}
\leq C\|(1+t)\nabla^2v\|_{L_p((0, T); L_{q_1})}\|(1+t)\nabla^2v\|_{L_p((0, T); L_{q_1})}.
$$
For $i =1,2$, we have 
\begin{align*}
\|N_2\|_{L_{q_i}}\leq C\|\nabla v\|_{L_\infty} \int^t_0\|\nabla ^2v\|_{L_{q_i}}\,\d s
\leq C(\|\nabla^2 v\|_{L_{q_1}} + \|\nabla^2 v\|_{L_{q_2}})
\|(1+t)\nabla^2 v\|_{L_p((0, T); L_{q_i})}.
\end{align*}
Thus, for $i=1,2$, we have 
$$\|(1+t)N_2\|_{L_p((0, T); L_{q_i})}
\leq C(\sum_{\ell=1,2}\|(1+t)\nabla^2 v\|_{L_p((0, T); L_{q_\ell})})
\|(1+t)\nabla^2 v\|_{L_p((0, T); L_{q_i})}.$$
Putting estimates above together yields
\begin{equation}\label{close:2}
\sum_{i=0}^2\|(1+t)N_2\|_{L_p((0, T); L_{q_i})}
\leq C(\sum_{\ell=1}^2\|(1+t)\nabla^2v\|_{L_p((0, T); L_{q_\ell})})^2.
\end{equation}
Combining \eqref{close:1} with \eqref{close:2} and recalling \eqref{def:ETW} we obtain 
\begin{equation} \label{close:1A}
\begin{aligned}
&\sum_{i=0}^2 \|(1+t)\bF(v),(1+t)\nabla G_{div}(v),\nabla \bH(v)\|_{L_p((0, T); L_{q_i})}\\
&\leq C(\sum_{\ell=1}^2\|(1+t)(\pd_t, \nabla^2)v\|_{L_p((0, T); L_{q_\ell})})^2
\leq C E_T(v)^2.
\end{aligned}
\end{equation}
Next, we estimate $\bG(v)$. For this purpose it is enough to consider $N_4$. Since
$$\pd_tN_4= \pd_tv \left( \int^t_0\nabla v\,\d s\right)
+ v\nabla v,
$$
we will estimate $\|\pd_tv \int^t_0\nabla v\,\d s\|_{L_{q_i}}$ and
$\|v\nabla v\|_{L_{q_i}}$
for $i=0,1,2$,   and from $F_t$, we consider $\|N_4\|_{L_p(\BR_+, L_{q_i}(\HS))}$
 for $i=1,2$. And so, we have to estimate 
$\|v \int^t_0\nabla v\,ds\|_{L_{q_i}}$ ($i=1,2$). 
By H\"older inequality and Sobolev imbedding, we have
\begin{align*}
\|\pd_tv \int^t_0\nabla v\,\d s\|_{L_{q_0}}
&\leq \|\pd_tv\|_{L_{q_1}}\int^t_0\|\nabla v\|_{L_{q_2}}\,\d s
\leq C\|\pd_t v\|_{L_{q_1}}\int^t_0\|\nabla^2 v\|_{L_{q_1}}\,\d s
\end{align*}
and
\begin{align*}
\|v\nabla v\|_{L_{q_0}} \leq \|v\|_{L_{q_1}}\|\nabla v\|_{L_{q_2}} \leq \|v\|_{L_{q_1}}
\|\nabla^2v\|_{L_{q_1}}.
\end{align*}
Combining these estimates we get
$$\|(1+t)\pd_t N_4\|_{L_p(\BR_+; L_{q_0})} 
\leq C(\|(1+t)v_t\|_{L_p(\BR_+; L_{q_1})}
+ \|v\|_{L_\infty(\BR_+, L_{q_1})})\|(1+t)\nabla^2v\|_{L_p(\BR_+; L_{q_1})}.
$$
Similarly, 
for $i=1,2$, we have 
\begin{align*}
\|\pd_tv \int^t_0\nabla v\,\d s\|_{L_{q_i}}
&\leq \|\pd_tv\|_{L_{q_i}}\int^t_0\|\nabla v\|_{L_{\infty}}\,\d s\\
&\leq C\|\pd_t v\|_{L_{q_i}}\int^t_0 \|\nabla^2 v\|_{L_{q_1}}^\alpha\|\nabla^2
v\|_{L_{q_2}}^{1-\alpha}\,\d s, 
\end{align*}
and 
\begin{align*}
\|v\nabla v\|_{L_{q_i}} \leq \|v\|_{L_{q_i}}\|\nabla v\|_{L_{\infty}} \leq \|v\|_{L_{q_i}}
\|\nabla^2v\|_{L_{q_i}}^\alpha\|\nabla^2v\|_{L_{q_2}}^{1-\alpha}.
\end{align*}
for $i = 1,2$.  Thus, we have
\begin{align*}
&\|(1+t)\pd_t N_4\|_{L_p((0, T); L_{q_i})} \\
&\quad \leq C(\|(1+t)\pd_tv\|_{L_p((0, T); L_{q_i})} + \|v\|_{L_\infty(\BR_+; L_{q_i})})
\sum_{\ell=1}^2\|(1+t)\nabla^2v\|_{L_p(\BR_+; L_{q_\ell})}.
\end{align*}
for $i=1,2$. 
It remains to estimate $N_4$. For this purpose we observe that 
$$\|N_4\|_{L_{q_i}} = \|v\int^t_0\nabla v\,ds\|_{L_{q_i}}
\leq  \|v\|_{L_{q_i}}\int^t_0\|\nabla v\|_{L_\infty}
\leq C\|v\|_{L_{q_i}}\int^t_0(\|\nabla^2v\|_{L_{q_1}}+\|\nabla^2v\|_{L_{q_2}})\,ds.
$$
Therefore, for $i=1,2$, we have
$$\|N_4\|_{L_p((0, T); L_{q_i})} \leq C\|v\|_{L_p((0, T); L_{q_i})}\sum_{\ell=1}^2
\|(1+t)\nabla^2v\|_{L_p((0, T); L_{q_\ell})}.
$$

Summing up, we have for $1<T\leq\infty$
\begin{equation}\label{close:6}\begin{aligned}
&\sum_{\ell=0}^2\|(1+t)\pd_t N_4\|_{L_p((0,T); L_{q_\ell})} 
+ \sum_{\ell=1}^2\|N_4\|_{L_p((0, T); L_{q_\ell})}\\
&\leq C(\sum_{\ell=1}^2(\|(1+t)\pd_tv\|_{L_p((0,T); L_{q_\ell})}
+\|v\|_{L_\infty((0,T); L_{q_\ell})} \\
&\qquad+\|v\|_{L_p((0, T); L_{q_\ell})}))
(\sum_{\ell=1}^2\|(1+t)\nabla^2v\|_{L_p((0,T); L_{q_\ell})}),
\end{aligned}\end{equation}
which, by \eqref{struct:G} and \eqref{def:ETW} implies 
\begin{equation} \label{close:2A}
\sum_{\ell=0}^2\|(1+t)\pd_t \bG(v)\|_{L_p((0,T); L_{q_\ell})} 
+ \sum_{\ell=1}^2\|\bG(v)\|_{L_p((0, T); L_{q_\ell})} \leq C\, E_T(v)^2.
\end{equation}
In the inhomogeneous case we have to estimate \eqref{nonlin:inhomo}, which is straightforward as we have\
$$
\|(1-\rho_0)(1+t)\de_t v\|_{L_p((0,T);L_{q_i})} \leq \|1-\rho_0\|_{L_\infty}\|(1+t)\de_t v\|_{L_p((0,T);L_{q_i})}, \quad i=1,2
$$
and
$$
\|(1-\rho_0)(1+t)\de_t v\|_{L_p((0,T);L_{q_0})} \leq \|1-\rho_0\|_{L_{q_2}}\|(1+t)\de_t v\|_{L_p((0,T);L_{q_1})}.
$$
Combining these estimates with assumption \eqref{small} we get 
\begin{equation} \label{est:pert}
\sum_{i=0}^2 \|(1-\rho_0)(1+t)\de_t v\|_{L_p((0,T);L_{q_i})} \leq \epsilon E_T(v). 
\end{equation}
\subsection{Interpolation in action - estimate of the boundary term}
\label{sec:interp}

In order to complete the proof of \eqref{apriori}, we need to show
\begin{equation} \label{est:half}
\sum_{i=0}^2\|(1+t)\bH(v)\|_{\dot H^{1/2}((0,T);L_{q_i})} \leq C E_T(v)^2, \quad 1 < T \leq \infty,
\end{equation}
where $E_T(\cdot)$ is defined in \eqref{def:ETW}.
For this purpose we apply complex interpolation theory. In order to be more precise we take into account the exact structure \eqref{struct:lag} in the nonlinear terms, instead of considering simply $N_3$ from \eqref{nonlin}. We prove a general result, which provides estimates of quite general form of nonlinearities in fractional Sobolev space in time. The result is of independent interest, as it creates a framework of estimating nonlinearities arising in transformation in Lagrangian coordinates in terms of $L_p-L_q$ estimates of the functions which appear in the nonlinearity. The result reads as follows.
\begin{lem} \label{l:interp}
Consider the operator 
\begin{equation} \label{def:S}
    Sf = f B(\int_0^t g \,d\tau),
\end{equation}
where 
$B(0)=0$  and $ B \in C^1$  in values of  $g$.
Assume that 
\begin{equation}
\frac{1}{r_0}=\frac{1}{r_1}+\frac{1}{r_2}
\end{equation}
and define 
\begin{equation} \label{c:20}
\begin{aligned}
&I_0(g):=\|g\|_{L_1(\R_+;L_{r_2})} +\|tg\|_{L_p(\HS;L_{r_2})},\\[3pt]
%&I_1(g):=\|g\|_{L_1(\R_+;L_{r_2})} +\|tg\|_{L_p(\HS;L_{r_1})},\\[3pt]
&I_1(g):=\|g\|_{L_1(\R_+;L_{r_2})} +\|tg\|_{L_{\infty}(\HS;L_{r_1})},\\[3pt]
&\frac{1}{r_1^+}=\frac{1}{2r_1}+\frac{1}{2r_2}.
\end{aligned}
\end{equation}
Then 
\begin{align} 
    &\|Sf\|_{\dot H^{1/2}_p(\R_+;L_{r_0})}\leq
    C I_0(g) (\|f\|_{\dot H^{1/2}_p(\R_+;L_{r_1})} +
    \|f\|_{L_{2p}(\R_+,(\frac{dt}{t^{1/2}});L_{r_1})})\label{half:1}\\[3pt]
    &\|Sf\|_{\dot H^{1/2}_p(\R_+;L_{r_0})}\leq
    C I_1(g) (\|f\|_{\dot H^{1/2}_p(\R_+;L_{r_1})} +
    \|f\|_{L_{p}(\R_+,(\frac{dt}{t^{1/2}});L_{r_1^+})}),  \label{half:2}
    %\\[3pt]
    %&\|Sf\|_{\dot H^{1/2}_p(\R_+;L_{r_0})}\leq 
    %C I_1(g) (\|f\|_{\dot H^{1/2}_p(\R_+;L_{r_1})} +
    %\|f\|_{L_{2p}(\R_+,(\frac{dt}{t^{1/2}});L_{r_1^+})}).\label{half:3}
\end{align}
%Moreover, if $d(\frac{1}{r_0}-\frac{1}{r_1})=1$ then 
%\begin{equation} \label{half:3b}
%\|Sf\|_{\dot H^{1/2}_p(\R_+;L_{r_0})}\leq 
%    C I_1(g) (\|\nabla f\|_{\dot H^{1/2}_p(\R_+;L_{r_0})} +
%    \|f\|_{L_{2p}(\R_+,(\frac{dt}{t^{1/2}});L_{r_1^+})}).
%\end{equation}
\end{lem}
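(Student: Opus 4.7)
The plan is to apply complex interpolation to the linear-in-$f$ operator $T_g f := Sf$ (with $g$ held fixed). By \eqref{complex:2},
\begin{equation*}
\dot H^{1/2}_p(\R_+; L_{r_0}) = [L_p(\R_+; L_{r_0}),\, \dot W^1_p(\R_+; L_{r_0})]_{1/2},
\end{equation*}
so the claim reduces to establishing two endpoint estimates $T_g : X_0 \to L_p(L_{r_0})$ and $T_g : X_1 \to \dot W^1_p(L_{r_0})$, with operator norms controlled by the terms in $I_0(g)$ (respectively $I_1(g)$), and then identifying the midpoint $[X_0, X_1]_{1/2}$ by means of \eqref{complex:int} with an intersection space containing the one appearing in the hypothesis. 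The $L_p$-endpoint is elementary: since $B(0) = 0$ and $B \in C^1$, one has the pointwise bound $|B(\int_0^t g\, d\tau)| \leq C \int_0^t |g|\, d\tau$, whence by H\"older in space (with $1/r_0 = 1/r_1 + 1/r_2$),
\begin{equation*}
\|Sf\|_{L_p(L_{r_0})} \leq \|B(\textstyle\int_0^t g\, d\tau)\|_{L_\infty(L_{r_2})}\, \|f\|_{L_p(L_{r_1})} \leq C\, \|g\|_{L_1(L_{r_2})}\, \|f\|_{L_p(L_{r_1})},
\end{equation*}
so one may take $X_0 = L_p(L_{r_1})$ with level-$0$ norm proportional to $\|g\|_{L_1(L_{r_2})}$.

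The $\dot W^1_p$-endpoint is the heart of the proof. Differentiating yields
\begin{equation*}
\partial_t(Sf) = (\partial_t f)\, B(\textstyle\int g) + f\, B'(\textstyle\int g)\, g.
\end{equation*}
The first summand is handled exactly as at level $0$ with $\partial_t f$ in place of $f$, contributing $C\, \|g\|_{L_1(L_{r_2})}\, \|\partial_t f\|_{L_p(L_{r_1})}$. The main obstacle is the second summand, a genuinely bilinear expression in $f$ and $g$ in which \emph{no} derivative of $f$ is available. To bound $\|fg\|_{L_p(L_{r_0})}$ one applies H\"older in space followed by a carefully weighted H\"older (or Cauchy-Schwarz) in time, arranged so that the $f$-factor produces exactly the auxiliary norm appearing in the hypothesis while the $g$-factor lies in the linear span of the ingredients of $I_j(g)$. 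For \eqref{half:1} the spatial split is $\|fg\|_{L_{r_0}} \leq \|f\|_{L_{r_1}}\|g\|_{L_{r_2}}$ and the temporal Cauchy-Schwarz uses the weights $t^{\pm 1/4}$,
\begin{equation*}
\int_{\R_+} \|f\|^p_{L_{r_1}} \|g\|^p_{L_{r_2}}\, dt \leq \Bigl(\int_{\R_+} \|f\|^{2p}_{L_{r_1}}\, \frac{dt}{t^{1/2}}\Bigr)^{\!1/2} \Bigl(\int_{\R_+} t^{1/2} \|g\|^{2p}_{L_{r_2}}\, dt\Bigr)^{\!1/2},
\end{equation*}
whose first factor is the $L_{2p}(dt/t^{1/2}; L_{r_1})$-norm of $f$ and whose second factor is controlled by a further interpolation between $\|g\|_{L_1(L_{r_2})}$ and $\|tg\|_{L_p(L_{r_2})}$. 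For \eqref{half:2} the symmetric spatial split $\|fg\|_{L_{r_0}} \leq \|f\|_{L_{r_1^+}} \|g\|_{L_{r_1^+}}$ (valid since $r_1^+ = 2 r_0$) together with the interpolation inequality $\|g\|_{L_{r_1^+}} \leq \|g\|^{1/2}_{L_{r_1}} \|g\|^{1/2}_{L_{r_2}}$ trades the $L_p(L_{r_2})$-bound on $tg$ for the $L_\infty(L_{r_1})$-bound in $I_1(g)$, and brings in the $L_p(dt/t^{1/2}; L_{r_1^+})$-norm of $f$.

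Assembling the two endpoint bounds, the complex interpolation theorem delivers $T_g: [X_0, X_1]_{1/2} \to \dot H^{1/2}_p(L_{r_0})$, and the formula \eqref{complex:int} applied to the weighted Bochner factors identifies $[X_0, X_1]_{1/2}$ with the intersection space $\dot H^{1/2}_p(L_{r_1}) \cap L_{2p}(dt/t^{1/2}; L_{r_1})$ for \eqref{half:1}, respectively $\dot H^{1/2}_p(L_{r_1}) \cap L_p(dt/t^{1/2}; L_{r_1^+})$ for \eqref{half:2}, with operator norm bounded by $I_0(g)$ (resp. $I_1(g)$). The principal technical difficulty throughout is the bilinear term $f g$: because no derivative of $f$ appears, its control forces the delicate weighted H\"older matching above, which in turn dictates the precise exponents appearing in the definitions \eqref{c:20} of $r_1^+$, $I_0$ and $I_1$, as well as the choice of auxiliary norms of $f$ in the hypothesis.
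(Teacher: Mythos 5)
Your overall skeleton is the same as the paper's: freeze $g$, regard $S$ as linear in $f$, prove an $L_p$ bound and a bound on $\partial_t(Sf)$, and pass to $\dot H^{1/2}_p$ by complex interpolation in $f$. The $L_p$ endpoint and the contribution $(\partial_t f)B(\int_0^t g)$ are handled exactly as in the paper. The gap is in the bilinear term $f\,g\,B'(\int_0^t g)$, which you correctly identify as the heart of the matter but do not actually control. For \eqref{half:1} your weighted Cauchy--Schwarz in time leaves you with the factor $\bigl(\int_{\R_+} t^{1/2}\|g\|_{L_{r_2}}^{2p}\,dt\bigr)^{1/2}$, and you assert this is "controlled by a further interpolation between $\|g\|_{L_1(\R_+;L_{r_2})}$ and $\|tg\|_{L_p(\R_+;L_{r_2})}$". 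That cannot work: interpolation between time exponents $1$ and $p$ only produces exponents in $[1,p]$, never $2p$. Concretely, taking $\|g(t)\|_{L_{r_2}}=\delta^{-1/p}\chi_{[1,1+\delta]}(t)$ keeps $I_0(g)$ bounded uniformly in $\delta$ while your factor is of order $\delta^{-1/2}\to\infty$, so your $\dot W^1_p$-endpoint estimate is simply not bounded by $I_0(g)$. A similar problem occurs in your sketch of \eqref{half:2}: after the split $\|fg\|_{L_{r_0}}\le\|f\|_{L_{r_1^+}}\|g\|_{L_{r_1}}^{1/2}\|g\|_{L_{r_2}}^{1/2}$ you must still remove $\|g\|_{L_{r_2}}^{1/2}$ from the time integral, and since only $\|g\|_{L_1(L_{r_2})}$ is available and $p>2$ in this paper, any H\"older split in time moves the $f$-factor away from the $L_p(dt/t^{1/2};L_{r_1^+})$ norm claimed in the statement.

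There is also a structural mismatch in your interpolation step: you insert the final weighted norms of $f$ (the $L_{2p}(dt/t^{1/2};L_{r_1})$, resp. $L_p(dt/t^{1/2};L_{r_1^+})$ norms) already into the endpoint space $X_1$, and then claim $[X_0,X_1]_{1/2}$ is the intersection appearing in the lemma; that is not what \eqref{complex:int} gives, since the auxiliary component would itself be interpolated with $L_p(L_{r_1})$ and change exponent and weight. The paper's device is precisely the opposite: at the $\partial_t$ level it estimates the bilinear term by the crude product $\|tg\|_{L_p(\R_+;L_{r_2})}\,\|t^{-1}f\|_{L_\infty(\R_+;L_{r_1})}$ for \eqref{half:1} (resp. $\|tg\|_{L_\infty(\R_+;L_{r_1})}\,\|t^{-1}f\|_{L_p(\R_+;L_{r_2})}$ for \eqref{half:2}), so that the $g$-factors are exactly the quantities in \eqref{c:20}, and the auxiliary $f$-norms at the two endpoints, namely $L_p(\R_+;L_{r_1})$ and $L_\infty(\R_+,\frac{dt}{t};L_{r_1})$ (resp. $L_p(\R_+,\frac{dt}{t};L_{r_2})$), interpolate at $\theta=1/2$ via \eqref{complex:int} precisely to $L_{2p}(\R_+,\frac{dt}{t^{1/2}};L_{r_1})$ (resp. $L_p(\R_+,\frac{dt}{t^{1/2}};L_{r_1^+})$). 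Redoing your endpoint estimate with these splits repairs both defects and recovers the stated bounds with constants $I_0(g)$ and $I_1(g)$.
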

\begin{rmk}
As already mentioned, the structure of the nonlinear term $Sf$ fits the needs of application of Lagrangian coordinates. Moreover, as will be seen from the proof, various modifications of estimates \eqref{half:1},\eqref{half:2} can be obtained in a similar way. However, we restrict  ourselves here to these three estimates which are necessary to estimate $\bH(v)$ in the required regularity.
\end{rmk}
\begin{proof}
{\bf Proof of \eqref{half:1}}. We start with observing that under conditions \eqref{c:20} we have
\begin{equation} \label{int:10}
    \|Sf\|_{L_p(\R_+;L_{r_0})} \leq C \|g\|_{L_1(\R_+;L_{r_2})}
    \|f\|_{L_p(\R_+;L_{r_1})}.
\end{equation}
Next, let us note that
\begin{equation} \label{int:10a}
\begin{aligned}
&\| \partial_t (f B(\int_0^t g d\tau) \|_{L_p(\R_+;L_{r_0}) }\leq 
\| (\partial_t f) B(\int_0^t g d\tau) \|_{L_p(\R_+;L_{r_0}) } +
\| f  B'(\int_0^t g) g \|_{L_p(\R_+;L_{r_0}) } \leq \\
&C \|g\|_{L_1(\R_+;L_{r_2})} \|\partial_t f\|_{L_p(\R_+;L_{r_1})} + C
\|t g  \|_{L_p(\R_+;L_{r_2})}\| f \frac{1}{t}\|_{L_\infty(\R_+;L_{r_1})},
\end{aligned}
\end{equation}
%So as $\int_0^\infty \|g\|_{r_2}dt$ and
%$\|t g  \|_{L_\infty(\R_+;L_{q_2})}$ are controlled, then
therefore
\begin{equation} \label{int:11}
    \|\partial_t (Sf)\|_{L_p(\R_+;L_{r_0})}
    \leq C \,I_0(g) \, (\|\partial_t f\|_{L_p(\R_+;L_{r_1})} +
    \|f\|_{L_\infty(\R_+,(\frac{dt}{t});L_{r_1})}).
\end{equation}
Now we take the complex interpolation between \eqref{int:10} 
and \eqref{int:11}. Applying the following particular case of \eqref{complex:int}:
\begin{equation}
\left( L_p(\R_+,(dt);L_{r_1}),L_{\infty}(\R_+,(t^{-1}dt);L_{r_1})) \right)_{1/2} = L_{2p}(\R_+,(t^{-1/2}dt);L_{r_1})),
\end{equation}
we get \eqref{half:1}. 

{\bf Proof of \eqref{half:2}.} We use again \eqref{int:10}, but we slightly modify the estimate of the time derivative, namely, we replace \eqref{int:10a} with  
\begin{equation} \label{int:12}
\begin{aligned}
&\| \partial_t (f B(\int_0^t g d\tau) \|_{L_p(\R_+;L_{r_0}) }\leq 
\| (\partial_t f) B(\int_0^t g d\tau) \|_{L_p(\R_+;L_{r_0}) } +
\| f  B'(\int_0^t g) g \|_{L_p(\R_+;L_{r_0}) } \leq \\
&C \|g\|_{L_1(\R_+;L_{r_2})} \|\partial_t f\|_{L_p(\R_+;L_{r_1})} + C
\|t g  \|_{L_{\infty}(\R_+;L_{r_1})}\| f \frac{1}{t}\|_{L_p(\R_+;L_{r_2})}, 
\end{aligned}
\end{equation}
which yields
\begin{equation} \label{int:13}
    \|\partial_t (Sf)\|_{L_p(\R_+;L_{r_0})}
    \leq C \,I_1(g) \, (\|\partial_t f\|_{L_p(\R_+;L_{r_1})} +
    \|f\|_{L_p(\R_+,(\frac{dt}{t});L_{r_2})}).
\end{equation}
Now we interpolate \eqref{int:10} with \eqref{int:13}. Applying
\begin{equation}
\left( L_p(\R_+,(dt);L_{r_1}),L_p(\R_+,(t^{-1}dt);L_{r_2})) \right)_{1/2} = L_{p}(\R_+,(t^{-1/2}dt);L_{r_1^+})),
\end{equation}
where $r_1^+$ is defined in \eqref{c:20},  we get \eqref{half:2}. 
This completes the proof of Lemma \ref{l:interp}.
\end{proof}
\begin{rmk}
As can be seen from the proof, we can replace $t$ by $(1+t)$ in Lemma \ref{l:interp}.
\end{rmk}
We  now apply Lemma \ref{l:interp} (with $t$ replaced by $(1+t)$) to prove \eqref{est:half}. For this purpose we set 
\begin{equation} \label{def:fgB}
f=(1+t)\nabla v, \;\; g=\nabla v, \;\; B(Z)=A(Id+Z)-Id.
\end{equation}
\vskip5mm
\noindent
{\bf Estimate of $\| (1+t) \bH(v)\|_{
    \dot H^{1/2}_p(\R_+;L_{q_0})}$}.  
Taking \eqref{def:fgB} and $r_i=q_i, \;\; i=0,1,2$ in \eqref{half:1} 
\begin{equation} \label{int:32}
    \| (1+t) \bH(v)\|_{
    \dot H^{1/2}_p(\R_+;L_{q_0})
    }\leq C \,I_0(\nabla v)\, \left (
    \| (1+t) \nabla v \|_{
    \dot H^{1/2}_p(\R_+;L_{q_1})
    } + \| (1+t)^{1/2} \nabla v\|_{L_{2p}(\R_+;L_{q_1})}\right).
\end{equation}
%Provided that $\int_0^\infty |\nabla v|_{q_2} dt$ and $\|t\nabla v\|_{L_p(\R_+;L_{q_2})}$. 
Note that
\begin{equation}
   \| (1+t)^{1/2} \nabla v\|_{L_{2p}(\R_+;L_{q_1})} \leq C \|(1+t)\nabla^2 v\|_{L_p(\R_+,L_{q_1})}^{1/2}
   \|v\|_{L_\infty(\R_+;L_{q_1})}^{1/2}.
\end{equation}
%And above we just use the interpolation inequality since the both spaces at the rhs are in control.
Therefore, recalling the definition of $I_0(\nabla v)$ and the estimate 
$$\|(1+t)\nabla v\|_{L_p(L_{q_2})}\leq C \|(1+t)\nabla^2 v\|_{L_p(L_{q_1})}$$
we can rewrite \eqref{int:12} as 
\begin{equation} \label{Hq0:1}
\begin{aligned}
\| (1+t) \bH(v)\|_{
    \dot H^{1/2}_p(\R_+;L_{q_0})
    }\leq & \,C \, \left(\|\nabla v\|_{L_1(\R_+;L_{q_2})}+\|(1+t)\nabla^2 v\|_{L_p(\R_+;L_{q_1})}\right) \\ 
    & \hskip-1cm \times \left (
    \| (1+t) \nabla v \|_{
    \dot H^{1/2}_p(\R_+;L_{q_1})
    } + \|(1+t)\nabla^2 v\|_{L_p(\R_+,L_{q_1})}^{1/2}
   \|v\|_{L_\infty(\R_+;L_{q_1})}^{1/2} \right).
\end{aligned}
\end{equation}
In order to estimate $\|\nabla v\|_{L_1(\R_+;L_{q_2})}$,
by the relation between $q_1$ and $q_2$ we get immediately  that $\dot B^{2/p}_{q_1,1} \subset  \dot B^{2/p-1}_{q_2,1}$,  keeping in mind that $2/p-1 <0$. So we conclude that
for some $0<\sigma<1$ we have
\begin{equation} \label{int:14}
\begin{aligned}
    \int_0^\infty \|\nabla v\|_{L_{q_2}} dt \leq 
    \int_0^\infty \| v\|_{\dot B^1_{q_2,1}} dt &\leq 
    C(\int_0^\infty \|v\|_{\dot B^{2/p-1}_{q_2,1}} dt)^{1-\sigma} (\int_0^\infty \|v\|_{\dot B^{d/q_2}_{q_2,1}} dt)^{\sigma}\\
    &\leq C(\int_0^\infty \|v\|_{\dot B^{2/p}_{q_1,1}} dt)^{1-\sigma} (\int_0^\infty \|v\|_{\dot B^{d/q_2}_{q_2,1}} dt)^{\sigma}.
\end{aligned}
\end{equation}
As $\frac{d}{q_2},\frac{2}{p}<1$, we have 
\begin{equation}\label{int:14a}
   \int_0^\infty \left(\|v\|_{\dot B^{d/q_2}_{q_i,1}}+\|v\|_{\dot B^{2/p}_{q_i,1}}\right) dt \leq C\|(1+t)v\|_{L_p(\R_+;W^2_{q_i})} \leq C E_T(v),
\end{equation}
which, combined with \eqref{Hq0:1} and \eqref{int:14} gives
\begin{equation} \label{int:15}
\| (1+t) \bH(v)\|_{
    \dot H^{1/2}_p(\R_+;L_{q_0})
    }\leq \,C \, E_T(v) 
    \left(
    \| (1+t) \nabla v \|_{
    \dot H^{1/2}_p(\R_+;L_{q_1})
    } + [E_T(v)]^{1/2}
   \|v\|_{L_\infty(\R_+;L_{q_1})}^{1/2} \right).
\end{equation}
Now, it is enough to observe that
$$
\|v\|_{L_\infty(\R_+;L_{q_1})}\leq C \|v\|_{L_\infty(\R_+;W^{2-2/p}_{q_1})}\leq C \, E_{+\infty}(v)  
$$
and, due to \eqref{imbed:3},
\begin{equation} \label{int:16}
\|(1+t) \nabla v\|_{\dot H^{1/2}_p(\R_+;L_{q_1})} \leq \|(1+t)v\|_{W^{2,1}_{q_1,p}}  \leq E_{+\infty}(v). 
\end{equation}
Plugging these estimates to \eqref{int:15} we finally obtain
\begin{equation*} 
\| (1+t) \bH(v)\|_{
    \dot H^{1/2}_p(\R_+;L_{q_0})
    }\leq C E_T(v)^2.
\end{equation*}
However, we can replace $\R_+$ with $(0,T)$ in all the estimates, which yields
\begin{equation} \label{close:10}
\| (1+t) \bH(v)\|_{
    \dot H^{1/2}_p((0,T);L_{q_0})
    }\leq C E_T(v)^2, \quad 1<T\leq\infty.
\end{equation}

\vskip5mm
\noindent
{\bf Estimate of $\|(1+t)\bH(v)\|_{\dot H^{1/2}_p(\R_+;L_{q_1})}$.} 
We apply \eqref{half:2} with $r_0=r_1=q_1$ and $r_2=\infty$, which implies 
$r_1^+=2q_1$.
Taking \eqref{def:fgB} in \eqref{half:2} we obtain 
\begin{equation}
\begin{aligned}
&\|(1+t)\bH(v)\|_{\dot H^{1/2}_p(\R_+;L_{q_1})}\\
&\leq C \left( \|\nabla v\|_{L_1(\R_+;L_\infty)} + \|(1+t) \nabla v\|_{L_{\infty}(\R_+;L_{q_1})} \right) 
\\
& \hskip2cm
\left(\|(1+t) \nabla v\|_{\dot H^{1/2}_p(\R_+;L_{q_1})} +
    \|(1+t)^{1/2}\nabla v\|_{L_{p}(\R_+,L_{2q_1})}\right).
\end{aligned}
\end{equation}
By \eqref{sup:L1} we have 
\begin{equation} \label{int:30}
\|\nabla v\|_{L_1(\R_+;L_\infty)} \leq C E_{+\infty}(v),
\end{equation}
and, by \eqref{para},
\begin{equation} \label{int:31}
\|(1+t) \nabla v\|_{L_{\infty}(\R_+;L_{q_1})} \leq C \|(1+t)v\|_{L_\infty(\R_+;W^{2-2/p}_{q_1})} \leq C E_{+\infty}(v).
\end{equation}
%Next, similarly to \eqref{int:16} we get 
%\begin{equation} \label{int:32}
%\|t \nabla v\|_{\dot H^{1/2}_p(\R_+;L_{q_2})} \leq E_{+\infty}(v). 
%\end{equation}
It remains to show
\begin{equation} \label{int:33}
\|(1+t)^{1/2}\nabla v\|_{L_{p}(\R_+,L_{2q_1})} \leq C E_T(v).
\end{equation}
For this purpose notice that,  we have
$$
\|(1+t)^{1/2}\nabla v\|_{2q_1} \leq \|(1+t)^{1/2}\nabla v\|_{q_1}^{1/2}\|\nabla v\|_{\infty}^{1/2}, 
$$
therefore, by \eqref{sup:1}, 
\begin{equation*}
\begin{aligned}
&\|(1+t)^{1/2}\nabla v\|_{L_p(\R_+;L_{2q_1})} \leq \|(1+t)^{1/2}\nabla v\|_{L_\infty(\R_+;L_{q_1})}^{1/2}\|\nabla v\|_{L_p(\R_+;L_\infty)}^{1/2}\\
&\leq \|(1+t)^{1/2}\nabla v\|_{L_\infty(\R_+;L_{q_1})}^{1/2}\|\nabla^2 v\|_{L_p(\R_+;L_{q_1})}^{(1-\alpha)/2}\|\nabla^2 v\|_{L_p(\R_+;L_{q_2})}^{\alpha/2} \leq E_{+\infty}(v),
\end{aligned}
\end{equation*}
which gives \eqref{int:33}.

Combining \eqref{int:16},\eqref{int:30},\eqref{int:31} and \eqref{int:33}  we obtain 
\begin{equation} \label{close:12}
\|(1+t) \bH(v)\|_{\dot H^{1/2}_p((0,T),L_{q_1})} \leq CE_T(v)^2,
\end{equation}
and, replacing $\R_+$ with $(0,T)$ in all the estimates,
\begin{equation} \label{close:11}
\|(1+t) \bH(v)\|_{\dot H^{1/2}_p(\R_+,L_{q_1})} \leq CE_{+\infty}(v)^2, \quad 0<T \leq \infty. 
\end{equation}
\vskip5mm
\noindent
{\bf Estimate of $\|(1+t)\bH(v)\|_{\dot H^{1/2}_p(\R_+,L_{q_2})}$} 
We apply \eqref{half:2} with $r_0=r_1=q_2$ and $r_2=\infty$, which implies 
$r_1^+=2q_2$.
Taking \eqref{def:fgB} in \eqref{half:2} we obtain 
\begin{equation} \label{est:q2}
\begin{aligned}
&\|(1+t)\bH(v)\|_{\dot H^{1/2}_p(\R_+;L_{q_2})}\\
&\leq C \left( \|\nabla v\|_{L_1(\R_+;L_\infty)} + \|(1+t) \nabla v\|_{L_{\infty}(\R_+;L_{q_2})} \right) 
\\
& \hskip2cm \left(\|(1+t) \nabla v\|_{\dot H^{1/2}_p(\R_+;L_{q_2})} +
    \|(1+t)^{1/2}\nabla v\|_{L_{p}(\R_+,L_{2q_2})}\right).
\end{aligned}
\end{equation}
Repeating the estimates \eqref{int:31} and \eqref{int:33} with $q_2$ instead of $q_1$ we get 
\begin{equation} \label{int:40}
\|(1+t) \nabla v\|_{L_{\infty}(\R_+;L_{q_2})} + \|(1+t)^{1/2}\nabla v\|_{L_{p}(\R_+,L_{2q_2})} \leq E_{+\infty}(v). 
\end{equation}
Next, similarly to \eqref{int:16} we get 
\begin{equation} \label{int:41}
\|(1+t) \nabla v\|_{\dot H^{1/2}_p(\R_+;L_{q_2})} \leq E_{+\infty}(v). 
\end{equation}
Plugging \eqref{int:30},\eqref{int:40} and \eqref{int:41} into \eqref{est:q2} we obtain 
\begin{equation} \label{close:13}
\|(1+t) \bH(v)\|_{\dot H^{1/2}_p((0,T),L_{q_2})} \leq CE_T(v)^2.
\end{equation}
\noindent
{\bf Proof of \eqref{est:half}.} Combining \eqref{close:10}, \eqref{close:11} and \eqref{close:12}
we arrive at \eqref{est:half}.

\subsection{Proof of Theorem \ref{thm:gwp}}

Combining \eqref{close:1A}, \eqref{close:2A} and \eqref{est:half}, we arrive at the final nonlinear estimate
$$\sum_{i=1}^2 \CF^1_{q_i}(\bW) + \CF^2_{q_0}(\bW) 
\leq CE_T(\bW)^2,
$$ 
which, taking additionally into account \eqref{est:pert} in the inhomogeneous case, completes the proof of \eqref{apriori}. As explained at the beginning of this section, this allows to conclude Theorem \ref{thm:gwp}. 

\section{Linear theory in Lorentz spaces} \label{sec:Lorentz:lin}

In this section we prove Lorentz estimates estimates for the Stokes system. We start with the half space case, where, relying on the techniques developed in \cite{DMT}, we combine the estimates derived in Theorems \ref{thm:max} and \ref{thm:weight} with interpolation properties of Lorentz spaces. Next we prove analogous result for the perturbed half space, applying the transformation introduced in proofs of Theorems \ref{thm:max:pert} and \ref{thm:weight:pert}.

\subsection{Estimates in the half space}
In this section we prove the following estimate in Lorentz spaces for the Stokes system \eqref{eq:1}.
Similarly as in the case of $L_p-L_q$ estimates, the result in the half space holds for general exponents $1<\oqq<\oq$ , but in case of perturbed half-space we need additional contraints.
\begin{lem} \label{l:Stokes:Lorentz}   
% Then, we have
% \begin{equation}\label{weight:5}
% \|\bU\|_{L_\infty(0, T), L_q(\HS))} \leq C\|(1+|t|)F\|_{L_p((0, T), L_q(\HS))}.
% \end{equation}
% \par
Let $\bW$ solve \eqref{eq:1}. Assume $1 < \oqq < \oq, \; 1 < p < \infty$, let
\begin{equation} \label{theta:p}
\bar \theta = d\left( \frac{1}{\oqq}-\frac{1}{\oq} \right), \quad
\frac{1}{p}<\frac{\bar \theta}{2}.
\end{equation}
Then, we have
\begin{equation} \label{Stokes:Lorentz:1}
\begin{aligned} 
&\|(\pd_t\bW, \nabla^2\bW, \nabla \sfP)\|_{L_{(p,1)}(\BR_+, L_{\oq}(\HS))} \\
&\quad \leq C\left(\|\bU_0\|_{\dot B^{2(1-1/p)}_{\oq,1}(\HS)} 
+ \|(\bF, \nabla \Gdiv, \pd_t\bG, \nabla \bH, \Lambda^{1/2}\bH)\|_{L_{(p,1)}(\BR_+, L_{\oq}(\HS))}\right),
\end{aligned}
\end{equation}
and
\begin{equation} \label{Stokes:Lorentz:2}
\begin{aligned} 
\|\bW\|_{L_{(p,1)}(\R_+;L_{\oq}(\HS))} & \leq C \Big( \|\bU_0\|_{L_{\oqq}(\HS))} + \|\bU_0\|_{L_{\oq}(\HS))}\\ 
& + \|(\bF, \nabla \Gdiv, \pd_t\bG, \nabla \bH, \Lambda^{1/2}\bH)\|_{L_{(p,1)}((0,1);L_{\oq}(\HS))}\\
& + \|(1+t)(\bF, \nabla \Gdiv, \pd_t\bG, \nabla \bH, \Lambda^{1/2}\bH)\|_{L_{(p,1)}(\R_+;L_{\oqq}(\HS))}  
\Big),
\end{aligned}
\end{equation}
\end{lem}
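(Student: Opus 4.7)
The plan is to derive both estimates by real interpolation from the $L_p$-$L_q$ results of Theorems \ref{thm:max} and \ref{thm:weight}, following the strategy introduced in \cite{DMT}. The starting point is to view the data-to-solution map $\Psi\colon (\bU_0, \bF, \Gdiv, \bG, \bH) \mapsto (\bW, \sfP)$ as a bounded linear operator between suitable Banach spaces, apply the real interpolation functor $(\cdot,\cdot)_{\vartheta,1}$ at two values of the time-integrability exponent $p$, and identify the resulting interpolation spaces with the Lorentz-type spaces appearing in the statement.

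For \eqref{Stokes:Lorentz:1}, I would fix $p_0, p_1$ with $1 < p_0 < p < p_1 < \infty$ and $\vartheta \in (0,1)$ such that $1/p = (1-\vartheta)/p_0 + \vartheta/p_1$, and apply Theorem \ref{thm:max} at both exponents. The interpolation identity $(L_{p_0}(\R_+; L_{\oq}), L_{p_1}(\R_+; L_{\oq}))_{\vartheta,1} = L_{(p,1)}(\R_+; L_{\oq})$ produces the target norm on the left. On the data side, \eqref{0:3a} yields $(B^{2(1-1/p_0)}_{\oq, p_0}, B^{2(1-1/p_1)}_{\oq, p_1})_{\vartheta,1} = B^{2(1-1/p)}_{\oq, 1}$, the Bessel-potential contribution $\bH \in \dot H^{1/2}_{p_i}(\R_+; L_{\oq})$ interpolates according to \eqref{def:Hp1} and can be traded, via \eqref{half}, for an $L_{(p,1)}$-norm of $\Lambda^{1/2}\bH$, and the remaining terms in time-Lebesgue spaces interpolate directly by the definition of Lorentz spaces.

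For \eqref{Stokes:Lorentz:2}, I would apply the same scheme to the long-time/short-time split built into the proof of \eqref{weight.global.3} in Theorem \ref{thm:weight}, compare estimates \eqref{weight:6}-\eqref{weight:9}, where the short-time $(0,1)$-part absorbs unweighted $L_{\oq}$-data while the long-time $(1,T)$-part absorbs weighted $L_{\oqq}$-data through the decay gained in \eqref{weight:2}. Since the condition $1/p < \bar\theta/2$ in \eqref{theta:p} is open, I can choose $p_0, p_1$ sufficiently close to $p$ with both exponents still satisfying $1/p_i < \bar\theta/2$, apply Theorem \ref{thm:weight} at each endpoint, and interpolate. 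The $p$-independent initial-data norms $\|\bU_0\|_{L_{\oqq}}$ and $\|\bU_0\|_{L_{\oq}}$ pass through unchanged, while the time-integral norms are upgraded to their Lorentz counterparts.

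The main obstacle is the bookkeeping of all the data spaces under interpolation; the most delicate point is the identification of $(\dot H^{1/2}_{p_0}(\R_+; L_{\oq}), \dot H^{1/2}_{p_1}(\R_+; L_{\oq}))_{\vartheta,1}$, which is encoded precisely by \eqref{def:Hp1} and then translated to an $L_{(p,1)}$-norm of $\Lambda^{1/2}\bH$ via \eqref{half}. A secondary subtlety in the second estimate is that the weight $(1+t)$ must be carried identically at both interpolation endpoints, but this is automatic since Theorem \ref{thm:weight} is formulated with exactly this weight for every admissible $p$, so the interpolation commutes with multiplication by $(1+t)$.
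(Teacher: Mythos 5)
Your proposal is correct and follows essentially the same route as the paper: both prove \eqref{Stokes:Lorentz:1} and \eqref{Stokes:Lorentz:2} by real interpolation with second index $1$ of the $L_{s}$--$L_{\oq}$ estimates of Theorems \ref{thm:max} and \ref{thm:weight} (more precisely, of the short-time/long-time split \eqref{weight:3}--\eqref{weight:3b} for the Duhamel part together with the semigroup bounds \eqref{semi:2}--\eqref{semi:3}) at two admissible exponents $p_1<p<p_2$, exploiting the openness of $1/p<\bar\theta/2$ and the identifications $(L_{p_1},L_{p_2})_{1/2,1}=L_{(p,1)}$ in time and $\bigl(\dot B^{2-2/p_1}_{\oq,p_1},\dot B^{2-2/p_2}_{\oq,p_2}\bigr)_{1/2,1}=\dot B^{2-2/p}_{\oq,1}$ for the initial datum. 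The only cosmetic difference is that the paper sidesteps your "delicate point" of interpolating the $\dot H^{1/2}_{p_i}$ norms of $\bH$: it uses the version of \eqref{lin:global} in which the boundary datum already enters as $\|\Lambda^{1/2}\bH\|_{L_{p_i}(\BR_+,L_{\oq}(\HS))}$, so this term interpolates as an ordinary Lebesgue-in-time norm.
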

\begin{proof} 
We use $L_p-L_q$ estimates proved in Theorems \ref{thm:max} and \ref{thm:weight} combined with
interpolation properties of Lorentz spaces. Namely, for any Banach space $A$ we have \cite[Theorem 2:1.18.6]{Tr}
\begin{equation}
    (L_{p_0,r_0}(0,T;A);L_{p_1,r_1}(0,T;A))_{\sigma,r}=L_{p,r}(0,T;A)  
\end{equation}
with 
\begin{equation} \label{c:interp}
\frac{1}{p}=\frac{\sigma}{p_0}+\frac{1-\sigma}{p_1}, \quad \sigma \in (0,1).
\end{equation}
In particular, taking $r_0=p_0$ and $r_1=p_1$ we obtain 
\begin{equation} \label{interp:lorentz}
    (L_{p_0}(0,T;A);L_{p_1}(0,T;A))_{\sigma,r}=L_{p,r}(0,T;A) 
\end{equation}
for $p,p_0,p_1,\sigma$ satisfying \eqref{c:interp}.
By Theorem \ref{thm:max} we have 
\begin{align} \label{lsq:1}
&\|(\pd_t\bW, \nabla^2\bW, \nabla \sfP)\|_{L_s(\BR_+, L_{\oq}(\HS))} \nonumber\\
&\quad \leq C(\|\bU_0\|_{\dot B^{2(1-1/s)}_{\oq,s}(\HS)} 
+ \|(\bF, \nabla \Gdiv, \pd_t\bG, \nabla \bH, \Lambda^{1/2}\bH)\|_{L_s(\BR_+, L_{\oq}(\HS))})
\end{align}
for $1<s,\oq<\infty$.
Moreover, by \eqref{weight:3}, \eqref{weight:3b} and \eqref{def:F}
%\begin{equation}
%    \|\bU\|_{L_p(0,1; L_q(\HS))} \leq C\|(1+t)F\|_{L_p(0,1;L_{q}(\HS))}.
%\end{equation}
\begin{align} 
    &\|\bU\|_{L_s((0,1); L_{\oq}(\HS))} \leq C  \|(\bF, \nabla \Gdiv, \pd_t\bG, \nabla \bH, \Lambda^{1/2}\bH)\|_{L_s((0,1);L_{\oq}(\HS))} \label{lsq:2}\\
    &\|\bU\|_{L_s((1,+\infty); L_{\oq}(\HS))} \leq C  \|(1+t)(\bF, \nabla \Gdiv, \pd_t\bG, \nabla \bH, \Lambda^{1/2}\bH)\|_{L_s(\R_+;L_{\oqq}(\HS))}, \label{lsq:3} 
\end{align}
Both \eqref{lsq:2} and \eqref{lsq:3} hold provided $\frac{1}{s}<\frac{\bar \theta}{2}$.

Finally, for the initial conditions, we can assume $\bU(0)=0$
in \eqref{semi:2} and \eqref{semi:3}. Therefore, by \eqref{semi:2} we obtain
\begin{equation} \label{lsq:4}
    \|\bV\|_{L_{p}((0,1), L_{\oq}(\HS))} \leq C\|\bV\|_{L_{\infty}((0,1), L_{\oq}(\HS))} \leq C\|\bU_0\|_{L_{\oq}(\HS))}.
\end{equation}  
%for any $s\in [1,\infty]$,
Next, by \eqref{semi:3}, we have in particular   
$$
\|\bV(t)\|_{L_{\oq}(\HS)} \leq Ct^{-\frac{\theta}{2}}\|\bU_0\|_{L_{\oqq}(\HS)},
$$
which, due to \eqref{theta:p}, implies
\begin{equation}  \label{lsq:5}
    \|\bV\|_{L_p((1,+\infty); L_{\oq}(\HS))} \leq C\|\bU_0\|_{L_{\oqq}(\HS))}.
\end{equation}
Now, if \eqref{theta:p} holds, then we can choose $1<p_1<p<p_2<\infty$ such that 
$$
\frac{1}{p_k}<\frac{\bar \theta}{2} \;\; {\rm for} \;\; k=1,2
\quad {\rm and} \quad \frac{1}{p}=\frac{1}{2 p_1}+\frac{1}{2 p_2}.
$$
Applying the interpolation inequality \eqref{interp:lorentz} with 
$\sigma=1/2, \, r=1$ and $A=L_{q_1}(\HS)$ to \eqref{lsq:1} with $s \in \{p_1,p_2\}$
we obtain \eqref{Stokes:Lorentz:1}
since, by \eqref{0:3a}, we have 
$$
\left( \dot B^{2-2/p_1}_{\oq,p_1},\dot B^{2-2/p_2}_{\oq,p_2} \right)_{\frac{1}{2},1} = \dot B^{2-2/p}_{\oq,1}.
$$
Similarly, applying \eqref{interp:lorentz} with 
$\sigma=1/2,\,r=1$ and $A=L_{q_1}(\HS)$ to \eqref{lsq:2}  and $A=L_{q_0}(\HS)$ to \eqref{lsq:3} with $s \in \{p_1,p_2\}$ we get 
\begin{align} 
    &\|\bU\|_{L_{(p,1)}((0,1); L_{\oq}(\HS))} \leq C  \|(\bF, \nabla \Gdiv, \pd_t\bG, \nabla \bH, \Lambda^{1/2}\bH)\|_{L_{(p,1)}((0,1);L_{\oq}(\HS))} \label{lorentz:2},\\
    &\|\bU\|_{L_{(p,1)}((1,+\infty); L_{\oq}(\HS))} \leq C  \|(1+t)(\bF, \nabla \Gdiv, \pd_t\bG, \nabla \bH, \Lambda^{1/2}\bH)\|_{L_{(p,1)}(\R_+;L_{\oqq}(\HS))}. \label{lorentz:3} 
\end{align}
Finally, \eqref{lsq:4} and \eqref{lsq:5} with $p$ replaced by $s\in\{p_1,p_2\}$ as before imply
\begin{equation} \label{V:lorentz}
\|\bV\|_{L_{(p,1)}(\R_+;L_{\oq}(\HS))} \leq C \left( \|\bU_0\|_{L_{\oqq}(\HS))} + \|\bU_0\|_{L_{oq}(\HS))} \right).
\end{equation}
Combining \eqref{V:lorentz},\eqref{lorentz:2} and \eqref{lorentz:3}
we get \eqref{Stokes:Lorentz:2}.
%which combined with \eqref{lorentz:1} gives \eqref{Stokes:Lorentz}. 
\end{proof}
From the above lemma we deduce
\begin{cor} Under the assumptions of Lemma \ref{l:Stokes:Lorentz} we have
\begin{align} \label{Stokes:Lorentz}
\|\bW\|_{W^{2,1}_{\oq,(p,1)}(\HS \times \R_+)} & \leq C(\|(1+|t|)(\bF,\nabla \Gdiv, \pd_t \bG, \nabla \bH, \Lambda^{1/2}\bH)\|_{L_{p,1}(\BR_+, L_{\oqq}(\HS))} \nonumber\\
&+ \|(\bF,\nabla \Gdiv, \pd_t \bG, \nabla \bH, \Lambda^{1/2}\bH)\|_{L_{p,1}(\BR_+, L_{\oq}(\HS))} \\ 
&+\|\bU_0\|_{L_{q_0}(\HS)}+\|\bU_0\|_{L_{\oq}(\HS)}
+ \|\bU_0\|_{B^{2-2/p}_{q,1}(\HS))}, \nonumber
\end{align}
where the space $W^{2,1}_{\oq,(p,1)}$ is defined in \eqref{def:space}. 
\end{cor}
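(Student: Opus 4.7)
The strategy is to obtain the two components of the $\|\cdot\|_{W^{2,1}_{\oq,(p,1)}}$ norm separately, combining the two estimates of Lemma \ref{l:Stokes:Lorentz} with a Lorentz-valued trace embedding that I would derive by real interpolation from the classical one \eqref{para}. The derivative component $\|(\pd_t \bW, \nabla^2 \bW)\|_{L_{(p,1)}(\R_+; L_{\oq}(\HS))}$ is already furnished by \eqref{Stokes:Lorentz:1}, so the only work lies in controlling $\|\bW\|_{L_\infty(\R_+; \dot B^{2-2/p}_{\oq,1}(\HS))}$ by the data.

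To derive the required trace embedding, I would choose two exponents $p_1 < p < p_2$ with $1/p = 1/(2p_1) + 1/(2p_2)$ and with $1/p_k < \bar\theta/2$ for $k = 1, 2$, which is possible under assumption \eqref{theta:p}. Applying \eqref{para} at $p_1$ and $p_2$ and then performing real interpolation with parameters $(1/2, 1)$ produces, on the data side, the space $L_{p,1}(\R_+; L_{\oq})$ by virtue of \eqref{interp:lorentz}; on the trace side, the Besov identity recorded in \eqref{0:3a},
\begin{equation*}
\left( \dot B^{2-2/p_1}_{\oq, p_1}, \dot B^{2-2/p_2}_{\oq, p_2} \right)_{1/2, 1} = \dot B^{2-2/p}_{\oq, 1},
\end{equation*}
yields the desired target space with summability index $1$ (note that $2 - 2/p_1 \neq 2 - 2/p_2$, which is what unlocks this identity). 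The upshot is the Lorentz trace embedding
\begin{equation*}
\|\bW\|_{L_\infty(\R_+; \dot B^{2-2/p}_{\oq, 1})} \leq C \bigl( \|\pd_t \bW, \nabla^2 \bW\|_{L_{(p,1)}(\R_+; L_{\oq})} + \|\bW\|_{L_{(p,1)}(\R_+; L_{\oq})} + \|\bU_0\|_{B^{2-2/p}_{\oq, 1}} \bigr),
\end{equation*}
where the middle term appears only if we interpret the Besov norm as inhomogeneous.

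Feeding \eqref{Stokes:Lorentz:1} into the derivative part of the right-hand side above and \eqref{Stokes:Lorentz:2} into the middle Lebesgue part produces precisely the listed contributions, the latter generating the initial-data norms $\|\bU_0\|_{L_{\oqq}(\HS)} = \|\bU_0\|_{L_{q_0}(\HS)}$ and $\|\bU_0\|_{L_{\oq}(\HS)}$ that appear on the right-hand side of the announced estimate. The main technical obstacle is the compatibility of real interpolation with the intersection structure of the maximal-regularity space and the verification that the Besov interpolation identity applies in the stated form; both are classical once the distinctness of the smoothness exponents $2 - 2/p_k$ is noted. The rest is a direct assembly of estimates.
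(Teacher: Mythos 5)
Your proposal is correct and follows essentially the route the paper intends: the corollary is left as an immediate consequence of Lemma \ref{l:Stokes:Lorentz}, and your argument reproduces exactly the interpolation mechanism already used in its proof (choosing $p_1<p<p_2$ with $1/p_k<\bar\theta/2$, real interpolation with parameters $(1/2,1)$, the Besov identity $(\dot B^{2-2/p_1}_{\oq,p_1},\dot B^{2-2/p_2}_{\oq,p_2})_{1/2,1}=\dot B^{2-2/p}_{\oq,1}$, and \eqref{para} for the trace part), after which \eqref{Stokes:Lorentz:1} and \eqref{Stokes:Lorentz:2} supply all the data terms, with $\|\bU_0\|_{L_{\oqq}}$ identified with $\|\bU_0\|_{L_{q_0}}$. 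The only point to handle slightly more carefully is the intersection-structure issue you flag: it disappears if you interpolate the composite solution map (data $\mapsto \bW$, bounded into $L_\infty(\R_+;\dot B^{2-2/s}_{\oq,s})$ for $s=p_1,p_2$ by combining \eqref{para} with \eqref{lsq:1}) rather than the embedding itself.
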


Next, in a similar way we derive weighted estimates for the Stokes system in Lorentz spaces. 
\begin{lem} \label{l:weight:lorentz}
Assume $1<p,q<1$. Let $\bW$ solve \eqref{eq:1}. Then 
\begin{align} \label{weight:lorentz}
&\|(1+t)(\pd_t\bW, \nabla^2\bW, \nabla \sfP)\|_{L_{(p,1)}(\BR_+, L_{q}(\HS))} \nonumber\\
&\quad \leq C\left(\|\bU_0\|_{\dot B^{2(1-1/p)}_{q,1}(\HS)} 
+ \|\bW, (1+t)(\bF, \nabla \Gdiv, \pd_t\bG, \nabla \bH, \Lambda^{1/2}\bH)\|_{L_{(p,1)}(\BR_+, L_{q}(\HS))}\right).
\end{align}
\end{lem}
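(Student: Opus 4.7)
The plan is to imitate the rescaling trick used in the proof of Theorem~\ref{thm:weight}, this time substituting the Lorentz maximal regularity of Lemma~\ref{l:Stokes:Lorentz} for the classical $L_p$--$L_q$ version. Setting $\widetilde\bW := (1+t)\bW$ and $\widetilde\sfP := (1+t)\sfP$, the direct computation that produces \eqref{weight:1} shows that $(\widetilde\bW,\widetilde\sfP)$ solves a Stokes system on $\HS\times\R_+$ with initial datum $\bU_0$ and right-hand sides
\[
\widetilde\bF = (1+t)\bF - \bW,\quad \widetilde\Gdiv = (1+t)\Gdiv,\quad \widetilde\bG = (1+t)\bG,\quad \widetilde\bH = (1+t)\bH.
\]
The correction $-\bW$ in $\widetilde\bF$ comes from $\pd_t((1+t)\bW) = \bW + (1+t)\pd_t\bW$, and it is precisely this term that furnishes the bare $\|\bW\|_{L_{(p,1)}(\R_+;L_q(\HS))}$ summand appearing on the right-hand side of \eqref{weight:lorentz}.

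Next, I would apply estimate \eqref{Stokes:Lorentz:1} of Lemma~\ref{l:Stokes:Lorentz} with $\oq=q$ to the rescaled pair $(\widetilde\bW,\widetilde\sfP)$. A key observation is that \eqref{Stokes:Lorentz:1}, unlike \eqref{Stokes:Lorentz:2}, holds for arbitrary $1<p,q<\infty$ with no integrability constraint of the form \eqref{theta:p}, so no exponent balance is needed here. This produces a bound
\[
\|(\pd_t\widetilde\bW,\nabla^2\widetilde\bW,\nabla\widetilde\sfP)\|_{L_{(p,1)}(\R_+;L_q(\HS))} \leq C\Big(\|\bU_0\|_{\dot B^{2(1-1/p)}_{q,1}(\HS)} + \mathcal{R}\Big),
\]
where $\mathcal{R}$ collects the $L_{(p,1)}(\R_+;L_q(\HS))$-norms of $\widetilde\bF,\ \nabla\widetilde\Gdiv,\ \pd_t\widetilde\bG,\ \nabla\widetilde\bH$ and $\Lambda^{1/2}\widetilde\bH$. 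Expanding via the Leibniz rule gives $\pd_t\widetilde\bW = \bW + (1+t)\pd_t\bW$, $\nabla^2\widetilde\bW = (1+t)\nabla^2\bW$, $\nabla\widetilde\sfP = (1+t)\nabla\sfP$, and moving the $\bW$ piece to the right via the triangle inequality recovers precisely the control of $(1+t)(\pd_t\bW,\nabla^2\bW,\nabla\sfP)$ claimed in \eqref{weight:lorentz}.

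On the forcing side, $\widetilde\bF$, $\nabla\widetilde\Gdiv$, $\nabla\widetilde\bH$ and $\Lambda^{1/2}\widetilde\bH$ contribute directly the $(1+t)$-weighted quantities listed on the right-hand side of \eqref{weight:lorentz}, plus the $\|\bW\|_{L_{(p,1)}(L_q)}$ piece coming from $-\bW$ in $\widetilde\bF$. The delicate point that I expect to be the only real obstacle is the divergence forcing: since $\pd_t\widetilde\bG = \bG + (1+t)\pd_t\bG$, the bookkeeping must absorb an additional bare $\bG$ contribution. Under the standing assumption $\bG|_{t=0}=0$ this is handled via the representation $\bG(t) = \int_0^t \pd_s\bG\,\d s$ combined with a Hardy-type Lorentz inequality that leverages the splitting $(1+s)^{-1}\cdot(1+s)\pd_s\bG$ and the fact that $(1+t)^{-1}\in L_{p',\infty}(\R_+)$; the outcome is a control of $\|\bG\|_{L_{(p,1)}(\R_+;L_q)}$ by $\|(1+t)\pd_t\bG\|_{L_{(p,1)}(\R_+;L_q)}$, after which all terms match those on the right-hand side of \eqref{weight:lorentz}. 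The maximal regularity itself thus comes essentially for free from Lemma~\ref{l:Stokes:Lorentz}; the work sits in the careful bookkeeping of weight-shifting terms generated by the Leibniz rule on the $(1+t)$ factor.
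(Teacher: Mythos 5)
Your overall architecture is sound and is a legitimate variant of the paper's argument: the paper first establishes the weighted estimate at the $L_s$ level (\eqref{maxdec.1}--\eqref{maxdec.2}, obtained exactly by your rescaling of the system as in \eqref{weight:1}) and only then passes to Lorentz spaces by real interpolation over $s\in\{p_1,p_2\}$ via \eqref{interp:lorentz}; you instead interpolate first (that is the content of \eqref{Stokes:Lorentz:1}, and you are right that this estimate needs no constraint of type \eqref{theta:p}) and then apply the weighting trick at the Lorentz level. Since all estimates are linear in the data, both orderings are admissible, and your identification of the commutator terms $\pd_t((1+t)\bW)=\bW+(1+t)\pd_t\bW$ and $\widetilde\bF=(1+t)\bF\mp\bW$ matches the paper.

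The genuine gap is your treatment of the divergence datum. From $\pd_t\big((1+t)\bG\big)=\bG+(1+t)\pd_t\bG$ you must control $\|\bG\|_{L_{p,1}(\R_+;L_q(\HS))}$, and the Hardy-type inequality you propose, namely $\|\bG\|_{L_{p,1}(\R_+;L_q)}\lesssim\|(1+t)\pd_t\bG\|_{L_{p,1}(\R_+;L_q)}$ via $\bG(t)=\int_0^t\pd_s\bG\,\d s$ and $(1+t)^{-1}\in L_{p',\infty}$, is false. The H\"older--Lorentz step only yields
\begin{equation*}
\sup_{t>0}\|\bG(t)\|_{L_q}\;\leq\;\int_0^\infty (1+s)^{-1}\,(1+s)\|\pd_s\bG(s)\|_{L_q}\,\d s\;\lesssim\;\|(1+s)\pd_s\bG\|_{L_{p,1}(\R_+;L_q)},
\end{equation*}
i.e.\ an $L_\infty$-in-time bound; the majorant $t\mapsto\int_0^t\|\pd_s\bG\|_{L_q}\,\d s$ is nondecreasing and hence never belongs to $L_{p,1}(\R_+)$ unless it vanishes identically. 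Concretely, $\pd_s\bG=\varphi(x)(1+s)^{-2}$ has $(1+s)\pd_s\bG\in L_{p,1}(\R_+;L_q)$ while $\bG(t)\to\varphi\neq0$, so $\|\bG\|_{L_{p,1}(\R_+;L_q)}=\infty$. The way out is not to absorb this term at all: keep $\|\bG\|_{L_{p,1}(\R_+;L_q)}$ as an extra term on the right-hand side — exactly as the paper's $L_p$-level bookkeeping does through the separate summand $\|\bG\|_{L_p((0,T);L_q)}$ in $\CF^1_q$ of \eqref{right.1} — or follow the paper's route and interpolate the weighted estimate \eqref{maxdec.2} directly, which carries the same right-hand side structure. (Note that the statement \eqref{weight:lorentz} itself, like \eqref{maxdec.1} as displayed, silently drops this bare $\bG$ contribution, so your difficulty points at a real imprecision in the source; but the repair you propose does not close it.)
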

\begin{proof}
By \eqref{maxdec.1} we have 
\begin{equation}\label{maxdec.2}\begin{aligned}
&\|(1+t)(\pd_t, \nabla^2)\bW\|_{L_s((0, T), L_{q}(\BR_+))}
+\|(1+t)\nabla \sfP\|_{L_s((0, T), L_{q}(\HS))}\\
&\quad
\leq C(\|\bU_0\|_{B^{2(1-1/s)}_{q,s}(\HS)}
+ \|\bW,\,(1+t)(\bF, \nabla \Gdiv, \pd_t \bG, \nabla \bH)\|_{L_s((0, T); L_{q}(\HS))}\\
&\quad +\|(1+t)\bH\|_{\dot H^{1/2}_s((0, T); L_{q}(\HS))}).
\end{aligned}\end{equation}
for $1<q,s<\infty$. Applying the interpolation inequality \eqref{interp:lorentz} with $\sigma=1/2,\,r=1$, $A=L_{q_1}(\HS)$ and $s \in \{p_1,p_2\}$ with $\frac{1}{p}=\frac{1}{2p_1}+\frac{1}{2p_2}$
we obtain \eqref{weight:lorentz}.
\end{proof}

\subsection{Estimates in the perturbed half space}
Here we prove Lorentz spaces estimates for the Stokes system \eqref{eq:1:pert}. For this purpose we apply the results from the previous section and the transformation introduced in Section \eqref{sec:lin:pert}.  
\begin{lem} \label{l:Lorentz:pert}
Let $\bW$ solve \eqref{eq:1:pert}, where $\Omega_0$ is defined in \eqref{omega}. Let $p,q_0,q_1,q_2$ satisfy the assumptions of Theorem \ref{thm:gwp}. Then for $i=1,2$ we have
\begin{align} \label{Stokes:Lorentz:1:pert}
&\|(\pd_t\bW, \nabla^2\bW, \nabla \sfP)\|_{L_{p,1}(\BR_+, L_{q_i}(\Omega_0))} \nonumber\\
&\quad \leq C\left(\|\bU_0\|_{\dot B^{2(1-1/p)}_{q_i,1}(\HS)} 
+ \|(\bF, \nabla \Gdiv, \pd_t\bG, \nabla \bH, \Lambda^{1/2}\bH)\|_{L_{p,1}(\BR_+, L_{q_i}(\Omega_0))}\right)\\
&\quad  +\epsilon \left( \|\nabla^2 \bW\|_{L_{p,1}(\BR_+, L_{q_1}(\Omega_0))} + \|\nabla^2 \bW\|_{L_{p,1}(\BR_+, L_{q_2}(\Omega_0))} \right)\nonumber
\end{align}
and
\begin{align} \label{Stokes:Lorentz:2:pert}
\|\bW\|_{L_{p,1}(\R_+;L_{q_i}(\Omega_0))} \leq & C \left( \|\bU_0\|_{L_{q_{i-1}}(\Omega_0)} + \|\bU_0\|_{L_{q_i}(\Omega_0)} \right. \nonumber\\ 
& + \|(\bF, \nabla \Gdiv, \pd_t\bG, \nabla \bH, \Lambda^{1/2}\bH)\|_{L_{p,1}((0,1);L_{q_i}(\HS))}\\ 
&+ \|(1+t)(\bF, \nabla \Gdiv, \pd_t\bG, \nabla \bH, \Lambda^{1/2}\bH)\|_{L_{p,1}(\R_+;L_{q_{i-1}}(\Omega_0))}  
\Big),\\\nonumber
& +\epsilon \|(1+t)\nabla^2 \bW\|_{L_{p,1}(\BR_+, L_{q_1}(\Omega_0))}.
\end{align}
\end{lem}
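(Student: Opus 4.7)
The strategy will be to imitate the argument of Theorems \ref{thm:max:pert} and \ref{thm:weight:pert}: pull \eqref{eq:1:pert} back to the half space via the measure-preserving diffeomorphism $\Phi\colon\Omega_0\to\HS$ of \cite{DM2015}, and treat the commutators produced by the change of variables as small perturbations. The only novelty with respect to the $L_p$ proofs is that the role of Theorems \ref{thm:max} and \ref{thm:weight} will be played by their Lorentz-valued counterparts, namely Lemmas \ref{l:Stokes:Lorentz} and \ref{l:weight:lorentz}.

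To obtain \eqref{Stokes:Lorentz:1:pert} I will transfer the problem onto $\HS$ in the form \eqref{Stokes:fixed2} and apply Lemma \ref{l:Stokes:Lorentz} with $(\oq,\oqq)=(q_1,q_0)$ and with $(\oq,\oqq)=(q_2,q_1)$. In both cases the condition $1/p<\bar\theta/2$ will be automatic: the first choice gives $\bar\theta=\theta$, which is precisely \eqref{require.1}, while the second gives $\bar\theta=1$, which is weaker. This produces the main linear right-hand side; the commutator terms must still be absorbed into the factor $\epsilon$.

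The core step will therefore be to estimate each commutator in $L_{p,1}(\R_+;L_{q_i}(\HS))$ for $i=1,2$. From the representative decomposition \eqref{div:pert}, the factor $(\BI-\nabla\Phi)\nabla^2\widetilde\bW$ is controlled immediately by property \eqref{0:1} and $\|\BI-\nabla\Phi\|_{L_\infty}<C\epsilon$. The delicate term is $|\nabla^2\Phi|\,|\nabla\widetilde\bW|$. In $L_{p,1}(L_{q_2})$ it is dominated pointwise in time by $\|\nabla^2\Phi\|_{L_{q_2}}\|\nabla\widetilde\bW\|_{L_\infty}$, and the spatial interpolation \eqref{sup:1} gives $\|\nabla\widetilde\bW\|_{L_\infty}\leq C\|\nabla^2\widetilde\bW\|_{L_{q_1}}^{1-\alpha}\|\nabla^2\widetilde\bW\|_{L_{q_2}}^{\alpha}$. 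The hard part will be to lift this pointwise interpolation to a Lorentz norm in time; the plan is to combine the generalized H\"older inequality \eqref{holder2} with the power-scaling identity \eqref{0:2}, choosing Lorentz indices with $r_1(1-\alpha)=r_2\alpha=1$ (so that $1/r_1+1/r_2=1$), to reach
\begin{equation*}
\|\nabla\widetilde\bW\|_{L_{p,1}(\R_+;L_\infty)}\leq C\|\nabla^2\widetilde\bW\|_{L_{p,1}(\R_+;L_{q_1})}^{1-\alpha}\|\nabla^2\widetilde\bW\|_{L_{p,1}(\R_+;L_{q_2})}^{\alpha}.
\end{equation*}
Combined with $\|\nabla^2\Phi\|_{L_{q_2}}<C\epsilon$ and Young's inequality, this will produce the $\epsilon$-contribution in \eqref{Stokes:Lorentz:1:pert}. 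The $L_{p,1}(L_{q_1})$ estimate will be analogous, using $\|\nabla^2\Phi\|_{L_d}<C\epsilon$ (via interpolation between $L_{q_1}$ and $L_{q_2}$) and the Sobolev embedding $\dot W^1_{q_1}\hookrightarrow L_{q_2}$, exactly as in \eqref{Phi:q1}. All remaining commutators in \eqref{Stokes:fixed2} are of lower order and handled in the same way.

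For \eqref{Stokes:Lorentz:2:pert} I will apply the same reduction to $\HS$, this time invoking the weighted Lorentz estimate of Lemma \ref{l:weight:lorentz} together with the Lorentz interpolation of \eqref{weight:3}, \eqref{weight:3b}, \eqref{lsq:4} and \eqref{lsq:5} that underlies \eqref{Stokes:Lorentz:2}. The commutators now carry the weight $(1+t)$, and reproducing the argument of the previous paragraph with $(1+t)\nabla^2\widetilde\bW$ in place of $\nabla^2\widetilde\bW$ will contribute exactly the term $\epsilon\|(1+t)\nabla^2\bW\|_{L_{p,1}(\R_+;L_{q_1})}$ appearing on the right-hand side of \eqref{Stokes:Lorentz:2:pert}, completing the proof.
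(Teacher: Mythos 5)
Your proposal is correct and follows essentially the same route as the paper: transform to the half space via the measure-preserving diffeomorphism, apply the half-space Lorentz estimates \eqref{Stokes:Lorentz:1}--\eqref{Stokes:Lorentz:2}, and absorb the commutator $|\nabla^2\Phi|\,|\nabla\widetilde\bW|$ (unweighted in $L_{q_2},L_{q_1}$ and weighted in $L_{q_1},L_{q_0}$) using the smallness of $\nabla^2\Phi$, exactly as in \eqref{Phi:q2}--\eqref{Phi:q1} and \eqref{pert:1}--\eqref{pert:2}. The only cosmetic deviations are that the paper lifts \eqref{sup:1} to $L_{p,1}$ in time simply via the additive bound $\|\nabla\widetilde\bW\|_{L_\infty}\lesssim\|\nabla^2\widetilde\bW\|_{L_{q_1}}+\|\nabla^2\widetilde\bW\|_{L_{q_2}}$ rather than your (also valid) multiplicative route through \eqref{holder2} and \eqref{0:2}, and that \eqref{Stokes:Lorentz:2:pert} is obtained from \eqref{Stokes:Lorentz:2} alone, so the appeal to Lemma \ref{l:weight:lorentz} is not needed there.
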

\begin{proof}
We follow the idea introduced in the proof of Theorem \ref{thm:max:pert}. Therefore, we want to apply the estimates \eqref{Stokes:Lorentz:1} and \eqref{Stokes:Lorentz:2}  to the system \eqref{Stokes:fixed1}. For this purpose we need to estimate the RHS of the latter. Again, we show the details for the most restrictive term in \eqref{div:pert}. We have 
\begin{multline} \label{est:div:10}
    \| \dv_{\Omega_0}\left( \BD_{\Omega_0}(\widetilde \bW)-
 \BD (\widetilde \bW) \right) \|_{L_{p,1}(0,T;L_{q_i}(\HS))} \leq \\
 C\| Id - \nabla \Phi \|_\infty \|\nabla^2 \widetilde \bW\|_{L_{p,1}(0,T;L_{q_i}(\HS))}+
 C\| |\nabla^2 \Phi | \, |\nabla \widetilde \bW| \|_{L_{p,1}(0,T;L_{q_i}(\HS))},
\end{multline}
where we denote $\nabla^2 \Phi:=\nabla^2_x\Phi(t,\Phi^{-1}(t,y))$ similarly as in \eqref{est:div:1}.
We have 
\begin{equation}
\begin{aligned}
&\left\| \||\nabla^2 \Phi|\,|\nabla \tbW| \|_{L_{q_2}(\HS)} \right\|_{L_{p,1}(\R_+)} \leq \left\| \|\nabla^2\Phi\|_{L_{q_2}(\HS)}\|\nabla \tbW\|_{L_\infty(\HS)}  \right\|_{L_{p,1}(\R_+)}\\
&\leq \|\nabla^2 \Phi\|_{L_{q_2}(\HS)} \|\nabla \tbW\|_{L_{p,1}(\R_+,L_{\infty}(\HS))} \leq \epsilon \|\nabla \tbW\|_{L_{p,1}(\R_+,L_\infty(\HS))}\\
& \leq \epsilon \left( \|\nabla^2 \tbW\|_{L_{p,1}(\R_+,L_{q_1}(\HS))} + \|\nabla^2 \tbW\|_{L_{p,1}(\R_+,L_{q_2}(\HS))} 
\right)
\end{aligned}
\end{equation}
and
\begin{equation}
\begin{aligned}
&\left\| \||\nabla^2 \Phi|\,|\nabla \tbW| \|_{L_{q_1}(\HS)} \right\|_{L_{p,1}(\R_+)} \leq \left\| \|\nabla^2\Phi\|_{L_{d}(\HS)}\|\nabla \tbW\|_{L_{q_2}(\HS)}  \right\|_{L_{p,1}(\R_+)}\\
&\leq C \left( \|\nabla^2 \Phi\|_{L_{q_1}(\HS)}+ \|\nabla^2 \Phi\|_{L_{q_2}(\HS)}\right) \|\nabla \tbW\|_{L_{p,1}(\R_+,L_{q_2}(\HS))}\\ 
& \leq \epsilon \|\nabla^2 \tbW\|_{L_{p,1}(\R_+,L_{q_1}(\HS))}.  
\end{aligned}
\end{equation}
Estimating similarly the other terms on the RHS of \eqref{Stokes:fixed1}, we can apply \eqref{Stokes:Lorentz:1} to obtain \eqref{Stokes:Lorentz:1:pert}.
In order to prove \eqref{Stokes:Lorentz:2:pert}
we also need the weighted estimate. By Sobolev imbedding we have $\frac{1}{q_1}=\frac{1}{d}+\frac{1}{q_1^*}$, therefore 
\begin{equation} \label{pert:1}
\begin{aligned}
&\left\| \|(1+t)|\nabla^2 \Phi|\,|\nabla \tbW| \|_{L_{q_1}(\HS)} \right\|_{L_{p,1}(\R_+)}\leq \left\| \|\nabla^2\Phi\|_{L_d(\HS)}\|(1+t)\nabla \tbW\|_{L_{q_2}}  \right\|_{L_{p,1}(\R_+)}\\
&\leq \|\nabla^2 \Phi\|_{L_\infty(\HS)} \|(1+t)\nabla^2 \tbW\|_{L_{p,1}(\R_+,L_{q_1}(\HS))} \leq \epsilon \|(1+t)\nabla^2 \tbW\|_{L_{p,1}(\R_+,L_{q_1}(\HS))}.
\end{aligned}
\end{equation}
Next, by H\"older inequality, 
\begin{equation} \label{pert:2}
\begin{aligned}
&\left\| \|(1+t)|\nabla^2 \Phi|\,|\nabla \tbW| \|_{L_{q_0}(\HS)} \right\|_{L_{p,1}(\R_+)}\leq \left\| \|\nabla^2\Phi\|_{L_{q_1}(\HS)}\|(1+t)\nabla \tbW\|_{L_{q_2}(\HS)}  \right\|_{L_{p,1}\R_+}\\
&\leq \|\nabla^2 \Phi\|_{L_{q_1}(\HS)} \|(1+t)\nabla^2 \tbW\|_{L_{p,1}(\R_+,L_{q_1}(\HS))} \leq \epsilon \|(1+t)\nabla^2 \tbW\|_{L_{p,1}(\R_+,L_{q_1}(\HS))}.
\end{aligned}
\end{equation}
Now we can apply \eqref{Stokes:Lorentz:2} to obtain \eqref{Stokes:Lorentz:2:pert}.
 
\end{proof}

\begin{cor} Under the assumptions of Lemma \ref{l:Lorentz:pert} we have
\begin{align} \label{Stokes:Lorentz:pert}
\|\bW\|_{W^{2,1}_{q_i,(p,1)}(\Omega_0 \times \R_+)} & \leq C\Big(\|(1+t)(\bF,\nabla \Gdiv, \pd_t \bG, \nabla \bH, \Lambda^{1/2}\bH)\|_{L_{p,1}(\BR_+, L_{q_{i-1}}(\Omega_0))} \nonumber\\
&+ \|(\bF,\nabla \Gdiv, \pd_t \bG, \nabla \bH, \Lambda^{1/2}\bH)\|_{L_{p,1}(\BR_+, L_{q_i}(\Omega_0))} \\ 
&+\|\bU_0\|_{L_{q_{i-1}}(\Omega_0)}
+ \|\bU_0\|_{B^{2-2/p}_{q_i,1}(\HS))}\Big) \nonumber\\
& +\epsilon \Big( \|(1+t)\nabla^2 \bW\|_{L_{p,1}(\BR_+, L_{q_1}(\Omega_0))} + \|\nabla^2 \bW\|_{L_{p,1}(\BR_+, L_{q_2}(\Omega_0))} \Big). \nonumber
\end{align}
%where the space $W^{2,1}_{\oq,(p,1)}$ is defined in \eqref{def:space}. 
\end{cor}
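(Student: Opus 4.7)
The corollary is essentially a repackaging of Lemma \ref{l:Lorentz:pert} in the language of the norm $\|\cdot\|_{W^{2,1}_{q_i,(p,1)}}$ from \eqref{def:space}. By that definition,
\begin{equation*}
\|\bW\|_{W^{2,1}_{q_i,(p,1)}(\Omega_0\times\R_+)} = \|\bW\|_{L_\infty(\R_+;\dot B^{2-2/p}_{q_i,1}(\Omega_0))} + \|\pd_t\bW,\nabla^2\bW\|_{L_{p,1}(\R_+;L_{q_i}(\Omega_0))},
\end{equation*}
so I would bound the two summands separately. The second summand is controlled immediately by \eqref{Stokes:Lorentz:1:pert}, which supplies precisely the unweighted $L_{p,1}$ terms on the right-hand side together with the initial datum $\|\bU_0\|_{B^{2-2/p}_{q_i,1}}$, as well as the $\epsilon$-contributions from the geometry; these are kept on the RHS to be absorbed at the nonlinear stage once $\epsilon$ is chosen small.

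For the first summand I would invoke the Lorentz-valued analogue of the parabolic trace embedding \eqref{para}: if $\pd_t u,\nabla^2 u\in L_{p,1}(\R_+;L_{q_i})$ and $u(0)\in\dot B^{2-2/p}_{q_i,1}$, then $u\in C_b(\R_+;\dot B^{2-2/p}_{q_i,1})$ with
\begin{equation*}
\|u\|_{L_\infty(\R_+;\dot B^{2-2/p}_{q_i,1})}\leq C\bigl(\|u(0)\|_{\dot B^{2-2/p}_{q_i,1}}+\|\pd_t u,\nabla^2u\|_{L_{p,1}(\R_+;L_{q_i})}\bigr).
\end{equation*}
This follows by real interpolation with parameters $(1/2,1)$ between two copies of \eqref{para} at exponents $p_1<p<p_2$ chosen so that $1/p=\frac{1}{2p_1}+\frac{1}{2p_2}$, exactly in the spirit of \eqref{interp:lorentz} used throughout Section \ref{sec:Lorentz:lin}; on the target side the identity $(\dot B^{2-2/p_1}_{q_i,p_1},\dot B^{2-2/p_2}_{q_i,p_2})_{1/2,1}=\dot B^{2-2/p}_{q_i,1}$ coming from \eqref{0:3a} produces precisely the third index $1$ that matches the definition of $W^{2,1}_{q_i,(p,1)}$. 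Applying this embedding to $\bW$ and plugging in \eqref{Stokes:Lorentz:1:pert} on the right yields the desired $L_\infty(\R_+;\dot B^{2-2/p}_{q_i,1})$ bound.

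To account for the extra RHS ingredients $\|(1+t)(\ldots)\|_{L_{p,1}(\R_+;L_{q_{i-1}})}$ and $\|\bU_0\|_{L_{q_{i-1}}}$---which do not appear in the pure trace bound---I would simply add the companion estimate \eqref{Stokes:Lorentz:2:pert} from Lemma \ref{l:Lorentz:pert}, which delivers exactly these contributions (and is, in fact, what will be needed in the nonlinear scheme of Section \ref{sec:nonlin:Lorentz} to close iterations). Summing the three bounds produces \eqref{Stokes:Lorentz:pert}.

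The main obstacle is not the algebra but the verification of the Lorentz parabolic trace inequality with the sharp third index $1$. Everything else is bookkeeping: matching each component of the $W^{2,1}_{q_i,(p,1)}$-norm with the corresponding piece of Lemma \ref{l:Lorentz:pert} and carrying the $\epsilon$-terms unchanged. The real-interpolation template that was used to pass from the $L_p$ estimates of Theorems \ref{thm:max}--\ref{thm:weight} to the Lorentz versions \eqref{Stokes:Lorentz:1} and \eqref{Stokes:Lorentz:2} applies verbatim to the trace embedding, so no new analytic input beyond Lemma \ref{l:Lorentz:pert} is needed.
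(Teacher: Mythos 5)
Your proposal is correct and follows what is evidently the intended route: the paper states this corollary without proof as a direct consequence of Lemma \ref{l:Lorentz:pert}, and the natural argument is exactly yours — bound the $L_{p,1}$-derivative part of the $W^{2,1}_{q_i,(p,1)}$-norm by \eqref{Stokes:Lorentz:1:pert}, obtain the $L_\infty(\R_+;\dot B^{2-2/p}_{q_i,1})$ part from the Lorentz version of the trace embedding \eqref{para} via the same $(1/2,1)$ real-interpolation template used throughout Section \ref{sec:Lorentz:lin}, and add \eqref{Stokes:Lorentz:2:pert} to supply the remaining right-hand-side ingredients while carrying the $\epsilon$-terms unchanged.
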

\vskip5mm
Next, we extend Lemma \ref{l:weight:lorentz} to the perturbed half space.
\begin{lem}
Let $\bW$ solve \eqref{eq:1:pert}, where $\Omega_0$ is defined in \eqref{omega}. Let $p,q_0,q_1$ satisfy the assumptions of Theorem \ref{thm:gwp}. Then for $i=0,1$ we have
\begin{align} \label{weight:lorentz:pert}
&\|(1+t)(\pd_t\bW, \nabla^2\bW, \nabla \sfP)\|_{L_{(p,1)}(\BR_+, L_{q}(\Omega_0))} \nonumber\\
&\quad \leq C\left(\|\bU_0\|_{\dot B^{2(1-1/p)}_{q,1}(\Omega_0)} 
+ \|\bW, (1+t)(\bF, \nabla \Gdiv, \pd_t\bG, \nabla \bH, \Lambda^{1/2}\bH)\|_{L_{(p,1)}(\BR_+, L_{q}(\Omega_0))}\right)\\
&\quad +\epsilon \|(1+t)\nabla^2 \bW\|_{L_{p,1}(\BR_+, L_{q_1}(\Omega_0))}.\nonumber
\end{align}
\end{lem}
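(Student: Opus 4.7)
The plan mirrors the derivation of \eqref{Stokes:Lorentz:1:pert}--\eqref{Stokes:Lorentz:2:pert} in Lemma \ref{l:Lorentz:pert}, but now on the weighted level, starting from Lemma \ref{l:weight:lorentz} instead of Lemma \ref{l:Stokes:Lorentz}. The idea is to pull the perturbed half-space system back to $\HS$ via the measure preserving diffeomorphism $\Phi:\Omega_0\to\HS$ of Section \ref{sec:lin:pert}, treat the mismatch between $\dv_{\Omega_0}\BT_{\Omega_0}$ and $\dv\BT$ as a new right-hand side, and absorb the latter by the smallness of $\epsilon$.

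\emph{Step 1 — pullback.} Set $\tbW(t,y)=\bW(t,\Phi^{-1}(y))$ and $\widetilde\sfP(t,y)=\sfP(t,\Phi^{-1}(y))$. Rewriting \eqref{eq:1:pert} in $y$-coordinates produces a system structurally identical to \eqref{Stokes:fixed2}, in which the pieces
\begin{equation*}
\dv_{\Omega_0}\BT_{\Omega_0}(\tbW,\widetilde\sfP)-\dv\BT(\tbW,\widetilde\sfP),\qquad
\dv(\mathrm{Id}-A_{\Omega_0})\tbW+\dv A_{\Omega_0}\widetilde\bG
\end{equation*}
and the analogous boundary contribution appear on the right-hand sides. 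Because $\Phi$ is time-independent, the $(1+t)$-weight commutes with the pullback; the weighted unknowns $(1+t)\tbW,\;(1+t)\widetilde\sfP$ satisfy the corresponding weighted system, with the additional lower-order term $-\tbW$ in the momentum equation, as in the proof of Theorem \ref{thm:weight:pert}.

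\emph{Step 2 — apply Lemma \ref{l:weight:lorentz}.} Inequality \eqref{weight:lorentz} applied in $\HS$ at the Lebesgue index $q\in\{q_0,q_1\}$ bounds
\begin{equation*}
\|(1+t)(\pd_t\tbW,\nabla^2\tbW,\nabla\widetilde\sfP)\|_{L_{(p,1)}(\R_+;L_q(\HS))}
\end{equation*}
by $\|\tbU_0\|_{\dot B^{2(1-1/p)}_{q,1}(\HS)}$, the $L_{(p,1)}(L_q)$ norm of the original weighted right-hand side, the lower-order term $\|\tbW\|_{L_{(p,1)}(\R_+;L_q(\HS))}$, and the Lorentz norms of the perturbation pieces above.

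\emph{Step 3 — the $\epsilon$ term.} The perturbation pieces are handled exactly as in \eqref{pert:1}--\eqref{pert:2}; the delicate representative term is
\begin{equation*}
\bigl\|(1+t)\,|\nabla^2\Phi|\,|\nabla\tbW|\bigr\|_{L_{(p,1)}(\R_+;L_q(\HS))}.
\end{equation*}
For $q=q_1$, Hölder with $\tfrac{1}{q_1}=\tfrac{1}{d}+\tfrac{1}{q_2}$, the Sobolev embedding $\dot W^1_{q_1}\hookrightarrow L_{q_2}$, and interpolation of $\|\nabla^2\Phi\|_{L_d}$ between $\|\nabla^2\Phi\|_{L_{q_1}}$ and $\|\nabla^2\Phi\|_{L_{q_2}}$ (using $q_1<d<q_2$) together with \eqref{prop:phi} yield a bound by $\epsilon\,\|(1+t)\nabla^2\tbW\|_{L_{(p,1)}(\R_+;L_{q_1}(\HS))}$. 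For $q=q_0$, one uses $\tfrac{1}{q_0}=\tfrac{1}{q_1}+\tfrac{1}{q_2}$ and $\|\nabla^2\Phi\|_{L_{q_1}}\lesssim\epsilon$. Crucially, all spatial inequalities are applied pointwise in $t$, so the Lorentz-in-time norm passes through unchanged. The terms involving $\mathrm{Id}-A_{\Omega_0}$, $\widetilde\bG$ and the boundary mismatch are bounded analogously. Finally, pulling back to $\Omega_0$ is harmless because $\Phi$ is a volume-preserving $C^1$ diffeomorphism with $\|\nabla\Phi-\mathrm{Id}\|_{L_\infty}$ small, giving \eqref{weight:lorentz:pert}.

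The main obstacle is purely bookkeeping: ensuring that the pointwise-in-$t$ spatial Hölder/Sobolev estimates of Step 3 pass through the Lorentz-in-time norm without loss. This is true because the Lorentz norm is monotone with respect to pointwise inequalities of the absolute value, which is exactly the structure exploited here; once granted, the argument reduces mechanically to the half-space weighted Lorentz bound of Lemma \ref{l:weight:lorentz}.
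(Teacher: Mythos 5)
Your proposal is correct and follows essentially the same route as the paper: pull the system back to $\HS$ via $\Phi$, apply the half-space weighted Lorentz estimate of Lemma \ref{l:weight:lorentz}, and absorb the coordinate-change terms, whose most restrictive representatives $\|(1+t)|\nabla^2\Phi|\,|\nabla\tbW|\|_{L_{p,1}(\R_+;L_{q_i})}$ are handled exactly by the Hölder/Sobolev/interpolation bounds already recorded in \eqref{pert:1}--\eqref{pert:2}, yielding the $\epsilon$-term. Your Step 3 for $q_1$ (interpolating $\|\nabla^2\Phi\|_{L_d}$ between $L_{q_1}$ and $L_{q_2}$) is in fact the cleaner version of what the paper intends there, so nothing is missing.
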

\begin{proof}
Similarly as in the proof of Lemma \ref{l:Lorentz:pert}, the most restrictive terms are 
$$
\left\|(1+t)|\nabla^2 \Phi|\,|\nabla \tbW| \|_{L_{q_i}} \right\|_{L_{p,1}(\R_+)} \quad \text{for} \;\; i=0,1, 
$$
coming from the estimate of
$$
\|(1+t)\dv_{\Omega_0}\BT_{\Omega_0}(\widetilde \bW, \widetilde\sfP)-
(1+t)\dv \BT (\widetilde \bW, \widetilde\sfP)\|_{L_{p,1}(\R_+,L_{q_i})}.
$$
These however have been already derived in \eqref{pert:1} and \eqref{pert:2}.

\end{proof}

\section{Nonlinear estimates in Lorentz spaces} \label{sec:nonlin:Lorentz}

In what follows we assume that our solutions belongs to the following class of the regularity
\begin{equation}
    v \in W^{2,1}_{q_2,(p,1)}(\Omega_0 \times \BR_+)\cap W^{2,1}_{q_1,(p,1)}(\Omega_0 \times \BR_+), \quad \mbox{ and \  \ \ }
    tv \in \dot W^{2,1}_{q_1,(p,1)}(\Omega \times \BR_+) 
%    \mbox{ \ and \ }  v \in L_{p,1}(\R_+;L_{q_1(\Omega))
\end{equation}
with $q_0,q_1,q_2$ and $p$ satisfying the assumptions of Theorem \ref{thm:Lorentz}, where the space $W^{2,1}_{q,(p,1)}$ is defined in \eqref{def:space}. 
%
%Below we show the elementary lemma giving us the important information that our solutions are indeed in the nonhomogenous space $W^{2,1}_{q_2,(p,1)}(\Omega \times \R_+)$ (without the dot).
%\begin{lem} \label{lem:imbed}
%Let $q_1 < q_2$, then
%\begin{equation}
%    \dot W^{2,1}_{q_2,(p,1)}(\Omega \times \R_+) \cap L_{p,1}(\R_+;L_{q_1}(\Omega)) \subset 
%     W^{2,1}_{q_2,(p,1)}(\Omega \times \R_+).
%\end{equation}
%\end{lem}
%
%The proof comes from the following observation. To get the conclusion of the lemma we need just to show that 
%the LHS contains the space $L_{p,1}(\R_+;L_{q_2})$. So we note that there exists $\sigma_1>0$ such that
%\begin{equation}
%    L_{q_1}(\Omega) \subset \dot B^{-\sigma}_{q_2,\infty}(\Omega).
%\end{equation}
%To avoid  problems with the shape of the domain one can consider the functions over the whole space, by a suitable extension. Then from the interpolation we find that
%\begin{equation}
%    L_{q_2}(\Omega) \subset \dot B^{0}_{q_2,1}(\Omega) \subset \dot B^{2}_{q_2,\infty}(\Omega) \cap \dot B^{-\sigma_1}_{q_2,\infty}(\Omega).
%\end{equation}
%All with suitable estimates. The lemma is proved.

In this section we estimate the RHS of \eqref{eq-fixed} in the regularity required in the estimate \eqref{Stokes:Lorentz}. 
Therefore, we need to find the bounds on 
\begin{align}
&\|\bF(\bW),\nabla \dv \bG(\bW), \nabla \bH(\bW)\|_{L_{p,1}(\R_+;L_{q_2}(\Omega_0))}, \label{lhs:1}\\
&\|(1+t)\big(\bF(\bW),\nabla \dv \bG(\bW), \nabla \bH(\bW)\big)\|_{L_{p,1}(\R_+;L_{q_1}(\R^d_+)) \cap L_{p,1}(\R_+;L_{q_0}(\Omega_0))  },\label{lhs:2} \\
&\|\de_t \bG(\bW)\|_{L_{p,1}(\R_+;L_{q_2}(\Omega_0))},\label{lhs:3}\\
&\|(1+t) \de_t\bG(\bW)\|_{L_{p,1}(\R_+;L_{q_1}(\R^d_+)) \cap L_{p,1}(\R_+;L_{q_0}(\Omega_0))}\label{lhs:4}\\
&\|\Lambda^{1/2} \bH(\bW)\|_{L_{p,1}(\R_+;L_{q_2}(\Omega_0))}, \label{lhs:5}\\
&\|(1+t) \Lambda^{1/2} \bH(\bW)\|_{L_{p,1}(\R_+;L_{q_1}(\Omega_0)) \cap L_{p,1}(\R_+;L_{q_0}(\Omega_0))}.\label{lhs:6}
\end{align}
For this purpose we need to estimate the nonlinearities $N_1-N_4$
defined in \eqref{nonlin}. Similarly as for the $L_p-L_q$ nonlinear estimates, there is no difference between the half space and perturbed half space cases. Therefore we omit the spatial domain $\Omega_0$ in the notation.  
%\begin{align}
%&\nabla v \int_0^t \nabla^2_y v(\tau,y)\dtau, \quad \nabla^2 v \int_0^t \nabla_y v(\tau,y)\dtau,\label{nonlin:1}\\
%&%\nabla v \int_0^t \nabla_y v(\tau,y)\dtau, \quad 
%\nabla q \int_0^t \nabla_y v(\tau,y)\dtau, \quad v \int_0^t \nabla_y v(\tau,y)\dtau. \label{nonlin:2}
%\end{align}
\subsection{Estimates without time weight}
\begin{lem} \label{l:nonlin1}
Assume that    
\begin{align}
&\frac{1}{d}=\frac{1}{p'q_1}+\frac{1}{pq_2}, \label{c10}\\
&\frac{2d}{3}<q_1<d<q_2=q_1^*. \label{c11}
\end{align}
Then 
\begin{equation} \label{est:10}
\begin{aligned}
\|\bF(v), \nabla \div \bG(v),\nabla \bH(\bW)\|_{L_{p,1}(\R_+;L_{q_2})} 
\leq C \left( \|v\|_{\Xi}^2 + \|\nabla^2 v\|_{L_{p,1}(\R_+;L_{q_2})}^2 + \|t\nabla^2 v\|_{L_{p,1}(\R_+;L_{q_1})}^2  \right),   
%\\[7pt]
%& \|(\nabla_u-\nabla)q\|_{L_{p,1}(\R_+;L_{q_2}(\R^d_+))} \leq 
%\|\nabla q\|_{L_{p,1}(\R_+,L_q(\HS))}\|\nabla^2 v\|_{L_{p,1}(\R_+,L_q(\HS))}^{1/p} \|t\nabla^2 v\|_{L_{p,1}(\R_+,L_{q_1}(\HS))}^{1/p'} \label{est:11} 
\end{aligned}
\end{equation}
where the space $\Xi$ is defined in \eqref{sol:space}.
\end{lem}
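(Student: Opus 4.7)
The plan is to reduce the three nonlinear quantities $\bF(v)$, $\nabla\dv\bG(v)$, $\nabla\bH(v)$ to the model expressions $N_1=(\pd_t,\nabla^2)v\int_0^t\nabla v\,d\tau$ and $N_2=\nabla v\int_0^t\nabla^2 v\,d\tau$ from \eqref{nonlin} via the structural identities in \eqref{struct:F}--\eqref{struct:H}, and then to bound each $N_i$ in $L_{p,1}(\R_+;L_{q_2})$. The common recipe is: a spatial Hölder split that places one time-integral factor in $L^\infty(\Omega)$, followed by the Lorentz Hölder inequality \eqref{0:1} pairing an $L_{p,1}$-in-time factor with an $L^\infty$-in-time factor, and finally the observation that a total time integral such as $\int_0^\infty\|\nabla v\|_\infty\,d\tau$ or $\int_0^\infty\|\nabla^2 v\|_{L_{q_2}}\,d\tau$ is controlled by $\|v\|_\Xi$ (via \eqref{sup:L1}, or by Hölder with weight $(1+\tau)^{-1}\in L_{p'}(\R_+)$).

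Concretely, for $N_1$ I would estimate
\[
\|N_1(\cdot,t)\|_{L_{q_2}}\leq \|(\pd_t,\nabla^2)v(\cdot,t)\|_{L_{q_2}}\int_0^t\|\nabla v(\cdot,\tau)\|_{L^\infty}\,d\tau,
\]
and take the $L_{p,1}(\R_+)$ norm using \eqref{0:1} to reach
$\|N_1\|_{L_{p,1}(L_{q_2})}\leq \|(\pd_t,\nabla^2)v\|_{L_{p,1}(L_{q_2})}\cdot C\|v\|_\Xi$,
where the $(\pd_t,\nabla^2)v$ Lorentz norm is understood to be encoded in the $\|\nabla^2 v\|_{L_{p,1}(L_{q_2})}$ factor on the right-hand side of \eqref{est:10}, as is standard in parabolic maximal regularity. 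For $N_2$ the spatial split is reversed, placing $\nabla v$ in $L^\infty(\Omega)$ and the time integral of $\nabla^2 v$ in $L_{q_2}(\Omega)$; the time-integral factor is then dominated by $\|v\|_\Xi$ through $\int_0^\infty\|\nabla^2 v\|_{L_{q_2}}\leq \|(1+\tau)^{-1}\|_{L_{p'}}\,\|(1+\tau)\nabla^2 v\|_{L_p(L_{q_2})}$, which leaves the product
$\|N_2\|_{L_{p,1}(L_{q_2})}\leq C\|\nabla v\|_{L_{p,1}(L^\infty)}\cdot\|v\|_\Xi$.

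The hardest step is controlling $\|\nabla v\|_{L_{p,1}(L^\infty)}$ by the three norms on the right-hand side of \eqref{est:10}. For this I would apply the Gagliardo--Nirenberg-type bound \eqref{sup:1} pointwise in time, $\|\nabla v\|_{L^\infty}\leq C\|\nabla^2 v\|_{L_{q_1}}^{1-\alpha}\|\nabla^2 v\|_{L_{q_2}}^\alpha$ with $\alpha$ as in \eqref{c:alpha}, and then combine the generalized Lorentz Hölder \eqref{holder2} with the scaling identity \eqref{0:2} to convert this into a product of Lorentz-in-time norms of $\nabla^2 v$ in $L_{q_1}$ and $L_{q_2}$. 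The $t$-weight present in the right-hand side of \eqref{est:10} enters precisely here: the $L_{q_1}$ factor is split as $\nabla^2 v=(1+t)^{-1}\cdot(1+t)\nabla^2 v$, the second piece being absorbed by $\|t\nabla^2 v\|_{L_{p,1}(L_{q_1})}$ and the $(1+t)^{-1}$ weight being integrated against the other temporal factor with an $L_{p'}$-type contribution. The arithmetic condition \eqref{c10} is exactly the balancing equation that forces the temporal and spatial exponents to match, while \eqref{c11} simultaneously guarantees $\alpha\in(0,1)$ and the Sobolev embedding $q_2=q_1^*$ on which the reduction relies.

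Once $N_1$ and $N_2$ are bounded, each inequality takes the form of a product of two norms that already appear on the right-hand side of \eqref{est:10}; applying $AB\leq \tfrac12(A^2+B^2)$ yields the squared-norm form of the conclusion. I expect the main obstacle to be not the individual Hölder applications but the precise matching of weights and exponents for Lorentz spaces in the $\|\nabla v\|_{L_{p,1}(L^\infty)}$ step, where the algebra of \eqref{c10}--\eqref{c11} becomes essential, ensuring that the weighted norm $\|t\nabla^2 v\|_{L_{p,1}(L_{q_1})}$ (rather than an unweighted $\|\nabla^2 v\|_{L_{p,1}(L_{q_1})}$, which is not assumed in the hypotheses) is sufficient to close the estimate.
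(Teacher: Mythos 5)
Your reduction of $\bF$, $\nabla\dv\bG$, $\nabla\bH$ to $N_1,N_2$ via \eqref{struct:F} is the same as the paper's, and your treatment of $N_1$ is acceptable: bounding $\int_0^t\|\nabla v\|_{L_\infty}\,d\tau$ by $\|v\|_\Xi$ through \eqref{sup:L1} and pairing it with $\|\nabla^2 v\|_{L_{p,1}(\R_+;L_{q_2})}$ is a harmless variant of the paper's sharper bound \eqref{e4b} (the $\pd_t v$ component, which neither \eqref{est:10} nor your argument covers literally, is the same notational looseness the paper itself resolves later in \eqref{Fq2:final} via $\|v\|_{\dot W^{2,1}_{q_2,(p,1)}}$).

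The genuine gap is in your $N_2$ step. By estimating $\int_0^t\|\nabla^2 v\|_{L_{q_2}}\,d\tau$ uniformly in $t$ by $\|v\|_\Xi$ you discard the only available source of a positive power of $t$, and you are then forced to control $\|\nabla v\|_{L_{p,1}(\R_+;L_\infty)}$, which is \emph{not} bounded by the right-hand side of \eqref{est:10}. Indeed, under \eqref{c10} the Gagliardo--Nirenberg bound reads $\|\nabla v\|_{L_\infty}\lesssim\|\nabla^2 v\|_{L_{q_1}}^{1/p'}\|\nabla^2 v\|_{L_{q_2}}^{1/p}$, and your proposed splitting $\|\nabla^2 v\|_{L_{q_1}}^{1/p'}=(1+t)^{-1/p'}\|(1+t)\nabla^2 v\|_{L_{q_1}}^{1/p'}$ cannot be closed: since $(1+t)^{-1/p'}$ lies only in $L_{p',\infty}(\R_+)$, a three-factor Lorentz--H\"older landing in $L_{p,1}$ would require $\tfrac1p=\tfrac1{p'}+\tfrac1{P_2}+\tfrac1{P_3}$, which is impossible because $p>2$ here (in fact $p>4$ by \eqref{require.1}); and if you instead bound $(1+t)^{-1/p'}\le 1$, the two-factor H\"older with \eqref{0:2} leaves $\|(1+t)\nabla^2 v\|_{L_{p,1}(\R_+;L_{q_1})}^{1/p'}$, whose unweighted part $\|\nabla^2 v\|_{L_{p,1}(\R_+;L_{q_1})}$ appears neither in \eqref{est:10} nor in $\|v\|_\Xi$ (the class $\Xi$ of \eqref{sol:space} only gives $L_p$, not $L_{p,1}$, integrability in time). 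The paper's proof avoids this by the opposite order of operations: it keeps the inner integral as $\int_0^t\|\nabla^2 v\|_{L_{q_2}}\,d\tau\le t^{1/p'}\|\nabla^2 v\|_{L_p(\R_+;L_{q_2})}$, so the factor $t^{1/p'}$ remains attached to $\|\nabla v\|_{L_\infty}$, and then \eqref{e2} --- Gagliardo--Nirenberg with exponent $1/p'$ on the $L_{q_1}$ factor, which is exactly what \eqref{c10} encodes --- turns $t^{1/p'}\|\nabla v\|_{L_\infty}$ into $\|\nabla^2 v\|_{L_{q_2}}^{1/p}\|t\nabla^2 v\|_{L_{q_1}}^{1/p'}$; the two-factor Lorentz--H\"older with exponents $(p^2,p)$ and $(pp',p')$ together with \eqref{0:2} then produces precisely the weighted norm $\|t\nabla^2 v\|_{L_{p,1}(\R_+;L_{q_1})}$, and Young's inequality finishes. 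You need to rework your $N_2$ estimate along these lines; as written, the step ``control $\|\nabla v\|_{L_{p,1}(L_\infty)}$'' would fail.
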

Before we proceed with the proof, let us notice that \eqref{c10} implies
\begin{equation}
p=\frac{q_1}{d-q_1}=\frac{q_2}{d}, 
\end{equation}
therefore $p>2$ implies the lower bound $q_1>\frac{2d}{3}$.
%\begin{lem} \label{l:c}
%Let 
%\begin{equation} \label{theta:2}
%\theta_1 = d\left( \frac{1}{q_1}-\frac{1}{q_2} \right)
%\end{equation}
%Then conditions \eqref{c10} and \eqref{c11} imply $\theta_1 < 1$. 
%Moreover, we can choose $p$ such that $p>\frac{2}{\theta_1}$.
%\end{lem}

{\bf Proof of Lemma \ref{l:nonlin1}.} 
Taking into account \eqref{struct:F} we need to estimate the terms $N_1,N_2$ from \eqref{nonlin} in $L_{p,1}(\R_+;L_{q_2})$. 
Let us start with $N_2$. We have 
$$
\|\nabla v\int_0^t|\nabla^2 v|\dtau\|_{L_{q_2}}\leq 
\|\nabla v\|_{L_\infty} t^{1/p'} \| \nabla^2 v \|_{L_p(\R_+;L_{q_2})},
% \leq C \|\nabla v\|_\infty t^{1/p'}, 
$$
therefore 
\begin{equation} \label{e1}
\bign \nabla_y v \int_0^t \nabla^2_y v(\tau,y)\dtau \bign_{L_{p,1}(\R_+;L_{q_2}(\R^d_+))}
\leq \| \nabla^2 v \|_{L_p(\R_+;L_{q_2})} \bign t^{1/p'}\|\nabla v\|_\infty\bign_{L_{p,1}(\R_+)}. 
\end{equation} 
In order to estimate $\|\nabla v\|_{\infty}$ we apply \eqref{sup:1}
which yields
\begin{equation} \label{int2}
\|\nabla v\|_{L_\infty} \lesssim \|\nabla^2 v\|_{L_{q_2}}^{1-\alpha}\|\nabla^2 v\|_{L_{q_1}}^{\alpha}.  
\end{equation}
Choosing $\alpha=1-\frac{1}{p}$ in \eqref{int2} 
\begin{equation} \label{e2}
t^{1/p'}\|\nabla v\|_{L_\infty} \lesssim
\|\nabla^2 v\|_{L_{q_2}}^{1/p}\|t\nabla^2 v\|_{L_{q_1}}^{1/p'}.
%+ \|\nabla v\|_{q_2}^{1/p}\|t\nabla v\|_{q_1}^{1/p'}.
\end{equation}
Applying \eqref{holder} with $p_1=p^2,p_2=pp',r_1=p,r_2=p'$ and then \eqref{0:2} we get for the first term
\begin{equation*}
\begin{aligned}
&\bign \|\nabla^2 v\|_{q_2}^{1/p}\|t \nabla^2 v\|_{q_1}^{1/p'} \bign_{L_{p,1}(\R_+)} \; 
\leq \bign \|\nabla^2 v\|_{q_2}^{1/p} \bign_{L_{p^2,p}(\R_+)}\bign \|t\nabla^2 v\|_{q_1}^{1/p'} \bign_{L_{pp',p'}(\R_+)}\\[3pt] 
& \leq \|\nabla^2 v\|_{L_{p,1}(\R_+;L_{q_2})}^{1/p}\|t\nabla^2 v\|_{L_{p,1}(\R_+;L_{q_1})}^{1/p'}.
\end{aligned}
\end{equation*}
The second term on the RHS of \eqref{e2} can be treated in the same way, hence from \eqref{e1} we obtain 
\begin{equation} \label{e:1}
\begin{aligned}
&\bign \nabla v \int_0^t \nabla^2 v(\tau,y)\dtau \bign_{L_{p,1}(\R_+;L_{q_2}(\R^d_+))}\\[3pt] 
&\lesssim \| \nabla^2 v \|_{L_p(\R_+;L_{q_2})}
\|\nabla^2 v\|_{L_{p,1}(\R_+;L_{q_2})}^{1/p}\|t\nabla^2 v\|_{L_{p,1}(\R_+;L_{q_1})}^{1/p'}\\[3pt]
&\lesssim \| \nabla^2 v \|_{L_p(\R_+;L_{q_2})}^2 + \|\nabla^2 v\|_{L_{p,1}(\R_+;L_{q_2})}^2 + \|t\nabla^2 v\|_{L_{p,1}(\R_+;L_{q_1})}^2. 
\end{aligned}
\end{equation}
To estimate the term $N_1$ in \eqref{nonlin}, observe that by \eqref{int2} we and \eqref{holder2}  
\begin{equation} \label{e5}
\begin{aligned}
&\int_0^t\|\nabla v(\tau)\|_{L_{\infty}} \dtau \leq 
\int_0^t \tau^{-\alpha}\|\tau\nabla^2 v(\tau)\|_{L_{q_1}}^\alpha\|\nabla^2 v\|_{L_{q_2}}^{1-\alpha}\\[3pt]
&\leq \|t^{-\alpha}\|_{L_{1/\alpha,\infty}(\R_+)}\bign\| t\nabla^2 v \|_{L_{q_1}}^\alpha \bign_{L_{a,1/\alpha}(\R_+)}
\bign\| \nabla^2 v \|_{L_{q_2}}^{1-\alpha} \bign_{L_{b,1/(1-\alpha)}(\R_+)}\\[3pt]
&\lesssim \|t\nabla^2 v\|_{L_{p,1}(\R_+;L_{q_1})}^\alpha\|\nabla^2 v\|_{L_{p,1}(\R_+;L_{q_2})}^{1-\alpha}
\end{aligned}
\end{equation} 
provided 
\begin{equation} \label{c2}
\alpha+\frac{1}{a}+\frac{1}{b}=1, \quad \alpha a=(1-\alpha)b=p.
\end{equation}
Conditions \eqref{c2} imply 
\begin{equation} \label{c3}
\alpha=1-\frac{1}{p},
\end{equation}
which is in accordance with relation necessary to obtain \eqref{e2}. %Moreover, \eqref{c3} implies that 
%\eqref{c0} is equivalent to \eqref{c10}.
Therefore \eqref{e5} reads 
\begin{equation} \label{e4b}
\begin{aligned}
&\int_0^t\|\nabla v(\tau)\|_{L_\infty} \dtau \leq C\|t\nabla^2 v\|_{L_{p,1}(\R_+;L_{q_1})}^{1/p'}\|\nabla^2 v\|_{L_{p,1}(\R_+;L_{q_2})}^{1/p}
\end{aligned}
\end{equation}
This allows us to deduce 
\begin{equation} \label{e:2}
\begin{aligned}
&\left\| \|\nabla^2 v \int_0^t \nabla v(\tau) d\tau\|_{L_{q_2}} \right\|_{L_{p,1}(\R_+)} \leq \left\| \|\nabla^2 v\|_{L_{q_2}} \int_0^t\|\nabla v(\tau) d\tau\|_{L_\infty} \right\|_{L_{p,1}(\R_+)}\\ 
&\leq \|\nabla^2 v\|_{L_{p,1}(\R_+,L_{q_2})}^{1+1/p} \|t\nabla^2 v\|_{L_{p,1}(\R_+,L_{q_1})}^{1/p'}
\leq C \left( \|t\nabla^2 v\|_{L_{p,1}(\R_+;L_{q_1})}^2 + \|\nabla^2 v\|_{L_{p,1}(\R_+;L_{q_2})}^2  \right). 
\end{aligned}
\end{equation}
Estimates \eqref{e:1} and \eqref{e:2} imply \eqref{est:10}.
%together with \eqref{nonlin:1} and \eqref{nonlin:4}  

%In the same way, for the nonlinearities \eqref{nonlin:2} we obtain 
%\begin{equation} \label{est:3}
%\begin{aligned}
%&\left\| \|\nabla v \int_0^t \nabla v(\tau) d\tau\|_q %\right\|_{L_{p,1}(\R_+)}\\ &\lesssim 
%\|\nabla v\|_{L_{p,1}(\R_+,L_q(\HS))}\|\nabla^2 v\|_{L_{p,1}(\R_+,L_q(\HS))}^{1/p} \|t\nabla^2 v\|_{L_{p,1}(\R_+,L_{q_1}(\HS))}^{1/p'}
%\end{aligned}
%\end{equation}
%and 
%begin{equation} \label{est:4}
%\begin{aligned}
%&\left\| \|\nabla q \int_0^t \nabla v(\tau) d\tau\|_q %\right\|_{L_{p,1}(\R_+)}\\ &\lesssim 
%\|\nabla q\|_{L_{p,1}(\R_+,L_q(\HS))}\|\nabla^2 v\|_{L_{p,1}(\R_+,L_q(\HS))}^{1/p} \|t\nabla^2 v\|_{L_{p,1}(\R_+,L_{q_1}(\HS))}^{1/p'},
%\end{aligned}
%\end{equation}
%which implies \eqref{est:11}. 
\begin{flushright}
$\square$  
\end{flushright}

Finally, we estimate the time derivative of the RHS of the divergence equation.
\begin{lem} Assume $p,q_1,q_2$ satisfy \eqref{c10}-\eqref{c11}. Then 
\begin{equation} \label{est:N4}
\begin{aligned}
&\|\de_t \bG(v)\|_{L_{p,1}(\R_+;L_{q_2})} \leq \\ 
& \leq C \left( \|v\|_{W^{2,1}_{q_2,{p,1}}}^2 + \|t\nabla^2 v\|_{L_{p,1}(\R_+,L_{q_1})}^2
+ \|v\|^2_{L_{(p,1)}(\R_+,L_{q_1}) }\right).
\end{aligned}
\end{equation}
\end{lem}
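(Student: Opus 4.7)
Following the strategy used throughout Section~\ref{sec:nonlin:Lorentz}, I would first exploit the structural relation \eqref{struct:G} together with $N_4 = v\int_0^t\nabla v\,\dtau$ to expand
\[
\partial_t\bG(v)\;\sim\;\partial_t N_4 \;=\; \partial_t v\cdot\!\int_0^t\!\nabla v\,\dtau \;+\; v\nabla v,
\]
so the lemma reduces to bounding each of these two summands in $L_{p,1}(\R_+;L_{q_2})$.

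For the first summand I would use H\"older in space with the splitting $1/q_2=1/q_2+1/\infty$,
\[
\Bigl\|\partial_t v\!\int_0^t\!\nabla v\,\dtau\Bigr\|_{L_{q_2}}
\;\leq\;\|\partial_t v\|_{L_{q_2}}\int_0^t\|\nabla v\|_{L_\infty}\,\dtau,
\]
and observe that the inner time integral admits the \emph{$t$-uniform} bound \eqref{e4b}. Taking $L_{p,1}(\R_+)$ in time and setting $X:=\|v\|_{W^{2,1}_{q_2,(p,1)}}$, $Y:=\|t\nabla^2 v\|_{L_{p,1}(L_{q_1})}$, this yields
\[
\Bigl\|\partial_t v\!\int_0^t\!\nabla v\,\dtau\Bigr\|_{L_{p,1}(\R_+;L_{q_2})}
\;\leq\; C\,X^{1+1/p}Y^{1/p'},
\]
which by weighted AM--GM (exponents $(p+1)/p$ and $(p-1)/p$ in $(0,2)$ and summing to $2$) is absorbed into $X^2+Y^2$, matching the first two quadratic terms on the right-hand side of \eqref{est:N4}. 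This is precisely the template of \eqref{e:1}--\eqref{e:2}, with $\nabla^2 v$ on the outside replaced by $\partial_t v$.

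For the second summand $v\nabla v$, H\"older in space with $1/q_2=1/\infty+1/q_2$ gives $\|v\nabla v\|_{L_{q_2}}\leq \|v\|_{L_\infty}\|\nabla v\|_{L_{q_2}}$. To control $\|v\|_{L_\infty}$ I would invoke the Sobolev embedding $W^2_{q_1}\hookrightarrow L_\infty$, which holds because $q_1>2d/3>d/2$ by \eqref{c11}, obtaining $\|v\|_{L_\infty}\leq C(\|v\|_{L_{q_1}}+\|\nabla^2 v\|_{L_{q_1}})$; simultaneously \eqref{imbed:4} together with the embedding $B^{2-2/p}_{q_2,1}\hookrightarrow L_\infty$ (valid for $q_2>d$ and $2-2/p>d/q_2$, both secured by \eqref{require.1}) produces $\|\nabla v\|_{L_{p,1}(L_{q_2})},\,\|v\|_{L_\infty(L_\infty)}\leq CX$. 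Splitting the time integration via the Lorentz H\"older inequality \eqref{holder} in one of the complementary forms $\|\phi\psi\|_{L_{p,1}}\leq \|\phi\|_{L_{(p,1)}}\|\psi\|_{L_\infty}$ or $\|\phi\|_{L_\infty}\|\psi\|_{L_{p,1}}$, organising the $\|v\|_{L_{q_1}}$ factor into $L_{(p,1)}(\R_+)$, multiplying and dividing the $\|\nabla^2 v\|_{L_{q_1}}$ factor by $t$ to generate $Y$, and absorbing $\|\nabla v\|_{L_{q_2}}$ into $X$ via \eqref{imbed:4}, finally produces the third quadratic term $\|v\|_{L_{(p,1)}(L_{q_1})}^2$ after Young's inequality.

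\textbf{Main obstacle.} The hard step is the second summand. Because $L_{q_1}\not\hookrightarrow L_{q_2}$ on the unbounded domain $\Omega_0$, no naive H\"older/Sobolev bound of $v\nabla v$ in $L_{q_2}$ in terms of $\|v\|_{L_{q_1}}\|\nabla v\|_{L_s}$ is available; paying the derivative is unavoidable, and the derivative $\nabla^2 v$ in $L_{q_1}$ is controlled by the hypotheses \emph{only with the weight $t$}. The delicate point is arranging the Lorentz--H\"older decomposition in time so that this weight lands exactly on $\nabla^2 v$ --- producing $\|t\nabla^2 v\|_{L_{p,1}(L_{q_1})}$ rather than the uncontrolled $\|\nabla^2 v\|_{L_{p,1}(L_{q_1})}$ --- while $\|v\|_{L_{q_1}}$ ends up measured precisely in $L_{(p,1)}(\R_+)$ and not in any higher integrability class.
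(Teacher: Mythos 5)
Your treatment of the first summand $\de_t v\int_0^t\nabla v\,d\tau$ coincides with the paper's argument \eqref{N4:2}: H\"older in space, the uniform bound \eqref{e4b} on $\int_0^t\|\nabla v\|_{L_\infty}\,d\tau$, and Young's inequality. The genuine gap is in the second summand $v\nabla v$. After $\|v\nabla v\|_{L_{q_2}}\le\|v\|_{L_\infty}\|\nabla v\|_{L_{q_2}}$ you invoke $W^2_{q_1}\hookrightarrow L_\infty$, which produces the product $\|\nabla^2 v\|_{L_{q_1}}\|\nabla v\|_{L_{q_2}}$, and you propose to handle it by writing it as $t\|\nabla^2 v\|_{L_{q_1}}\cdot t^{-1}\|\nabla v\|_{L_{q_2}}$ so as to generate $\|t\nabla^2 v\|_{L_{p,1}(\R_+;L_{q_1})}$. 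This step fails: the complementary factor $t^{-1}\|\nabla v(t)\|_{L_{q_2}}$ is not in $L_\infty(\R_+)$ (it blows up as $t\to 0^+$, since $\nabla v(0)\neq 0$ in general and no vanishing of the data is assumed), while the unweighted norm $\|\nabla^2 v\|_{L_{p,1}(\R_+;L_{q_1})}$, which is what you would otherwise need near $t=0$, does not appear on the right-hand side of \eqref{est:N4} and is not dominated by the weighted one for small times. The root of the difficulty is the choice of embedding: $W^2_{q_1}\hookrightarrow L_\infty$ forces a second derivative at integrability $q_1$, which in this lemma is controlled only with the weight $t$.

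The paper avoids ever producing a $\nabla^2 v$ factor at level $q_1$ in this summand: it interpolates $v\nabla v$ in space between $L_{q_0}$ (estimated by $\|v\|_{L_{q_1}}\|\nabla v\|_{L_{q_2}}$) and $L_{q_2+\epsilon}$ (estimated by $\|v\|_{L_\infty}\|\nabla v\|_{L_{q_2+\epsilon}}$), and uses the trace-type bound $\|\nabla v\|_{L_\infty(\R_+;W^{1-2/p}_{q_2})}\lesssim\|v\|_{W^{2,1}_{q_2,(p,1)}}$ so that every $\nabla v$ factor is taken in $L_\infty$ in time, while the remaining factor $\|v\|_{\dot W^2_{q_2}\cap L_{q_1}}$ is placed in $L_{p,1}$ in time; see \eqref{N4:3}--\eqref{N4:6}. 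In particular the weighted norm $\|t\nabla^2 v\|_{L_{p,1}(\R_+;L_{q_1})}$ enters \eqref{est:N4} only through the first summand. If you want to stay close to your plan, replace $W^2_{q_1}\hookrightarrow L_\infty$ by the Gagliardo--Nirenberg bound $\|v\|_{L_\infty}\lesssim\|v\|_{L_{q_1}}^{1-\beta}\|\nabla^2 v\|_{L_{q_2}}^{\beta}$ (admissible since $q_2>d/2$), which keeps the derivative at level $q_2$; a Lorentz--H\"older split in time with exponents $p/\beta$ and $p/(1-\beta)$ then yields $\|v\|_{W^{2,1}_{q_2,(p,1)}}^{1+\beta}\|v\|_{L_{(p,1)}(\R_+;L_{q_1})}^{1-\beta}$, and Young's inequality gives the claimed bound without ever touching $\nabla^2 v$ in $L_{q_1}$.
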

\begin{proof}
Taking into account \eqref{struct:G}, we have to estimate 
\begin{equation} \label{N4:1}
\de_t N_4 \sim v_t \int_0^t\nabla v + v\nabla v.
\end{equation}
Since $v_t$ has the same regularity as $\nabla^2 v$, the first term on the LHS of \eqref{N4:1} can be estimated similarly to $N_2$
using \eqref{e4b}:
\begin{equation} \label{N4:2}
\begin{aligned}
&\|v_t \int_0^t \nabla v(\tau) d\tau\|_{L_{p,1}(\R_+,L_{q_2})} 
\leq \left\| \|v_t\|_{q_2} \int_0^t\|\nabla v(\tau) d\tau\|_{\infty} \right\|_{L_{p,1}(\R_+)}\\ 
&\leq \|v_t\|_{L_{p,1}(\R_+,L_{q_2})} \|\nabla^2 v\|_{L_{p,1}(\R_+,L_{q_2})}^{1/p} \|t\nabla^2 v\|_{L_{p,1}(\R_+,L_{q_1})}^{1/p'}\\
& \leq C \left( \|v_t\|_{L_{p,1}(\R_+,L_{q_2})}^2 + \|\nabla^2 v\|_{L_{p,1}(\R_+,L_{q_2})}^2 + \|t\nabla^2 v\|_{L_{p,1}(\R_+,L_{q_1})}^2\right).
\end{aligned}
\end{equation}
In order to estimate the second term on the RHS of \eqref{N4:1} 
we first observe that 
\begin{equation} \label{N4:3}
\|v \nabla v\|_{L_{q_0}} \leq \|v\|_{q_1}\|\nabla v\|_{L_{q_2}}. 
\end{equation}
On the other hand, by the trace theorem we have 
\begin{equation} \label{N4:3b}
\|\nabla v\|_{L_\infty(\R_+,W^{1-2/p}_{q_2})} \leq \|v\|_{L_\infty(\R_+,W^{2-2/p}_{q_2})} \leq C \|v\|_{W^{2,1}_{q_2,(p,1)}},
\end{equation}
while by the imbedding theorem  
$$
\|\nabla v\|_{L_\infty(\R_+,L_{q_2+\epsilon})} \leq C \|\nabla v\|_{L_\infty(\R_+,W^{1-2/p}_{q_2})} 
$$
for some $\epsilon>0$. This allows us to apply the estimate 
\begin{equation} \label{N4:4}
\|v \nabla v\|_{L_{q_2+\epsilon}} \leq \|v\|_{L_\infty}\|\nabla v\|_{L_{q_2+\epsilon}}. 
\end{equation}
Combining \eqref{N4:3} and \eqref{N4:4} we obtain for certain $\kappa>0$ 
\begin{equation}  \label{N4:5}
\begin{aligned}
\|v\nabla v\|_{L_{q_2}} & \leq \|v\nabla v\|_{L_r}^\kappa \, \|v\nabla v\|_{L_{q_2+\epsilon}}^{1-\kappa}
\leq \|v\|_{L_{q_1}}^\kappa\|v\|_{L_\infty}^{1-\kappa} \|\nabla v\|_{L_{q_2}}^\kappa\|\nabla v\|_{L_{q_2+\epsilon}}^{1-\kappa}\\[3pt]
&\leq C \|v\|_{\dot W^2_{q_2} \cap L_{q_1}} \|\nabla v\|_{W^{1-2/p}_{q_2}},
\end{aligned}
\end{equation}
which, by \eqref{N4:3b} implies 
\begin{equation} \label{N4:6}
\begin{aligned}
\big\| \|v\nabla v\|_{q_2}\big\|_{L_{(p,1)}(\R_+)} 
& \leq C \big\| \|v\|_{\dot W^2_{q_2} \cap L_{q_1}} \|\nabla v\|_{W^{1-2/p}_{q_2}} \big\|_{L_{(p,1)}(\R_+)}\\
& \leq C \|v\|_{\dot W^{2,1}_{q_2,(p,1)} \cap L_{(p,1)}(\R_+,L_{q_1}) }
\|\nabla v\|_{L_\infty(\R_+,W^{1-2/p}_{q_2}(\HS) ) }\\
& \leq C \|v\|_{\dot W^{2,1}_{q_2,(p,1)} \cap L_{(p,1)}(\R_+,L_{q_1}) }
\|v\|_{W^{2,1}_{q_2,(p,1)} }.
\end{aligned}
\end{equation}
Combining \eqref{N4:2} and \eqref{N4:6} we conclude \eqref{est:N4}.

\end{proof}

\subsection{Time weighed nonlinear estimates} 
\begin{lem} \label{l:nonlin2}
Assume $p,q_1,q_2$ satisfy \eqref{c10}-\eqref{c11}. Then
\begin{equation} \label{est:12}
\begin{aligned}
&\|(1+t)\big(\bF(v), \,\nabla \div \bG(v),\,\nabla\bH(\bW) \big)\|_{L_{p,1}(\R_+;L_{q_1})}\\
&\leq C \left( \|\nabla^2 v\|_{L_{p,1}(\R_+;L_{q_2})}^2 + \|(1+t)\nabla^2 v\|^2_{L_{p,1}(\R_+;L_{q_1}} \right),
\end{aligned}
\end{equation}
\begin{equation}\label{est:13}
\begin{aligned}
&\|(1+t)\big(\bF(v), \,\nabla \div \bG(v),\,\nabla\bH(\bW) \big)\|_{L_{p,1}(\R_+;L_{q_0})}\\ 
&\leq C \left( \|(1+t)v\|_{\dot W^{2,1}_{q_1,(p,1)}}^2 + \|v\|_{\Xi}^2 \right). 
\end{aligned}
\end{equation}
%&\|t(\nabla_u-\nabla)q\|_{L_{p,1}(\R_+;L_{q_1}(\R^d_+))} \leq
%\|t\nabla q\|_{L_{p,1}(\R_+,L_{q_1}(\HS))}\|\nabla^2 v\|_{L_{p,1}(\R_+,L_q(\HS))}^{1/p} \|t\nabla^2 v\|_{L_{p,1}(\R_+,L_{q_1}(\HS))}^{1/p'}.
%\label{est:13}
\end{lem}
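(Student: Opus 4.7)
Following the template of Lemma \ref{l:nonlin1}, I invoke the structural identity \eqref{struct:F} to reduce the estimates on $\bF(v)$, $\nabla \Gdiv(v)$ and $\nabla\bH(v)$ to estimates on the two model nonlinear terms $N_1$ and $N_2$ from \eqref{nonlin}. Hence the whole lemma amounts to showing, for $i=1,2$, that $\|(1+t)N_i\|_{L_{p,1}(\R_+;L_{q_0})}$ is controlled by the RHS of \eqref{est:13}, and $\|(1+t)N_i\|_{L_{p,1}(\R_+;L_{q_1})}$ by the RHS of \eqref{est:12}.

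\textbf{Step 1 (the $L_{p,1}(L_{q_0})$-bound).} I exploit the H\"older identity $1/q_0 = 1/q_1 + 1/q_2$ together with the Sobolev embedding $\|\nabla v\|_{L_{q_2}}\lesssim \|\nabla^2 v\|_{L_{q_1}}$, which follows from $d(1/q_1-1/q_2)=1$. This delivers the pointwise-in-time bound
$$\|(1+t)N_i\|_{L_{q_0}} \lesssim \|(1+t)(\partial_t,\nabla^2)v\|_{L_{q_1}}\cdot \int_0^t\|\nabla^2 v\|_{L_{q_1}}\,d\tau \qquad (i=1,2),$$
where for $N_2$ the outer factor $\nabla v$ is first upgraded to $\nabla^2 v$ via Sobolev. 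The inner time integral is bounded uniformly in $t$ by H\"older with weight $(1+\tau)^{-1}$, since $\|(1+\tau)^{-1}\|_{L_{p'}(\R_+)}<\infty$ for $p>1$:
$$\int_0^\infty \|\nabla^2 v\|_{L_{q_1}}\,d\tau \leq C\,\|(1+\tau)\nabla^2 v\|_{L_p(\R_+;L_{q_1})}\lesssim \|v\|_\Xi.$$
An application of the Lorentz-H\"older inequality \eqref{0:1} in time followed by Young's inequality then yields \eqref{est:13}.

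\textbf{Step 2 (the $L_{p,1}(L_{q_1})$-bound).} The natural H\"older pairing is now $L_{q_1}\cdot L_\infty$. For $N_1$,
$$\|(1+t)N_1\|_{L_{q_1}} \leq \|(1+t)(\partial_t,\nabla^2)v\|_{L_{q_1}}\cdot \int_0^t\|\nabla v\|_{L_\infty}\,d\tau,$$
and the uniform-in-$t$ bound on the inner integral, obtained exactly as in \eqref{e5}--\eqref{e4b} of Lemma \ref{l:nonlin1} through the interpolation inequality \eqref{sup:1}, the generalized Lorentz-H\"older \eqref{holder2}, and the rescaling rule \eqref{0:2}, reads
$$\int_0^\infty \|\nabla v\|_{L_\infty}\,d\tau \lesssim \|t\nabla^2 v\|_{L_{p,1}(L_{q_1})}^{1/p'}\,\|\nabla^2 v\|_{L_{p,1}(L_{q_2})}^{1/p}.$$
Combined with \eqref{0:1} and Young's inequality, and using $\|t\nabla^2 v\|_{L_{p,1}(L_{q_1})}\leq \|(1+t)\nabla^2 v\|_{L_{p,1}(L_{q_1})}$, this produces the quadratic bound of \eqref{est:12}. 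The term $N_2$ is handled in the same way, with the outer $\nabla v$ promoted from $L_{q_1}$ to $L_\infty$ via \eqref{sup:1} and the inner time integral of $\|\nabla^2 v\|_{L_{q_1}}$ controlled as in Step 1.

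\textbf{Main obstacle.} The technical heart of the proof is routing the time weight $(1+t)$ so that in the final estimate it decorates only the $L_{q_1}$-norm, matching the asymmetric structure of \eqref{est:12}, in which the $L_{q_2}$-norm appears \emph{without} the weight. This is made possible by the observation that the time integrals $\int_0^t\|\nabla v\|_{L_\infty}\,d\tau$ and $\int_0^t\|\nabla^2 v\|_{L_{q_1}}\,d\tau$ both admit uniform-in-$t$ bounds in the target norms. Consequently the full $(1+t)$ weight is absorbed by the outer higher-order derivative factor already sitting in $L_{q_1}$, and never leaks onto the $L_{q_2}$-side---this is what distinguishes the Lorentz-weighted argument here from the generic Gagliardo--Nirenberg distribution and aligns it cleanly with the RHS of \eqref{est:12}.
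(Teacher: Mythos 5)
Your Step 1 (the $L_{q_0}$-estimate \eqref{est:13}) and your treatment of the $N_1$-term in Step 2 coincide with the paper's argument (\eqref{est:w1b}, \eqref{est:w2b} and \eqref{est:w2}): weight on the outer $L_{q_1}$-factor, uniform-in-time bound on the inner integral via \eqref{L1:1} or \eqref{e4b}, Lorentz--H\"older in time, Young. That part is fine.

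The gap is in your one-line dismissal of $N_2$ in the weighted $L_{q_1}$-estimate. With your pairing $\|(1+t)N_2\|_{L_{q_1}}\le (1+t)\|\nabla v\|_{L_\infty}\,\bigl\|\int_0^t\nabla^2 v\,d\tau\bigr\|_{L_{q_1}}$, the inner factor is indeed uniformly bounded, but the weight now sits on the $L_\infty$-factor, and you must estimate $\|(1+t)\nabla v\|_{L_{p,1}(\R_+;L_\infty)}$. Since $q_1<d$, the only available tool is \eqref{sup:1}, $\|\nabla v\|_{L_\infty}\lesssim\|\nabla^2 v\|_{L_{q_1}}^{1/p'}\|\nabla^2 v\|_{L_{q_2}}^{1/p}$ (the exponent is forced by \eqref{c:alpha} and \eqref{c10}--\eqref{c11}), so splitting the weight pointwise gives $(1+t)\|\nabla v\|_{L_\infty}\lesssim\bigl[(1+t)\|\nabla^2 v\|_{L_{q_1}}\bigr]^{1/p'}\bigl[(1+t)\|\nabla^2 v\|_{L_{q_2}}\bigr]^{1/p}$, and after Lorentz--H\"older you are left with $\|(1+t)\nabla^2 v\|_{L_{p,1}(\R_+;L_{q_2})}^{1/p}$ — a time-weighted $q_2$-norm that is \emph{not} on the RHS of \eqref{est:12} and is not controlled in the solution class of Theorem \ref{thm:Lorentz} (only the $q_1$-norm carries the weight). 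Absorbing the full weight on the $q_1$-factor would instead require $\|(1+t)^{p'}\nabla^2 v\|_{L_{q_1}}$, which is also unavailable. So your "Main obstacle" claim that the weight never leaks onto the $L_{q_2}$-side is exactly what fails for $N_2$ in this pairing; it is true for $N_1$ only because there the weight rides on the outer $L_{q_1}$-factor. The paper closes this term differently (see \eqref{e4}--\eqref{est:w1}): it uses H\"older with $\frac{1}{q_1}=\frac{1}{q_1^*}+\frac1d$, so that the full weight goes on $\|(1+t)\nabla v\|_{L_{q_1^*}}\le\|(1+t)\nabla^2 v\|_{L_{q_1}}$, while the inner integral is taken in $L_d$ and bounded uniformly via the interpolation $\|\nabla^2 v\|_{L_d}\le(1+t)^{-1/p'}\|(1+t)\nabla^2 v\|_{L_{q_1}}^{1/p'}\|\nabla^2 v\|_{L_{q_2}}^{1/p}$, i.e.\ precisely hypothesis \eqref{c10} and the argument of \eqref{e5}; there the unfavourable power of $(1+t)$ appears inside the time integral, where it is harmless. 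You need this change of pairing (or an equivalent device) to make Step 2 complete.
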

{\bf Proof.}
We start with the bounds on \eqref{lhs:2}.
Similarly as before, the most restrictive term is 
$t N_2$.
%If 
%$$
%t\nabla v \in L_{p,1}(\R_+;L^{m_1}),  \quad 
%\int_0^t |\nabla^2 v| d \tau \in L_\infty(\R_+;L_{m_0}),
%$$
%and $m_1$ satisfies \eqref{m1q1}, then
By the Sobolev imbedding we have 
\begin{equation} \label{e4}
\begin{aligned}
\bign\|(1+t)\nabla v \int_0^t|\nabla^2 v|d\tau \|_{q_1}\bign_{L_{p,1}(\R_+)}\leq \bign\|(1+t)\nabla v\|_{q_1^*} \|\int_0^t|\nabla^2 v|d\tau \|_{d}\bign_{L_{p,1}(\R_+)}\leq\\[3pt]
\bign\|(1+t)\nabla^2 v\|_{q_1}\bign_{L_{p,1}(\R_+)}\|\int_0^t\nabla^2 v\|_{L_\infty(\R_+;L_d)}.
\end{aligned}
\end{equation}
%Conditions \eqref{m1q1} imply $m_0=d$. 
%Now the important observation is that \eqref{c0} is equivalent to
%\begin{equation} \label{c1}
%\frac{1}{d}=\frac{1-\alpha}{q}+\frac{\alpha}{q_1}.
%\end{equation}
The condition \eqref{c10} and the interpolation inequality for $L_p$ norms imply
\begin{equation*} 
\int_0^t\|\nabla^2 v(\tau)\|_{L_d} d \tau \leq 
\int_0^t (1+t)^{-1/p'}\|t\nabla^2 v(\tau)\|_{L_{q_1}}^{1/p'}\|\nabla^2 v\|_{L_{q_2}}^{1/p},
\end{equation*} 
so repeating \eqref{e5} we obtain 
\begin{equation}
\int_0^t\|\nabla^2 v(\tau)\|_{L_d} d\tau \lesssim \|(1+t)\nabla^2 v\|_{L_{p,1}(\R_+;L_{q_1})}^{1/p'}\|\nabla^2 v\|_{L_{p,1}(\R_+;L_{q_2})}^{1/p}.
\end{equation}
Combining \eqref{e4} and \eqref{e5} we obtain 
\begin{equation} \label{est:w1}
\begin{aligned}
&\bign\|(1+t)\nabla v \int_0^t|\nabla^2 v|d\tau \|_{L_{q_1}}\bign_{L_{p,1}(\R_+)}\lesssim \|(1+t)\nabla^2 v\|^{1+1/p'}_{L_{p,1}(\R_+;L_{q_1})} \|\nabla^2 v\|_{L_{p,1}(\R_+;L_{q_2})}^{1-1/p'}\\
&\lesssim \|(1+t)\nabla^2 v\|^2_{L_{p,1}(\R_+;L_{q_1})}+\|\nabla^2 v\|_{L_{p,1}(\R_+;L_{q_2})}^2.
\end{aligned}
\end{equation}
for $p,q_1,q_2$ satisfying \eqref{c10}-\eqref{c11}.
%provided 
%begin{equation}
%\frac{1}{d}=\frac{1}{pq}+\frac{1}{p'q_1}.
%\end{equation}
%Conditions \eqref{c:alpha} and \eqref{c3} imply $\theta_1=\frac{p(q_2-d)}{(p-1)q_2}$, which together with $\theta_2<1$ implies 
%\begin{equation}
%q_2<pd.
%\end{equation}
The term $(1+t)N_1$ can be estimated using \eqref{e4b}: 
\begin{equation} \label{est:w2}
\begin{aligned}
&\left\| \|(1+t) \nabla^2 v \int_0^t \nabla v(\tau) d\tau\|_{L_{q_1}} \right\|_{L_{p,1}(\R_+)} \leq \left\| \|(1+t) \nabla^2 v\|_{L_{q_1}} \int_0^t\|\nabla v(\tau) d\tau\|_{L_\infty} \right\|_{L_{p,1}(\R_+)}\\[5pt] 
&\leq \|\nabla^2 v\|_{L_{p,1}(\R_+,L_{q_2})}^{1/p} \|(1+t)\nabla^2 v\|_{L_{p,1}(\R_+,L_{q_1})}^{1+1/p'}
\lesssim \|\nabla^2 v\|_{L_{p,1}(\R_+,L_{q_2})}^2 + \|(1+t)\nabla^2 v\|_{L_{p,1}(\R_+,L_{q_1})}^2. 
\end{aligned}
\end{equation}
Estimates \eqref{est:w1} and \eqref{est:w2} imply \eqref{est:12}. 
Let us proceed with the estimates for $q_0$. For $tN_2$ we have 
\begin{equation} \label{est:w1b}
\begin{aligned}
&\left\| \| (1+t)\nabla v\int_0^t |\nabla^2 v|d\tau \|_{q_0}  \right\|_{L_{(p,1)}(\R_+)}
\leq \left\| \| (1+t)\nabla v\|_{q_2}  \|\int_0^t |\nabla^2 v|d\tau \|_{q_1}  \right\|_{L_{p,1}(\R_+)}\\[3pt]
&\leq \left\| \| (1+t)\nabla v\|_{q_2} \right\|_{L_{(p,1)}(\R_+)} 
\|\int_0^t |\nabla^2 v|d\tau\|_{L_\infty(L_{q_1})}\\[3pt]
&\lesssim  \|(1+t)\nabla^2 v\|_{L_{p,1}(\R_+;L_{q_1})}\|(1+t)\nabla^2 v\|_{L_p(\R_+;L_{q_1})}\\[3pt]
&\lesssim \|(1+t)\nabla^2 v\|_{L_{p,1}(\R_+;L_{q_1})}^2 + \|(1+t)\nabla^2 v\|_{L_p(\R_+;L_{q_1})}^2
\leq \|(1+t)\nabla^2 v\|_{L_{p,1}(\R_+;L_{q_1})}^2 + \|v\|_{\Xi}^2,
\end{aligned}
\end{equation}
where in the last passage we have used \eqref{L1:1}. Similarly for $tN_1$: 
\begin{equation} \label{est:w2b}
\begin{aligned}
&\left\| \| (1+t)\nabla^2 v\int_0^t |\nabla v|d\tau \|_{q_0}  \right\|_{L_{(p,1)}(\R_+)}
\leq \left\| \| (1+t)\nabla^2 v\|_{q_1}  \|\int_0^t |\nabla v|d\tau \|_{q_2}  \right\|_{L_{p,1}(\R_+)}\\[3pt]
&\leq \left\| \| (1+t)\nabla^2 v\|_{q_1} \right\|_{L_{(p,1)}(\R_+)} 
\|\int_0^t |\nabla^2 v|d\tau\|_{L_\infty(L_{q_1})}\\[3pt]
&\lesssim \|(1+t)\nabla^2 v\|_{L_{p,1}(\R_+;L_{q_1})}\|(1+t)\nabla^2 v\|_{L_p(\R_+;L_{q_1})}\\[3pt]
&\lesssim \|(1+t)\nabla^2 v\|_{L_{p,1}(\R_+;L_{q_1})}^2 + \|(1+t)\nabla^2 v\|_{L_p(\R_+;L_{q_1})}^2
\leq \|(1+t)\nabla^2 v\|_{L_{p,1}(\R_+;L_{q_1})}^2 + \|v\|_{\Xi}^2.
\end{aligned}
\end{equation}
Combining \eqref{est:w1b} and \eqref{est:w2b} we obtain \eqref{est:13}.
\begin{flushright}
$\square$
\end{flushright}

It remains to prove 
\begin{lem} \label{l:nonlin3} 
Assume $p,q_1,q_2$ satisfy \eqref{c10}-\eqref{c11}. Then
\begin{align}
&\|(1+t) \de_t \bG(v)\|_{L_{p,1}(\R_+;L_{q_1})} \leq 
C \left( \|v\|_{W^{2,1}_{q_1,(p,1)}}^2 + \|(1+t)v\|_{\dot W^{2,1}_{q_1,(p,1)}}^2 + \|v\|_{\Xi}^2 \right),\label{est:nonlin3}\\[5pt]
& \|(1+t)\, \de_t \bG(v)\|_{L_{p,1}(\R_+;L_{q_0})} \leq C \left( \|(1+t)v\|_{\dot W^{2,1}_{q_1,(p,1)}}^2 + \|v\|_{\Xi}^2 \right). \label{est:nonlin4}
\end{align}
\end{lem}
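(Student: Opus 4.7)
The plan is to use the structural relation $\bG(v)\sim N_4=v\int_0^t\nabla v\,d\tau$ from \eqref{struct:G}, which gives $\pd_t\bG(v)\sim v_t\int_0^t\nabla v\,d\tau+v\,\nabla v$, and to estimate each of these two pieces separately in the two weighted Lorentz spaces $L_{p,1}(\R_+;L_{q_1})$ and $L_{p,1}(\R_+;L_{q_0})$. The overall strategy mirrors the unweighted case carried out in \eqref{N4:1}--\eqref{N4:6}, only now the weight $(1+t)$ has to be tracked carefully and distributed between the factors of each product.

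For the first term $(1+t)\,v_t\int_0^t\nabla v\,d\tau$, I would apply H\"older in space: in $L_{q_0}$ via $\tfrac{1}{q_0}=\tfrac{1}{q_1}+\tfrac{1}{q_2}$ as $\|(1+t)v_t\|_{L_{q_1}}\,\|\int_0^t\nabla v\,d\tau\|_{L_{q_2}}$, and in $L_{q_1}$ via the pairing $L_{q_1}\cdot L_\infty$. In both cases the convolution-type second factor is a scalar function of $t$ already bounded uniformly on $\R_+$ by $\|v\|_\Xi$: the $L_{q_2}$ version comes from the weighted H\"older argument \eqref{L1:1}--\eqref{L1:2}, and the $L_\infty$ version from \eqref{sup:L1}. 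The remaining factor $\|(1+t)v_t\|_{L_{p,1}(L_{q_1})}$ is then extracted from $\|(1+t)v\|_{\dot W^{2,1}_{q_1,(p,1)}}$ by writing $\pd_t[(1+t)v]=v+(1+t)v_t$ and absorbing the surplus $\|v\|_{L_{p,1}(L_{q_1})}$ into $\|v\|_{W^{2,1}_{q_1,(p,1)}}$ through the embedding $W^{2,1}_{q_1,(p,1)}\hookrightarrow C_b(B^{2-2/p}_{q_1,1})\hookrightarrow C_b(L_{q_1})$.

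For the bilinear term $(1+t)\,v\,\nabla v$, the $L_{q_0}$ estimate is immediate from $\|v\|_{L_\infty(L_{q_1})}\,\|(1+t)\nabla v\|_{L_{p,1}(L_{q_2})}$ combined with the Sobolev embedding $\|(1+t)\nabla v\|_{L_{q_2}}\le C\,\|(1+t)\nabla^2 v\|_{L_{q_1}}$, which is available precisely because $q_2=q_1^*$ in view of \eqref{exp:2}. The $L_{q_1}$ estimate is the delicate one: following the scheme of \eqref{N4:3}--\eqref{N4:6} I would interpolate $\|v\,\nabla v\|_{L_{q_1}}$ between the low-integrability pairing $\|v\,\nabla v\|_{L_{q_0}}\le\|v\|_{L_{q_1}}\,\|\nabla v\|_{L_{q_2}}$ and a higher-integrability pairing $\|v\,\nabla v\|_{L_{q_2+\varepsilon}}\le\|v\|_{L_\infty}\,\|\nabla v\|_{L_{q_2+\varepsilon}}$ with the second factor accessed through the trace embedding \eqref{N4:3b}, then redistribute the weight $(1+t)$ through the Lorentz H\"older inequality \eqref{holder2} and the weighted complex interpolation identity \eqref{complex:int}.

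The main obstacle will be closing the $L_{p,1}(L_{q_1})$ bound on $(1+t)\,v\,\nabla v$: the factor $(1+t)\nabla v$ naturally lives in $L_{q_2}$ rather than $L_{q_1}$ through the Sobolev conjugacy $q_2=q_1^*$, so one is forced to shift the weight $(1+t)$ between the two factors of the product and to invoke the Lorentz-interpolation mechanism $L_{p,1}=(L_{p_1},L_{p_2})_{1/2,1}$ with $1/p=1/(2p_1)+1/(2p_2)$, the same device underlying Lemma~\ref{l:Stokes:Lorentz}. The exponent relations \eqref{exp:1}--\eqref{exp:2} are precisely what make every intermediate bound close into the three norms on the right-hand sides of \eqref{est:nonlin3} and \eqref{est:nonlin4}, without producing any auxiliary quantity that cannot be absorbed.
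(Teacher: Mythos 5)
Your proposal follows essentially the same route as the paper's proof: the same splitting $\pd_t\bG(v)\sim v_t\int_0^t\nabla v\,d\tau+v\nabla v$, H\"older with $1/q_0=1/q_1+1/q_2$ and Sobolev $q_2=q_1^*$ for the $L_{q_0}$ bounds, and, for the delicate $L_{q_1}$ bound of $(1+t)v\nabla v$, interpolation of the product between a low and a high Lebesgue exponent with the high endpoint controlled through the trace embedding \eqref{N4:3b}, exactly as in \eqref{tN4:2}. The only genuine differences are cosmetic: you bound $\int_0^t\|\nabla v\|_{L_\infty}$ by $\|v\|_{\Xi}$ via \eqref{sup:L1} where the paper uses the Lorentz product \eqref{e4b}, and you take $q_2+\epsilon$ rather than $q_1+\epsilon$ as the upper interpolation endpoint; both choices work and neither changes the structure of the argument.

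One step as you wrote it would fail: the absorption of the surplus $\|v\|_{L_{p,1}(\R_+;L_{q_1})}$ (coming from expanding $\pd_t[(1+t)v]=v+(1+t)v_t$) through $W^{2,1}_{q_1,(p,1)}\hookrightarrow C_b(L_{q_1})$. On the infinite time interval an $L_\infty$-in-time bound gives no $L_{p,1}(\R_+)$ integrability, so this embedding does not control that surplus. The step is, however, unnecessary: in the paper's convention (see \eqref{def:ETW:2} and the way \eqref{est:w3} is written) the weighted norm $\|(1+t)v\|_{\dot W^{2,1}_{q_1,(p,1)}}$ stands for $\|(1+t)(\pd_t v,\nabla^2 v)\|_{L_{p,1}(\R_+;L_{q_1})}$, i.e.\ the weight sits outside the derivatives, so $\|(1+t)v_t\|_{L_{p,1}(\R_+;L_{q_1})}$ is directly one of the admissible right-hand-side quantities in \eqref{est:nonlin3} and no surplus term ever appears. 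With that correction your argument coincides with the paper's.
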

\begin{proof}
We have to estimate 
\begin{equation} \label{tN4:1}
(1+t)\,\de_t N_4 \sim (1+t)\, v_t \int_0^t\nabla v + (1+t)\,v\nabla v.
\end{equation}
The first term can be treated similarly to \eqref{est:w2} applying \eqref{e4b}:
\begin{equation} \label{est:w3}
\begin{aligned}
&\left\| \|(1+t) v_t \int_0^t \nabla v(\tau) d\tau\|_{L_{q_1}} \right\|_{L_{p,1}(\R_+)} \leq \left\| \|(1+t)\,v_t\|_{L_{q_1}} \int_0^t\|\nabla v(\tau) d\tau\|_{\infty} \right\|_{L_{p,1}(\R_+)}\\[5pt] 
&\leq \|\nabla^2 v\|_{L_{p,1}(\R_+,L_{q_2})}^{1/p} \|(1+t)\nabla^2 v\|_{L_{p,1}(\R_+,L_{q_1})}^{1/p'} \|(1+t)\,v_t\|_{L_{p,1}(\R_+,L_{q_1})}\\
&\leq C \left( \|v\|_{\dot W^{2,1}_{q_2,(p,1)}}^2 + \|(1+t)v\|_{\dot W^{2,1}_{q_1,(p,1)}}^2 \right). 
\end{aligned}
\end{equation}
For the second term we have, similarly to \eqref{N4:5}, 
\begin{equation}  \label{tN4:2}
\begin{aligned}
\|(1+t)v\nabla v\|_{q_1} & \leq \|(1+t)v\nabla v\|_{q_0}^\kappa \, \|(1+t)v\nabla v\|_{q_1+\epsilon}^{1-\kappa}\\[3pt]
&\leq \|v\|_{q_1}^\kappa\|v\|_{\infty}^{1-\kappa} \|(1+t)\nabla v\|_{q_2}^\kappa\|(1+t)\nabla v\|_{q_1+\epsilon}^{1-\kappa}\\[3pt]
&\leq C \|v\|_{W^2_{q_1}} \|(1+t)\nabla v\|_{W^{1-2/p}_{q_1} \cap W^{1-2/p}_{q_2} },
\end{aligned}
\end{equation}
which implies 
\begin{equation} \label{est:w4}
\begin{aligned}
\big\| \|(1+t)v\nabla v\|_{q_1} \big\|_{L_{p,1}(\R_+)} & \leq C \big\| \|v\|_{W^2_{q_1}} \|(1+t)\nabla v\|_{W^{1-2/p}_{q_1} \cap W^{1-2/p}_{q_2} }\big\|_{L_{p,1}(\R_+)}\\
& \leq C \|v\|_{W^{2,1}_{q_1,(p,1)}}\,\|(1+t)\nabla v\|_{L_\infty(\R_+,W^{1-2/p}_{q_1} \cap W^{1-2/p}_{q_2}  ) }\\
& \leq C \|v\|_{W^{2,1}_{q_1,(p,1)}}\, \left( \|v\|_{ W^{2,1}_{p,q_1}\cap W^{2,1}_{p,q_2}} +\|(1+t)v\|_{\dot W^{2,1}_{p,q_1}\cap \dot W^{2,1}_{p,q_2}} \right)\\
& \leq C \left( \|v\|_{W^{2,1}_{q_1,(p,1)}}^2 + \|v\|_{\Xi}^2 \right).
\end{aligned}
\end{equation}
The RHS of \eqref{est:w3} is estimated by the RHS of \eqref{est:w4}, therefore we get \eqref{est:nonlin3}. Next, similarly to \eqref{est:w2b} we have 
\begin{equation} \label{est:w2b:2}
\begin{aligned}
&\left\| \| (1+t)\, v_t\int_0^t |\nabla v|d\tau \|_{q_0}  \right\|_{L_{(p,1)}(\R_+)}
\leq C \|(1+t)\, v_t\|_{L_{(p,1)}(\R_+;L_{q_1})}\|(1+t)\nabla^2 v\|_{L_p(\R_+;L_{q_1})}\\
& \lesssim \left( \|(1+t)v\|_{\dot W^{2,1}_{q_1,(p,1)}}^2  + \|(1+t)\nabla^2 v\|_{L_p(\R_+;L_{q_1})}^2\right)
\lesssim \left( \|(1+t)v\|_{\dot W^{2,1}_{q_1,(p,1)}}^2  + \|(1+t)\nabla^2 v\|_{\Xi}^2\right)
\end{aligned}
\end{equation}
Finally,
\begin{equation}
\begin{aligned}
&\left\| \| (1+t)v\nabla v \|_{q_0} \right\|_{L_{p,1}(\R_+)}
\leq \left\| \|v\|_{q_1} \|(1+t)\nabla v\|_{q_2}  \right\|_{L_{p,1}(\R_+)}\\
&\leq \left\| \|v\|_{q_1} \|(1+t)\nabla^2 v\|_{q_1}  \right\|_{L_{p,1}(\R_+)}
\leq \|(1+t)\nabla^2 v\|_{L_{p,1}(\R_+;L_{q_1})} \|v\|_{L_\infty(\R_+;L_{q_1})}\\
&\lesssim \left( \|(1+t)v\|_{\dot W^{2,1}_{q_1,(p,1)}}^2 + \|v\|_{W^{2,1}_{q_1,p}}^2 \right) 
\lesssim \left( \|(1+t)v\|_{\dot W^{2,1}_{q_1,(p,1)}}^2 + \|v\|_{\Xi}^2 \right).
\end{aligned}
\end{equation}

\end{proof}

\subsection{Lorentz spaces estimates of the boundary terms}

In this section we prove the estimates on \eqref{lhs:5} and \eqref{lhs:6}.  
In view of \eqref{def:Hp1} and \eqref{half}, we shall estimate 
\begin{equation*}
    \|(1+t) \nabla v B(\int_0^t \nabla  v dt')\|_{ \dot H^{1/2}_{(p,1)}(\R_+;L_{q_0}) \cap \dot H^{1/2}_{(p,1)}(\R_+;L_{q_1}) } + \|\nabla v B(\int_0^t \nabla  v dt')\|_{ \dot H^{1/2}_{(p,1)}(\R_+;L_{q_2})}.   
\end{equation*}
For this purpose we need to modify of the proofs from Section 
\ref{sec:interp}.
\begin{lem} \label{l:bdry:lorentz}
Assume $S$ be given by \eqref{def:S}, where $B$ satisfies the assumptions of Lemma \ref{l:interp}. Assume moreover 
$$
f \sim t \nabla v \quad \text{and} \quad g \sim \nabla v
$$
for $v \in \Xi$, where the space $\Xi$ is defined in \eqref{sol:space}.
Then 
\begin{equation} \label{bdry:lorentz}
\begin{aligned}
    &\|Sf\|_{\dot H^{1/2}_{(p,1)}(\R_+;L_{q_0})} + \|Sf\|_{\dot H^{1/2}_{(p,1)}(\R_+;L_{q_1})} + \|Sg\|_{\dot H^{1/2}_{(p,1)}(\R_+;L_{q_2})} \\[3pt]  
    &\lesssim  \|v\|_{\Xi} \left(\|t v\|_{\dot W^{2,1}_{q_1,(p,1)}}+\|v\|_{W^{2,1}_{q_1,(p,1)}}+\|v\|_{W^{2,1}_{q_2,(p,1)} }\right)
    + \|v\|_{\Xi}^2.  
\end{aligned}
\end{equation}
\end{lem}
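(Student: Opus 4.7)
The plan is to obtain the $\dot H^{1/2}_{(p,1)}$ bounds of Lemma \ref{l:bdry:lorentz} as a real-interpolation upgrade of the $\dot H^{1/2}_p$ bounds \eqref{close:10}, \eqref{close:11}, \eqref{close:13} already established in Section \ref{sec:interp}. The crucial observation is that, for fixed $g$, the map $f \mapsto Sf = f\,B\!\left(\int_0^t g\,d\tau\right)$ is \emph{linear} in $f$. Hence the key inequalities \eqref{half:1} and \eqref{half:2} are linear bounds in $f$ with a $g$-dependent coefficient $I_0(g)$ or $I_1(g)$, and real interpolation can be applied to the $f$-side while the coefficient is kept fixed.

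First I would fix two auxiliary exponents $p_0<p<p_1$, both still satisfying the standing smallness condition $1/p_k < \theta/2$ from \eqref{require.1}, chosen so that $\tfrac{1}{p} = \tfrac{\sigma}{p_0}+\tfrac{1-\sigma}{p_1}$ for some $\sigma\in(0,1)$. Lemma \ref{l:interp} applied at each $p_k$ gives, for the three target integrability indices $q_0,q_1,q_2$, linear bounds of the form
\begin{equation*}
\|Sf\|_{\dot H^{1/2}_{p_k}(\R_+;L_{q_j})} \le C\,\mathcal J_k(g)\, \bigl(\|f\|_{\dot H^{1/2}_{p_k}(\R_+;L_{r_1})}+\|f\|_{Y_{k,j}}\bigr),
\end{equation*}
where $\mathcal J_k(g)$ is (a version of) $I_0(g)$ or $I_1(g)$ estimated exactly as in Section \ref{sec:interp} — via the Besov embedding chain \eqref{int:14}--\eqref{int:14a} for the $q_0$-case and via \eqref{sup:L1} for the $q_1,q_2$-cases — and $Y_{k,j}$ is the appropriate weighted $L$-space in time appearing in \eqref{half:1}--\eqref{half:2}. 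The important structural point is that $\mathcal J_k(g)$ only uses $L_p$-type norms of $v$, and hence is controlled by $\|v\|_{\Xi}$ uniformly in $k$; no Lorentz upgrade of the coefficient is needed.

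Next, with $g$ (and hence $\mathcal J_k(g)$) held fixed, I would real-interpolate the linear operator $f\mapsto Sf$ between the two endpoint estimates. By the very definition \eqref{def:Hp1} of $\dot H^{1/2}_{(p,1)}$ and standard reiteration,
\begin{equation*}
\bigl(\dot H^{1/2}_{p_0}(\R_+;L_{q_j}),\,\dot H^{1/2}_{p_1}(\R_+;L_{q_j})\bigr)_{\sigma,1} \;=\; \dot H^{1/2}_{(p,1)}(\R_+;L_{q_j}),
\end{equation*}
while the companion source spaces (intersections of $\dot H^{1/2}_{p_k}$ with a weighted time--$L_r$ space) real-interpolate into intersections of $\dot H^{1/2}_{(p,1)}$ with the corresponding weighted $L_{(p,1)}(dt/t^{1/2};L_r)$, using \eqref{complex:int} together with the basic Lorentz interpolation identity $(L_{p_0},L_{p_1})_{\sigma,1}=L_{(p,1)}$. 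Substituting $f=(1+t)\nabla v$ (respectively $g=\nabla v$ for the $q_2$-bound) and invoking the trace-type embedding \eqref{imbed:4} — which transfers $\dot H^{1/2}_{(p,1)}(L_q)$ control of $\nabla v$ to $\dot W^{2,1}_{q,(p,1)}$ control of $v$ — the $f$-factor collapses exactly to $\|tv\|_{\dot W^{2,1}_{q_1,(p,1)}}+\|v\|_{W^{2,1}_{q_1,(p,1)}}+\|v\|_{W^{2,1}_{q_2,(p,1)}}$, as required on the RHS of \eqref{bdry:lorentz}. The product $\mathcal J_k(g)\cdot\|f\|_{\dots}$ yields the claimed quadratic shape after applying Young's inequality $ab\le a^2+b^2$ at the end.

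The main obstacle will be the bookkeeping required to match each of the three target integrabilities to the correct instance of \eqref{half:1} versus \eqref{half:2} and the correct weighted auxiliary space $Y_{k,j}$ — just as in Section \ref{sec:interp} the $q_0$-case uses \eqref{half:1} with $(r_1,r_2)=(q_1,q_2)$ while the $q_1$- and $q_2$-cases use \eqref{half:2} with $r_2=\infty$ and $r_1^+ = 2q_i$. One must verify that at each $p_k$ all the intermediate pointwise-in-time controls (the Gagliardo--Nirenberg inequality \eqref{sup:1}, the embedding $\|v\|_{L_\infty(\R_+;L_{q_i})}\le C\|v\|_\Xi$, and the $L_1(\R_+;L_\infty)$ bound \eqref{sup:L1}) remain valid — they do, since all three conditions depend only on the qualitative range $q_1<d<q_2$ and on $1/p_k<\theta/2$. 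A secondary technicality is that real interpolation of an intersection of two Banach spaces requires the components to be $K$-closed, which is automatic here because both factors in each intersection are retracts of function spaces on $\R_+$ with weights. Once these verifications are in place, the three estimates add up to \eqref{bdry:lorentz}.
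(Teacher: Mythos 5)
There is a genuine gap, and it sits exactly at the point you declare unproblematic: the claim that the coefficients $\mathcal J_k(g)$ ``only use $L_p$-type norms of $v$ and hence are controlled by $\|v\|_{\Xi}$ uniformly in $k$.'' The coefficients in Lemma \ref{l:interp} are tied to the integrability exponent at which the lemma is applied: $I_0(g)$ contains $\|t\,g\|_{L_{p_k}(\,\cdot\,;L_{r_2})}$, i.e.\ with $g\sim\nabla v$ and $(r_1,r_2)=(q_1,q_2)$ you would need $\|t\nabla v\|_{L_{p_k}(\R_+;L_{q_2})}\lesssim\|v\|_{\Xi}$ for $p_k\neq p$. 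But $\Xi$ encodes the weighted bound $t v\in\dot W^{2,1}_{q_1,p}$ only at the single exponent $p$; since the time interval is infinite you cannot pass from $L_p$ to $L_{p_1}$ ($p_1>p$) by H\"older, and the parabolic embedding only gives $tv\in\dot W^{2-2\sigma,1-\sigma}_{q_1,p^+}$, whose spatial part embeds into $L_{q_2^-}$ with $q_2^-$ \emph{strictly below} $q_2$ (the exponent $q_2=q_1^*$ is exactly critical). So Lemma \ref{l:interp} with $(r_1,r_2)=(q_1,q_2)$ simply cannot be invoked at $p^+$ with a $\Xi$-bounded coefficient; similarly at $p^-<p$ the norm $\|t\,g\|_{L_{p^-}}$ is not available and one can only control the weaker weight $t^{1-\sigma}$. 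This is precisely why the paper does not apply Lemma \ref{l:interp} at two exponents: it re-derives the two endpoint $\dot H^{1/2}_{p^\pm}(L_{q_0})$ bounds from scratch, splitting $\|fg\|_{L_{p^+}(L_{q_0})}\le\|tg\|_{L_{p^+}(L_{q_2^-})}\|\tfrac1t f\|_{L_\infty(L_{q_1^+})}$ at $p^+$ (trading spatial integrability, which forces $q_1^+>q_1$ on the companion factor) and $\|fg\|_{L_{p^-}(L_{q_0})}\le\|t^{1-\sigma}g\|_{L_{p^-}(L_{q_2})}\|\tfrac{1}{t^{1-\sigma}}f\|_{L_\infty(L_{q_1})}$ at $p^-$ (trading the weight). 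The two endpoints therefore produce \emph{different} auxiliary spaces, $L_{2p^+}(\R_+,dt/t^{1/2};L_{\tilde q_1^+})$ and $L_{2p^-}(\R_+,dt/t^{(1-\sigma)/2};L_{q_1})$, and the real interpolation step must be followed by the separate verification that $t\nabla v$ lies in their intersection with norm $\lesssim\|v\|_{\Xi}$. Your scheme, in which both endpoints carry the same auxiliary space and a uniformly bounded coefficient, skips all of this and the $q_0$-estimate does not close.

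A secondary, smaller issue: even after interpolating, your claim that the interpolated auxiliary norms of $f=(1+t)\nabla v$ ``collapse exactly'' to $\|tv\|_{\dot W^{2,1}_{q_1,(p,1)}}+\|v\|_{W^{2,1}_{q_1,(p,1)}}+\|v\|_{W^{2,1}_{q_2,(p,1)}}$ via \eqref{imbed:4} is not automatic; in the paper this step is again the $\sigma$-shifted embeddings \eqref{pm:33} and \eqref{pm:3}, which bound those weighted Lebesgue norms by $\|v\|_{\Xi}$ rather than by Lorentz-type norms. For the $q_1$ and $q_2$ components your outline is closer to workable, since there the relevant coefficient uses $\|(1+t)\nabla v\|_{L_\infty(\R_+;L_{q_i})}$ and $\|\nabla v\|_{L_1(\R_+;L_\infty)}$, both genuinely exponent-independent; but the $q_0$ component, which is the one the paper proves in detail, requires the two-sided modification described above.
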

\begin{proof} We prove the estimate for $\|Sf\|_{\dot H^{1/2}_{(p,1)}(\R_+;L_{q_0})}$ in details, the other estimates are derived in a similar way. The proof will follow from the following interpolation rule that for suitable $p^-<p<p^+$ as
\begin{equation}
    T: \dot H^{1/2}_{p^+}(\R_+;L_{r})\cap \Xi \to \dot H^{1/2}_{p^+}(\R_+;L_{s})
\end{equation}
and 
\begin{equation}
    T: \dot H^{1/2}_{p^-}(\R_+;L_{r})\cap \Xi \to \dot H^{1/2}_{p^-}(\R_+;L_{s})
\end{equation}
 then by the real interpolation
 \begin{equation}
   T: \dot H^{1/2}_{(p,1)}(\R_+;L_{r})\cap \Xi \to \dot H^{1/2}_{(p,1)}(\R_+;L_{s})   
 \end{equation}
We need to repeat the considerations from the proof of Lemma \ref{l:interp} for $p^+$ and $p^-$. They are slightly different, let us start with $p^+$. As $\frac{1}{q_0}=\frac{1}{q_1}+\frac{1}{q_2}$, we have
\begin{equation} \label{pp:0}
    \|Sf\|_{L_{p^+}(\R_+;L_{q_0})} \leq 
    C\|g\|_{L_1(\R_+;L_{q_2})}\|f\|_{L_{p^+}(\R_+;L_{q_1})},
\end{equation}
Recalling $g \sim \nabla v$, by \eqref{L1:2} we have
% \begin{equation} \label{gl1}
$\|g\|_{L_1(\R_+;L_{q_2})} \leq \|v\|_{\Xi},$
%\end{equation}
therefore we can rewrite \eqref{pp:0} as 
\begin{equation} \label{pp:1}
    \|Sf\|_{L_{p^+}(\R_+;L_{q_0})} \lesssim 
    C\|v\|_{\Xi}\|f\|_{L_{p^+}(\R_+;L_{q_1})}.
\end{equation}
Next, 
$$
\de_t(Sf) = B(\int_0^t g) \de_t f + fgB'(\int_0^t g),
$$
therefore 
\begin{equation} \label{pp:2}
    \|\partial_t(Sf)\|_{L_{p^+}(\R_+;L_{q_0})} \leq 
    \|g\|_{L_1(\R_+;L_{q_2})}\|\partial_t f\|_{L_{p^+}(\R_+;L_{q_1})} +
    \|f g \|_{L_{p^+}(\R_+;L_{q_0})}.
\end{equation}
Now since $t v \subset \dot W^{2,1}_{q_1,p}$ for $v \in \Xi$, one estimate the last term on the RHS of \eqref{pp:2} as follows
\begin{equation} \label{pp:3}
   \|f g \|_{L_{p^+}(\R_+;L_{q_0})} \leq
   \|t g \|_{L_{p^+}(\R;L_{q_2^-})}
   \|\frac1t f\|_{L_{\infty}(\R_+;L_{q_1^+})}
\end{equation}
for some suitable $q_2^-< q_2$ and $q_1^+ >q_1$ such that $\frac{1}{q_0}=\frac{1}{q_1^+}+\frac{1}{q_2^-}$. Note that since 
\begin{equation}\label{pm:33}
    tv \in \dot W^{2,1}_{q_1,p} \subset
    \dot W^{2-2\sigma,1-\sigma}_{q_1,p^+}, 
    %\mbox{ \ so \ } t\nabla v \in L_{p^+}(\R_+;L_{q_2^-}),
\end{equation}
for some $\sigma>0$, we have 
\begin{equation} \label{pp:4}
\|t\nabla v\|_{L_{p^+}(\R_+;L_{q_2^-})} \leq C \|tv\|_{\dot W^{2-2\sigma,1-\sigma}_{q_1,p^+}} 
\leq C \|tv\|_{\dot W^{2,1}_{q_1,p}} \leq C \|v\|_{\Xi}  
\end{equation}
Plugging \eqref{pp:3} and \eqref{pp:4} into \eqref{pp:2} we get 
\begin{equation} \label{pp:5}
    \|\partial_t(Sf)\|_{L_{p^+}(\R_+;L_{q_0})} \leq 
    \|v\|_{\Xi}\left(\|\partial_t f\|_{L_{p^+}(\R_+;L_{q_1})} + \|\frac1t f\|_{L_{\infty}(\R_+;L_{q_1^+})}
    \right).
\end{equation}
Interpolating \eqref{pp:1} and \eqref{pp:5} we get  
\begin{equation} \label{pp:final}
    \|Sf\|_{\dot H^{1/2}_{p^+}(\R_+;L_{q_0})}\leq c
    \|g\|_{L_1(\R_+;L_{q_2})} \|f\|_{\dot H^{1/2}_{p^+}(\R_+;L_{q_1})} +
    \|v\|_{\Xi} \|f\|_{ \left(  \big(L_{p^+};L_{q_1}\big);
    \big(L_\infty(\frac{dt}{t});L_{q_1^+}\big) \right)_{1/2}}.
\end{equation}
By \eqref{complex:int} we have 
\begin{equation} \label{imbed:10}
   \left(  \big(L_{p^+};L_{q_1}\big);
    \big(L_\infty(\frac{dt}{t});L_{q_1^+}\big) \right)_{1/2} =
    L_{2p^+}(\R_+,(\frac{dt}{t^{1/2}});L_{\tilde q_1^+}),
\end{equation}
where $\frac{1}{\tilde q_1^+}=\frac{1}{2 q_1}+\frac{1}{2 q_1^+}$.
Now we need to proceed for $p^-$. Instead of \eqref{pp:1} and \eqref{pp:2} we have, respectively, 
\begin{equation} \label{pm:1}
    \|Sf\|_{L_{p^-}(\R_+;L_{q_0})} \leq 
    C\|v\|_{\Xi}\|f\|_{L_{p^-}(\R_+;L_{q_1})},
\end{equation}
and
\begin{equation} \label{pm:2}
    \|\partial_t(Sf)\|_{L_{p^-}(\R_+;L_{q_0})} \lesssim 
    \|g\|_{L_1(\R_+;L_{q_2})}\|\partial_t f\|_{L_{p^-}(\R_+;L_{q_1})} +
    \|f g \|_{L_{p^-}(\R_+;L_{q_0})}.
\end{equation}
Again, the key term is $\|fg\|_{L_{p^-}(\R_+;L_{q_0})}$. 
Let us note that as
$tv \in W^{2,1}_{q_1,p}$ for $v \in \Xi$, then
for small $\sigma >0$ we have
\begin{equation} \label{pm:3}
    t^{1-\sigma} v \in \dot W^{2,1}_{q_1,p^-}.
\end{equation}
Therefore we can write 
\begin{equation} \label{pm:4}
   \|f g \|_{L_{p^-}(\R_+;L_{q_0})} \leq
   \|t^{1-\sigma} g\|_{L_{p^-}(\R;L_{q_2})}
   \|\frac{1}{t^{1-\sigma}} f\|_{L_{\infty}(\R_+;L_{q_1})}
\end{equation}
Recalling $g \sim \nabla v$ and using \eqref{pm:3} we can rewrite 
\eqref{pm:4} as 
\begin{equation} \label{pm:5}
   \|f g \|_{L_{p^-}(\R_+;L_{q_0})} \leq
   \|v\|_{\Xi}
   \|\frac{1}{t^{1-\sigma}} f\|_{L_{\infty}(\R_+;L_{q_1})}.
\end{equation}

%\vskip5mm
%
%{\color{red}{\bf To chyba niepotrzebne, wystarczy \eqref{pm:5}?} By \eqref{holder} we have
%\begin{equation}
%    \|t^{1-\sigma}f\|_{L_{p^-,1}} \leq
%    \|t^{-\sigma}\|_{L_{\frac{1}{\sigma},\infty}} \|t^{1} f\|_{L_{p,1}}
%\end{equation}
%}

\vskip5mm

Therefore we can rewrite \eqref{pm:2} as
\begin{equation} \label{pm:6}
    \|\partial_t(Sf)\|_{L_{p^-}(\R_+;L_{q_0})} \leq 
    \|v\|_{\Xi}\|\partial_t f\|_{L_{p^-}(\R_+;L_{q_1})} +
    \|v\|_{\Xi}
   \|\frac{1}{t^{1-\sigma}} f\|_{L_{\infty}(\R_+;L_{q_1})}.
\end{equation}
Interpolating \eqref{pm:1} and \eqref{pm:6} we arrive at the bound
\begin{equation} \label{pm:final}
    \|Sf\|_{\dot H^{1/2}_{p^-}(\R_+;L_{q_0})}\leq c
    \|g\|_{L_1(R_+;L_{q_2})} \|f\|_{\dot H^{1/2}_{p^-}(\R_+;L_{q_1})} +
    \|v\|_{\Xi} \|f\|_{ \left(L_{p^-};L_{q_1});
    L_\infty(\frac{dt}{t^{1-\sigma}});L_{q_1})\right)_{1/2}}.
\end{equation}
Applying again \eqref{complex:int} we get, similarly to \eqref{imbed:10}, 
\begin{equation} \label{imbed:11}
   \left(  \big(L_{p^-};L_{q_1}\big);
    \big(L_\infty(\frac{dt}{t^{1-\sigma}});L_{q_1}\big) \right)_{1/2} =
    L_{2p^-}(\R_+,(\frac{dt}{t^{(1-\sigma)/2}});L_{q_1}).
\end{equation}

Now, in order to obtain our goal
%\eqref{bdry:lorentz:1} 
it is enough to 
take the real interpolation for \eqref{pp:final} and \eqref{pm:final}. Taking into account \eqref{imbed:10} and \eqref{imbed:11} we arrive at
\begin{equation} \label{bdry:final}
\begin{aligned}
    \|Sf\|_{\dot H^{1/2}_{p,1}(\R_+;L_{q_0})} & \lesssim 
    \|g\|_{L_1(\R_+;L_{q_2})} \|f\|_{\dot H^{1/2}_{p,1}(\R_+;L_{q_1})}\\ 
    &+ \|v\|_{\Xi} \|f\|_{ L_{2p^+}(\R_+,(\frac{dt}{t^{1/2}});L_{\tilde q_1^+}) \cap L_{2p^-}(\R_+,(\frac{dt}{t^{(1-\sigma)/2}});L_{q_1}) }\\
    & \lesssim \|v\|_{\Xi} \|f\|_{\dot H^{1/2}_{p,1}(\R_+;L_{q_1})}\\ 
    &+ \|v\|_{\Xi} \|t\nabla v\|_{ L_{2p^+}(\R_+,(\frac{dt}{t^{1/2}});L_{\tilde q_1^+}) \cap L_{2p^-}(\R_+,(\frac{dt}{t^{(1-\sigma)/2}});L_{q_1}) }.  
\end{aligned}
\end{equation}
%where in the first term we have used \eqref{gl1}.
Now, it is enough to observe that (\ref{pm:33}) and (\ref{pm:3})
\begin{equation}
\|t\nabla v\|_{ L_{2p^+}(\R_+,(\frac{dt}{t^{1/2}});L_{\tilde q_1^+}) \cap L_{2p^-}(\R_+,(\frac{dt}{t^{(1-\sigma)/2}});L_{q_1}) } \lesssim \|v\|_{\Xi},
\end{equation}
which combined with \eqref{bdry:final} gives 
$$
\|Sf\|_{\dot H^{1/2}_{(p,1)}(\R_+;L_{q_0})} \lesssim  \|v\|_{\Xi} \|t\nabla v\|_{
    \dot H^{1/2}_{(p,1)}(\R_+;L_{q_1})}+ \|v\|_{\Xi}^2.
$$
By \eqref{imbed:4} we get 
$$
\|Sf\|_{\dot H^{1/2}_{(p,1)}(\R_+;L_{q_0})} \lesssim  \|v\|_{\Xi} \|t v\|_{\dot W^{2,1}_{q_1,(p,1)}} + \|v\|_{\Xi}^2.
$$
Estimating $\|Sf\|_{\dot H^{1/2}_{(p,1)}(\R_+;L_{q_1})}$ and $\|Sg\|_{\dot H^{1/2}_{(p,1)}(\R_+;L_{q_2})}$ in a similar way we obtain \eqref{bdry:lorentz}.
\end{proof}

\smallskip

\section{Proof of Theorem \ref{thm:Lorentz}}
In order to prove Theorem \ref{thm:Lorentz} it is enough to prove the estimate 
\begin{equation} \label{est:lorentz:final}
E_{L,+\infty}(\bW) \lesssim (E_{L,+\infty}(\bW))^2 + \|\bW\|_{\Xi}^2 + \left( \|\bU_0\|_{L_{q_{i-1}}} + \|\bU_0\|_{B^{2-2/p}_{q_i,1} } \right),
\end{equation}
where $E_{L,T}(\bW)$ is defined in \eqref{def:ETW:2}. 

Let us start with summarizing the nonlinear estimates obtained in the previous section. 
By \eqref{est:10} and \eqref{est:N4} we have 
\begin{equation} \label{Fq2:final}
\| \bF(v),\nabla \div\bG(v), \de_t\bG(v) \|_{L_{p,1}(\R_+;L_{q_2})} \lesssim \|v\|_{\Xi}^2 + \|v\|^2_{\dot W^{2,1}_{q_2,(p,1)}\cap L_{p,1}(\R_+;L_{q_1})} +\|tv\|^2_{\dot W^{2,1}_{q_1,(p,1)}},
\end{equation}
by \eqref{est:12} and \eqref{est:nonlin3} 
\begin{equation} \label{tFq1:final}
\| (1+t) \big( \bF(v), \nabla \div\bG(v),\nabla \bH(v), \de_t\bG(v)\big) \|_{L_{p,1}(\R_+;L_{q_1})} \lesssim \|v\|^2_{W^{2,1}_{q_2,(p,1)} \cap W^{2,1}_{q_1,(p,1)}}+\|tv\|^2_{\dot W^{2,1}_{q_1,(p,1)}}, 
\end{equation}
and, by \eqref{est:13} and \eqref{est:nonlin4}
\begin{equation} \label{tFq0:final}
\| (1+t) \big( \bF(v), \nabla \div\bG(v),\nabla \bH(v), \de_t\bG(v)\big) \|_{L_{p,1}(\R_+;L_{q_0})} \lesssim \|tv\|^2_{\dot W^{2,1}_{q_1,(p,1)}} + \|v\|_{\Xi}^2. 
\end{equation}
Next, we can replace the weight $t$ by $(1+t)$ in Lemma \ref{l:bdry:lorentz}. Then from \eqref{bdry:lorentz} we get
\begin{equation} \label{LH:final}
\begin{aligned}
&\sum_{i=0}^1 \|(1+t) \Lambda^{1/2}\bH(v)\|_{L_{p,1}(\R_+;L_{q_i})} 
+\|\Lambda^{1/2}\bH(v)\|_{L_{p,1}(\R_+;L_{q_2})}\\[3pt]
&\lesssim \|v\|_{\Xi} \left(\|t v\|_{\dot W^{2,1}_{q_1,(p,1)}}+\|v\|_{W^{2,1}_{q_1,(p,1)}}+\|v\|_{W^{2,1}_{q_2,(p,1)} }\right) + \|v\|_{\Xi}^2,
\end{aligned}
\end{equation}
We can now combine the above estimates with the Lorentz spaces estimates for the Stokes system to derive the final estimate which is crucial in proving Theorem \ref{thm:Lorentz}. We immediately focus on the more general perturbed half-space case. 
Summing up \eqref{Stokes:Lorentz:pert} for $q_1$ and $q_2$ we get 
\begin{equation}
\begin{aligned}
&\sum_{j=1}^2 \|\bW\|_{W^{2,1}_{q_j,(p,1)}}\\ 
&\lesssim 
\|\bF(\bW), \nabla \div\bG(\bW),\nabla \bH(\bW), \de_t\bG(\bW), \Lambda^{1/2}\bH\|_{L_{p,1}(\R_+;L_{q_2})}\\[3pt] 
&+\sum_{j=1}^2 \|(1+t)\big(\bF(\bW), \nabla \div\bG(\bW),\nabla \bH(\bW), \de_t\bG(\bW), \Lambda^{1/2}\bH \big)\|_{L_{p,1}(\R_+;L_{q_{j-1}})}\\[3pt]
&+ \sum_{j=1}^2 \left( \|\bU_0\|_{L_{q_{i-1}}} + \|\bU_0\|_{B^{2-2/p}_{q_i,1} } \right)\\[3pt]
&+\epsilon \Big( \|(1+t)\nabla^2 \bW\|_{L_{p,1}(\BR_+, L_{q_1}(\Omega_0))} + \|\nabla^2 \bW\|_{L_{p,1}(\BR_+, L_{q_2}(\Omega_0))} \Big).
\end{aligned}
\end{equation}
Applying \eqref{Fq2:final}-\eqref{LH:final} to the RHS we obtain  
\begin{equation} \label{nonlin:final}
\begin{aligned}
\sum_{j=1}^2 \|\bW\|_{W^{2,1}_{q_j,(p,1)}} \lesssim & 
\sum_{j=1}^2 \left( \|\bU_0\|_{L_{q_{i-1}}} + \|\bU_0\|_{B^{2-2/p}_{q_i,1} } \right)\\[3pt]
&+\sum_{j=1}^2 \|\bW\|^2_{W^{2,1}_{q_j,(p,1)}} + \|(1+t)\bW\|^2_{\dot W^{2,1}_{q_1,(p,1)}} + \|\bW\|_{\Xi}^2\\[3pt]
&+\|\bW\|_{\Xi} \left(\|t \bW\|_{\dot W^{2,1}_{q_1,(p,1)}}+\|\bW\|_{W^{2,1}_{q_1,(p,1)}}+\|\bW\|_{W^{2,1}_{q_2,(p,1)} }\right). 
\end{aligned}
\end{equation}
Next, by \eqref{weight:lorentz:pert} we have 
\begin{equation} 
\begin{aligned}
&\|(1+t)(\de_t \bW,\nabla^2 \bW)\|_{L_{p,1}(\R_+;L_{q_1})}\\[3pt]
&\lesssim \|(1+t)\big(\bF(\bW), \nabla \div\bG(\bW),\nabla \bH(\bW), \de_t\bG(\bW),\Lambda^{1/2}\bH(\bW)\big)\|_{L_{p,1}(\R_+;L_{q_1})}\\[3pt]
& +\|\bU_0\|_{\dot B^{2(1-1/p)}_{q_1,1}} +\|\bW\|_{L_{p_1}(\R_+;L_{q_1})} + \epsilon \|(1+t)(\de_t \bW,\nabla^2 \bW)\|_{L_{p,1}(\R_+;L_{q_1})}
\end{aligned}
\end{equation}
Applying \eqref{tFq1:final} to the LHS we get 
\begin{equation} \label{weight:nonlin:final}
\begin{aligned}
&\|(1+t)(\de_t \bW,\nabla^2 \bW)\|_{L_{p,1}(\R_+;L_{q_1})} \\[3pt] &\lesssim 
\|\bW\|_{L_{p_1}(\R_+;L_{q_1})} + \sum_{j=1}^2 \|\bW\|^2_{W^{2,1}_{q_j,(p,1)}} + \|(1+t)\bW\|^2_{\dot W^{2,1}_{q_1,(p,1)}}\\[3pt]
& +\|\bU_0\|_{\dot B^{2(1-1/p)}_{q_1,1}}+\|\bW\|_{\Xi} \left(\|t \bW\|_{\dot W^{2,1}_{q_1,(p,1)}}+\|\bW\|_{W^{2,1}_{q_1,(p,1)}}+\|\bW\|_{W^{2,1}_{q_2,(p,1)} }\right)
+\|\bW\|_{\Xi}^2. 
\end{aligned}
\end{equation}
Combining \eqref{nonlin:final} and \eqref{weight:nonlin:final}
we get \eqref{est:lorentz:final}.

%Similarly we obtain  
%\begin{equation} \label{est:w3}
%\begin{aligned}
%&\left\| \|t\nabla v \int_0^t \nabla v(\tau) d\tau\|_q %\right\|_{L_{p,1}(\R_+)}\\ &\lesssim 
%\|t\nabla v\|_{L_{p,1}(\R_+,L_{q_1}(\HS))}\|\nabla^2 v\|_{L_{p,1}(\R_+,L_{q}(\HS))}^{1/p} \|t\nabla^2 v\|_{L_{p,1}(\R_+,L_{q_1}(\HS))}^{1/p'}
%\end{aligned}
%\end{equation}
%and 
%\begin{equation} \label{est:w4}
%\begin{aligned}
%&\left\| \|t\nabla q \int_0^t \nabla v(\tau) d\tau\|_q \right\|_{L_{p,1}(\R_+)}\\ &\lesssim 
%\|t\nabla q\|_{L_{p,1}(\R_+,L_{q_1}(\HS))}\|\nabla^2 v\|_{L_{p,1}(\R_+,L_q(\HS))}^{1/p} \|t\nabla^2 v\|_{L_{p,1}(\R_+,L_{q_1}(\HS))}^{1/p'},
%\end{aligned}
%\end{equation}
%which implies \eqref{est:13}. 

\subsubsection*{Acknowledgement}
The first (PBM) and second authors (TP) has been partly supported by the Narodowe Centrum Nauki (NCN) Grant No. 2022/45/B/ST1/03432 (OPUS).

\appendix

\section{Recasting the system in Lagrangian coordinates}
\label{sec-A}
For the reader's convenience, we provide here how to derive
\eqref{def-nonlinear-terms}. To this end, we use the following 
well-known formulas:
\begin{alignat}2
\nabla_x & = \BA_{\bu}^\top \nabla_y, & \qquad 
\dv_x (\,\cdot\,) & = \BA_{\bu}^\top \colon \nabla_y (\,\cdot\,) 
= \dv_y (\BA_{\bu} \,\cdot \, ), \\
\bn & = \frac{\BA_{\bu}^\top \bn_0}{\lvert \BA_{\bu}^\top \bn_0 \rvert}, 
& \qquad
\nabla_x \dv_x (\, \cdot \,) & = \BA_{\bu}^\top \nabla_y \dv_y (\, \cdot \,)
+ \BA_{\bu}^\top \nabla_y ((\BA_{\bu}^\top - \BI) \colon \nabla_y \, \cdot \, ),
\end{alignat}
{\color{black}see, e.g., \cite[p. 383]{Sol88}.}
{\color{black} In fact,} as it was proved in \cite[{\color{black} p. 382}]{Sol88}, 
there holds $\det \BA_{\bu} = 1$ as follows from the divergence-free condition, 
which yields the first formula $\nabla_x = \BA_{\bu}^\top \nabla_y$.
By using these formulas, it is easy to verify the representations of
$G_\mathrm{div} (\bu)$ and $\bG (\bu)$. Hence, it suffices to derive
the representations of $\bF (\bu)$ and $\BH (\bu)$.  \par
By a direct calculation, we observe
\begin{equation}
\DV_x(\mu\BD(\bv) - P\BI) = \mu \Delta_x \bv 
+ \mu \nabla_x \dv_x \bv - \nabla_x P.
\end{equation}
We see that
\begin{align}
&\pd_t \bv + (\bv \cdot \nabla_x) \bv = \pd_t \bu, \\
&\Delta_x \bv = \dv_x\nabla_x \bv 
= \dv_y(\BA_{\bu} \BA_{\bu}^\top\nabla_y \bu)
= \dv_y((\BA_{\bu} \BA_{\bu}^\top-\BI)\nabla_y \bu) + \Delta_y \bu, \\
&\nabla_x\dv_x\bv = \BA_{\bu}^\top\nabla_y (\BA_{\bu}^\top:\nabla_y\bu)
= \BA_{\bu}^\top\nabla_y((\BA_{\bu}^\top-\BI):\nabla_y\bu)+
\BA_{\bu}^\top\nabla_y\dv_y\bu, \\
&\nabla_x P = \BA_{\bu}^\top\nabla_y \sfQ.
\end{align}
Since $\BA_{\bu}^\top$ is invertible and $(\BA_{\bu}^\top)^{-1}
= \BI + \int^t_0\nabla \bu\d\tau$, 
the equation
\eqref{eq-original}$_1$ is transformed into
\begin{align}
& \pd_t \bu - \mu \Delta_y \bu - \mu \nabla_y \dv_y \bu
+ \nabla_y \sfQ \\
& = \bigg(\int^t_0\nabla\bu\d\tau\bigg)
\Big(\pd_t \bu - \mu \Delta_y \bu\Big) 
+ \mu \bigg(\BI + \int^t_0\nabla\bu\d\tau\bigg)
\dv_y \Big( (\BA_{\bu} \BA_{\bu}^\top - \BI) \nabla_y \bu \Big)\\
&+ \mu \nabla_y \Big((\BA_{\bu}^\top - \BI) \colon \nabla_y \bu \Big).	
\end{align}
Combined with 
\begin{equation}
\DV_y (\mu \BD(\bu) - \sfQ \BI) 
= \mu \Delta_y \bu + \mu \nabla_y \dv_y \bu - \nabla_y \sfQ,
\end{equation}
we have the representation of $\bF (\bu)$. Note that $\bF (\bu)$
does not contain the pressure $\sfQ$. \par
It remains to deal with $\BH (\bu)$. It is easy to find that
\begin{equation}
\mu\BD(\bv) -  P\BI = \mu \Big(\BA_{\bu}^\top \nabla_y \bu 
+ [\nabla \bu]^\top \BA_{\bu} \Big) - \sfQ \BI.
\end{equation}
Together with the boundary condition \eqref{eq-original}$_3$, it follows that
\begin{equation}
\mu \Big(\BA_{\bu}^\top \nabla_y \bu 
+ [\nabla \bu]^\top \BA_{\bu} \Big) 
\frac{\BA_{\bu}^\top \bn_0}{\lvert \BA_{\bu}^\top \bn_0 \rvert}
- \sfQ \frac{\BA_{\bu}^\top \bn_0}{\lvert \BA_{\bu}^\top \bn_0 \rvert} = 0
\qquad \text{on $\pd \HS$}.
\end{equation}
Multiplying this equation by $\lvert \BA_{\bu}^\top \bn_0 
\rvert (\BA_{\bu}^\top)^{- 1}$ yields
\begin{equation}
\mu \Big(\nabla_y \bu + (\BA_{\bu}^\top)^{- 1} 
[\nabla \bu]^\top \BA_{\bu} \Big) 
\BA_{\bu}^\top \bn_0 - \sfQ \bn_0 = 0
\qquad \text{on $\pd \HS$}.
\end{equation}
Namely, we have
%\begin{equation}
%(\mu \BD(\bu) - \sfQ \BI) \bn_0 = 
%\bigg[\mu \BD_y (\bu) (\BI - \BA_{\bu}^\top) + \mu \Big\{ (\nabla_y \bu)^\top 
%(\BI - \BA_{\bu}) + \Big( \BI - (\BA_{\bu}^{- 1})^\top \Big) 
%(\nabla_y \bu)^\top \BA_{\bu} \Big\} \BA_{\bu}^\top\bigg] \bn_0
%\quad \text{on $\pd \HS$}.
%\end{equation}
\begin{align*}
&(\mu \BD(\bu) - \sfQ \BI) \bn_0\\
&= \mu \Big[ \Big(\nabla_y \bu + (\BA_{\bu}^\top)^{- 1}
[\nabla_y \bu]^\top \BA_{\bu} \Big) (\BI-\BA_{\bu}^\top)
+ (\BI - (\BA_{\bu}^\top)^{- 1}) [\nabla_y \bu]^\top \BA_{\bu}
+ [\nabla_y \bu]^\top (\BI-\BA_{\bu}) \Big]\bn_0.
\end{align*}
Since there holds
$(\BA_{\bu}^\top)^{- 1} = \BI + (\int_0^t \nabla_y \bu \d \tau )^\top$,
we obtain the representation of $\BH (\bu)$.

%%%%%%%%%%%%%%%%%%%%%%%%%%%%%%%%%%%%%%%%%%%%%%

\section{Spectral theory - proof of Theorem \ref{thm:max}} \label{A1}

We consider the generalized resolvent problem:
\begin{equation}\label{eq:2}\left\{\begin{aligned}
\lambda \bu - \dv\BT(\bu, \fp) = \bff&&\quad&\text{in $\HS$}, \\
\dv\bu = g = \dv \bg&&\quad&\text{in $\HS$}, \\
\BT(\bu, \fp)\bn_0 = \bh&&\quad&\text{on $\pd\HS$}.
\end{aligned}\right.\end{equation}
Let 
$$\Sigma_\epsilon = \{\lambda \in \BC\setminus\{0\} \mid |\arg\lambda|\leq \pi-\epsilon\}
\quad\epsilon \in (0, \pi/2).$$
In what follows,$\epsilon \in (0, \pi/2)$ is fixed. 
Then, from \cite{SS12} we know that there exist $\CR$-bounded solution 
operators $\CS(\lambda)$ and $\CP(\lambda)$ such that 
$$\CS(\lambda) \in {\rm Hol}\,(\Sigma_\epsilon, \CL(L_q(\HS)^m, W^2_q(\HS)^d)),
\quad
\CP(\lambda) \in {\rm Hol}\,(\Sigma_\epsilon, \CL(L_q(\HS)^m, 
W^1_q(\HS) + \hat W^1_{q,0}(\HS))
$$
satisfying the following two properties:
\begin{enumerate}
\item $\bu = \CS(\lambda)F$ and $\fp = \CP(\lambda)F$ are unique solutions 
of equations \eqref{eq:2}, where 
$$F=(\bff, \nabla g, \lambda \bg, \nabla \bh, \lambda^{1/2}\bh).$$
\item there exists an $r_b > 0$ such that 
\begin{gather*}\CR_{\CL(L_q(\HS)^m, L_q(\HS)^n)}(\{(\tau\pd_\tau)^\ell(\lambda\CS(\lambda),
 \lambda^{1/2}\nabla\CS(\lambda), \nabla^2\CS(\lambda)) \mid
\lambda \in \Sigma_\epsilon\}) \leq r_b, \\
\CR_{\CL(L_q(\HS)^m, L_q(\HS)^d)}
(\{(\tau\pd_\tau)^\ell \nabla \CP(\lambda) \mid 
\lambda \in \Sigma_\epsilon\} \leq r_b.
\end{gather*}
for $\ell=0,1$.  Here, $m$ and $n$ are suitable size of vectors. 
\end{enumerate}
In particular, we have the estimates:
\begin{equation}\label{bound:1}
\|(\lambda, \lambda^{1/2}\nabla, \nabla^2)\CS(\lambda)F\|_{L_q(\HS)} \leq C\|F\|_{L_q(\HS)}.
\end{equation}
The definition of $\CR$ boundedness will be given in Appendix below. 
\par 
To obtain solution formulas, first we consider the boundary value problem: 
\begin{equation}\label{eq:3}\left\{\begin{aligned}
\pd_t\bU - \dv\BT(\bU, P) = \bF&&\quad&\text{in $\HS\times\BR$}, \\
\dv\bU = G = \dv\bG&&\quad&\text{in $\HS\times\BR$}, \\
\BT(\bU, P) \bn_0=\bH &&\quad&\text{on $\pd\HS\times\BR$}.
\end{aligned}\right.\end{equation}
Let $1 < p, q < \infty$. 
 If $\bF$, $G$, $\bG$, $\bH$ are defined on the whole interval
$\BR$ and satisfy the following regularity conditions:
\begin{align*}
\bF &\in L_p(\BR, L_q(\HS)^d), \quad 
G \in L_p(\BR, \dW^1_q(\HS)), \quad \bG \in \dH^1_p(\BR, L_q(\HS)^d), \\
 \bH &\in L_p(\BR, \dW^1_q(\HS)^d) \cap \dH^{1/2}_p(\BR, L_q(\HS)^d)
\end{align*}
then problem \eqref{eq:3} admit unique solutions $\bU$ and $P$ such that 
\begin{gather*}
(\pd_t, \nabla^2)\bU \in L_p(\BR, L_q(\HS)^{d+d^2}), 
\quad \nabla P \in L_p(\BR, L_q(\HS)^d)
\end{gather*}
possessing the estimate:
\begin{equation}\label{fourier.multiplier}
\|(\pd_t\bU, \nabla^2\bU, \nabla P)\|_{L_p(\BR, L_q(\HS)}
\leq Cr_b\|(\bF, \nabla G, \pd_t\bG, \nabla \bH, \Lambda^{1/2}\bH)\|_{L_p(\BR, L_q(\HS))}
\end{equation}
for some constant $C$ depending solely on $p$ and $q$, which follows from 
Weis's operator valued Fourier multiplier theorem \cite{Weis}. 
Here, $\Lambda^{1/2}\bH$ is defined in \eqref{def:lambda}. 
%
%And, we know that 
%\begin{equation}\label{half}\|\Lambda^{1/2}\bH\|_{L_p(\BR, L_q(\HS))} 
%\leq C(\|\pd_t \bH\|_{L_p(\BR, \dot W^{-1}_q(\HS))} 
%+ \|\bH\|_{L_p(\BR, \dot W^1_q(\HS))}), \end{equation}
%where $\dot W^{-1}_q(\HS) = \hat W^1_{q, 0}(\HS)^*$ and 
%$$\hat W^1_{q,0}(\HS) = \{ f \in L_{q, {\rm loc}}(\HS) \mid
%\nabla f \in L_q(\HS)^d, \quad f|_{x_d=0} = 0\}.
%$$
%The inequality \eqref{half} will be proved in Appendix below. 

We next consider the initial valued problem:
\begin{equation}\label{eq:4}\left\{\begin{aligned}
\pd_t\bV - \dv\BT(\bV, Q) = 0&&\quad&\text{in $\HS\times\BR_+$}, \\
\dv\bV = 0&&\quad&\text{in $\HS\times\BR_+$}, \\
\BT(\bV, Q) \bn_0=0 &&\quad&\text{on $\pd\HS\times\BR_+$}, \\
\bV|_{t=0} = \bg&&\quad&\text{in $\HS$}.
\end{aligned}\right.\end{equation}
To formulate \eqref{eq:4} in the analytic semigroup setting, we consider
the corresponding resolvent problem:
\begin{equation}\label{eq:5}
\left\{\begin{aligned}
\lambda\bV - \dv\BT(\bV, \fq) = \bg&&\quad&\text{in $\HS\times\BR_+$}, \\
\dv\bV = 0&&\quad&\text{in $\HS\times\BR_+$}, \\
\BT(\bV, \fq) \bn_0=0 &&\quad&\text{on $\pd\HS\times\BR_+$}. 
\end{aligned}\right.\end{equation}
To eliminate the pressure term $\fq$,  we introduce
the functional $K$.  For any $\bV \in \dW^2_q(\HS)^N$, let $u = K(\bu)
\in  W^1_q(\HS) + \hat W^1_{q,0}(\HS)$ be a unique solution of the 
weak Dirichlet problem:
\begin{equation} \label{def:K}
(\nabla u, \nabla\varphi) = (\mu\dv \BD(\bu), \nabla\varphi)
\quad\text{for every $\varphi \in \hat W^1_{q', 0}(\HS)$}, \\
\end{equation}
subject to $u|_{x_d=0} = \BD(\bV)\bn_0|_{x_d=0}$. 
And also, we introduce the solenoidal space $J^q(\HS)$, which is defined by
$$J_q(\HS) = \{\bu \in L_q(\HS)^d \mid (\bu, \nabla\varphi) = 0
\quad\text{for every $\varphi \in \hat W^1_{q',0}$}\}.$$
In the half space case, we know the unique existence of solutions to the weak
Dirichlet problem:
\begin{equation}\label{wd:1}(\nabla u, \nabla\varphi) = (\bff, \nabla\varphi)
\quad\text{for every $\varphi \in \hat W^1_{q',0}(\HS)$}.
\end{equation} 
Namely, for every $\bff \in L_q(\HS)^d$, problem \eqref{wd:1}
admits  a unique solution 
$u \in \hat W^1_{q,0}(\HS)$ satisfying the estimate:
\begin{equation}\label{wd:2}
\|\nabla u\|_{L_q(\HS)} \leq C\|\bff\|_{L_q(\HS)}.
\end{equation}
By using this fact, for any $\bff \in L_q(\HS)^d$, there exists a unique $\bg
\in J_q(\HS)$ and $h \in \hat W^1_{q,0}(\HS)$ such that 
$\bff = \bg + \nabla h$ and 
$$\|\bg\|_{L_q(\HS)} + \|\nabla h\|_{L_q(\HS)} \leq C\|\bff\|_{L_q(\HS)}.$$
This is called the second Helmholtz decomposition. 
\par
Let us consider equations:
\begin{equation}\label{eq:6}
\left\{\begin{aligned}
\lambda\bv - \dv\BT(\bv, K(\bv)) = \bg&&\quad&\text{in $\HS$}, \\
\dv\bv = 0&&\quad&\text{in $\HS$}, \\
\BT(\bv, K(\bv)) \bn_0=0 &&\quad&\text{on $\pd\HS$}. 
\end{aligned}\right.\end{equation}
From the definition of $K(\bv)$, the requirement of boundary conditions are:
$$\pd_d v_j + \pd_j v_d = 0 \quad \text{on $\pd\HS$, \quad $j=1, \ldots, d-1$}.
$$
We see easily that if $\bff \in J_q(\HS)$, then $\fq = K(\bv)$, and so
instead of equations \eqref{eq:5}, we consider \eqref{eq:6}.
And then, 
\begin{equation}\label{solformula:1}
\bv = \CS(\lambda)(\bg, 0, \cdots, 0), \quad 
K(\bv) = \CP(\lambda)(\bg, 0, \cdots, 0).
\end{equation}  \par 
Moreover,  the $\CR$ boundedness implies that
\begin{equation}\label{resol:est}|\lambda|\|\bv\|_{L_q(\HS)} + \|\nabla^2\bv\|_{L_q(\HS)} 
\leq C\|\bg\|_{L_q(\HS)}.
\end{equation}
Let us define operator $\CA_q$ and its domain $\CD(\CA_q)$ by
\begin{equation}\label{domain:1}\begin{aligned}
\CD(\CA_q) &= \{\bu \in J_q(\HS) \cap \dW^2_q(\HS) \mid 
(\pd_d u_j + \pd_ju_d)|_{x_d=0}= 0 \enskip(j=1, \ldots, d-1)\}\\
\CA_q \bv & = \dv \BT(\bv, K(\bv))\quad\text{for $\bv \in \CD(\CA_q)$}.
\end{aligned}\end{equation} 
Then, by \eqref{solformula:1}, $(\lambda-\CA_q)^{-1}\bg
= \CS(\lambda)(\bg, 0, \cdots, 0)$ for $\lambda \in \Sigma_\epsilon$
and resolvent estimate \eqref{resol:est} holds for any
$\lambda \in \Sigma_\epsilon$.
Thus, by the theory of $C_0$ analytic semigroup, we know that 
$\CA_q$ generates a $C_0$ analytic semigroup $\{T(t)\}_{t\geq0}$
and there hold
\begin{equation}\label{initial:est}\begin{aligned}
\|(\pd_tT(t)\bg, \nabla^2T(t)\bg)\|_{L_q(\HS))} &\leq Ct^{-1}\|\bg\|_{L_q(\HS)}
&\quad&\text{for $\bg \in J_q(\HS)$},\\
\|(\pd_tT(t)\bg, \nabla^2T(t)\bg)\|_{L_q(\HS))} &\leq C\|\bg\|_{\dot W^2_q(\HS)}
&\quad&\text{for $\bg \in \CD(\CA_q)$}.
\end{aligned}\end{equation}

To prove these facts, following theory of semigroup we represent $T(t)\bg$.  Let 
$\Gamma = \Gamma_+ \cup \Gamma_-$ be a contour in $\BC$ defined by
$$\Gamma_\pm = \{ \lambda = re^{i\pm(\pi-\epsilon)} \mid r \in (0, \infty)\}.
$$
For more rigorous argument, we should take 
$C_\sigma =\{\lambda =\sigma e^{i\theta} \mid \theta \in (-(\pi-\epsilon), \pi-\epsilon)\}$
and we consider $\sigma\to 0$, but we omit this procedure. According to
 theory of $C_0$ analytic semigroup, from \eqref{solformula:1} we see that 
$$T(t)\bg = \begin{cases} 0 & \quad \text{for $t < 0$}, \\[0.5pc]
\dfrac{1}{2\pi}\displaystyle{\int_{\Gamma} e^{\lambda t}
\CS(\lambda)(\bg, 0,\ldots, 0)\,\d\lambda} &\quad
\text{for $t>0$}.
\end{cases}
$$
Then, by $\CS(\lambda)(\bg, 0, \cdots, 0) = (\lambda- \CA_q)^{-1}\bg$ 
and the known argument in  theory of $C^0$ analytic semigroup, we have
\eqref{initial:est}. 
Thus, by real interpolation method with the help of \eqref{initial:est}, we have 
\begin{equation}\label{initial:est*}
\|(\pd_t, \nabla^2)T(t)\bg\|_{L_p(\BR_+, L_q(\HS))} \leq C\|\bg\|_{\dot B^{2(1-1/p)}_{q,p}(\HS)}.
\end{equation}
for $1 < p < \infty$.

For $1 < p, q < \infty$, we know that $BUC(\BR_+, \dot B^{2(1-1/p)}_{q,p}(\HS))
\supset \dot W^1_p(\BR_+, L_q(\HS)) \cap L_p(\BR_+, \dot W^2_q(\HS))$, where the inclusion
is continuous. Thus, we have proved \eqref{lin:global}.

To obtain the local estimate \eqref{local.2}, let $T >0$. 
%For our nonlinear problem, we set $F=\bF(\bW)$, $G=G_{\dv}(\bW)$,$\bG = \bG(\bW)$
%and $\bH = \BH(\bW)$.  
Recalling the assumption \eqref{ic:zero}, for $f$ defined on $(0, \infty)$ with $f(0)=0$, let $e_T[f]$ be an extension of $f$ defined by
$$e_T[f]= \left \{ \begin{matrix} 0 &\quad t<0, \\
f(t) &\quad 0 < t < T, \\
f(2T-t)&\quad T < t < 2T, \\
0&\quad t>2T.
\end{matrix} \right. $$
In particular, we have
$$\pd_te_T[f]= \left \{ \begin{matrix} 0 &\quad t<0, \\
\pd_tf(t) &\quad 0 < t < T, \\
-(\pd_tf)(2T-t)&\quad T < t < 2T, \\
0&\quad t>2T.
\end{matrix} \right. $$
Thus, we have
\begin{align*}
&\|(e_T[F], \nabla e_T[G], \pd_t e_T[\bG], \Lambda^{1/2}
e_T[\bH],\nabla e_T[\bH])\|_{L_p(\BR, L_q(\HS))} \\
&\quad \leq C(\|(F, \nabla G, \pd_t \bG, \nabla \bH)\|_{L_p((0, T), L_q(\HS))}
%+ \|(\pd_t G, \pd_t \bH)\|_{L_p((0, T), \dot W^{-1}_q(\HS))}).
+ \|\bH\|_{\dot H^{1/2}_p((0, T), L_q(\HS))}.
\end{align*}
Since our problem generates a Stokes semigroup, the uniqueness holds locally in time.
Thus, we have
\begin{equation}\label{local.1}\begin{aligned}
&\|(\pd_t, \nabla^2) \bW\|_{L_p((0, T), L_q(\HS))}
\leq  C(\|\bU_0-\bU(0)\|_{B^{2(1-1/p)}_{q,p}(\HS)} \\
&\qquad +\|(F, \nabla G, \pd_t \bG, \nabla \bH)\|_{L_p((0, T), L_q(\HS))}
%+ \|(\pd_t G, \pd_t \bH)\|_{L_p((0, T), \dot W^{-1}_q(\HS))}),
+ \|\bH\|_{\dot H^{1/2}_p((0, T), L_q(\HS))},
\end{aligned}\end{equation}
where $\bU$ solves \eqref{eq:3}.
By the trace theorem, \eqref{fourier.multiplier} and \eqref{half}
we have
\begin{align*}
\|\bU(0)\|_{B^{2(1-1/p)}_{q,p}(\HS)} 
&\leq C(\|\bU\|_{L_p(\BR_+, W^2_q(\HS)} + \|\pd_t\bU\|_{L_p(\BR_+, L_q(\HS))}) \\
&\leq C\|(\bF, \nabla G, \pd_t \bG, \nabla \bH, \Lambda^{1/2}\bH)\|_{L_p(\BR, L_q(\HS))}\\
&\leq C\|(\bF, \nabla G, \pd_t \bG, \nabla \bH\|_{L_p(\BR, L_q(\HS))} + \|\bH\|_{\dot H^{1/2}_p((0, T), L_q(\HS))}.
\end{align*}
Thus, from \eqref{local.1} we can localize the estimate in time and obtain \eqref{local.2}. The proof of Theorem \ref{thm:max} is complete.

\end{document}